\providecommand{\U}[1]{\protect \rule{.1in}{.1in}}
\newtheorem{theorem}{Theorem}
\newtheorem{corollary}[theorem]{Corollary}
\newtheorem{definition}[theorem]{Definition}
\newtheorem{lemma}[theorem]{Lemma}
\newtheorem{proposition}[theorem]{Proposition}
\newtheorem{remark}[theorem]{Remark}
\begin{document}

\title{Variations of projectivity for $C^*$-algebras}
\author{Don Hadwin}
\address{University of New Hampshire}
\email{operatorguy@gmail.com}
\author{Tatiana Shulman}
\address{Institute of Mathematics of the Polish Academy of Sciences, Poland }
\email{tshulman@impan.pl}
\subjclass[2010]{46L05}
\keywords{Projective $C^*$-algebras, RFD $C^*$-algebras,  almost commuting matrices, tracial ultraproducts, order zero maps}
\maketitle

\begin{abstract} We consider various lifting problems for $C^*$-algebras. As an application of our results we show that any commuting family of
order zero maps from matrices to a von Neumann central sequence algebra can be lifted to a commuting family of order zero maps to the $C^*$-central sequence algebra.
\end{abstract}

\bigskip

\section{Introduction}
Many important properties of $C^*$-algebras are formulated in terms of liftings.

By a lifting property we mean the following. Suppose we are given a surjective $\ast$-homomorphism   $\mathcal B  \twoheadrightarrow \mathcal M$. We will say that a C*-algebra $\mathcal A$ has the lifting property corresponding to this surjection if for any $\ast$-homomorphism $\phi: \mathcal A \to \mathcal M$ there is a $\ast$-homomorphism $\psi: \mathcal A \to \mathcal B$ such that the following diagram commutes.

 $$\xymatrix { & \mathcal B \ar@{->>}[d]^-{}  \\
\mathcal A \ar[r]_-{\phi}  \ar@{-->}[ur]^{\psi} & \mathcal M} $$

 In other words any $\ast$-homomorphism from $\mathcal A$  to the $C^*$-algebra $\mathcal M$ "downstairs" "lifts" to a $\ast$-homomorphism to the $C^*$-algebra $\mathcal B$ "upstairs".

$C^*$-algebras which have the lifting property with respect to any surjection are called {\it projective}, they were introduced by B. Blackadar in \cite{Bl}.

Many problems that arise  in C*-algebras reduce to the question of the existence of liftings in various special situations.
Here are some examples:

1) Problems about approximation of almost commuting matrices by commuting ones and, more generally, matricial weak semiprojectivity for $C^*$-algebras (\cite{LoringBook}, \cite{Lin}, \cite{FR}, \cite{ELP}), is expressed as the lifting property corresponding to the surjection $\prod_{n\in \mathbb N} M_{n} \twoheadrightarrow \prod_{n\in \mathbb N} M_{n}/\oplus_{n\in \mathbb N}M_n$ (here $M_n$ is the $C^*$-algebra of all n-by-n matrices).

2) Stability of $C^*$-algebraic relations under small Hilbert-Schmidt perturbations in matrices  is expressed as the lifting property corresponding to the surjection $\prod_{n\in \mathbb N} M_n \twoheadrightarrow \prod_{n\in \mathbb N}^{\alpha} (M_n, tr_n)$ (here $\alpha$ is a non-trivial ultrafilter on $\mathbb N$ and the $C^*$-algebra $\prod_{n\in \mathbb N}^{\alpha} (M_n, tr_n)$ "downstairs" is the tracial ultraproduct of the matrix algebras) \cite{TracialStability}. Stability under small tracial perturbations in $II_1$-factors is expressed as the lifting property corresponding to the surjection $\prod_{n\in \mathbb N} N_n \twoheadrightarrow \prod_{n\in \mathbb N}^{\alpha} (N_n, \tau_n)$ (here $N_n$ is a $II_1$-factor with a faithful trace $\tau_n$) (\cite{TracialStability}).  Similar problems for groups are discussed in \cite{DT2} and \cite{AP}.

3) The property of a $C^*$-algebra to be residually finite-dimensional (RFD)  was proved in \cite{DonRFD}  to be the lifting property corresponding to the surjection $\mathcal B  \twoheadrightarrow B(H)$, where  $\mathcal B \subseteq \prod M_n$ is defined as the $C^*$-algebra of all $\ast$-strongly convergent sequences of matrices and the surjection $\mathcal B  \twoheadrightarrow B(H)$ is defined by sending each sequence to its $\ast$-strong limit. Here we identify $M_n$ with $B(l^2\{1, \ldots, n\}) $ naturally included in $ B(l^2\{\mathbb N\}) = B(H)$.

4) The famous Brown-Douglas-Fillmore theory deals with lifting of
injective $\ast$-homomorphisms from $C(X)$ to the Calkin algebra $\mathcal{C}(H)$ with respect to the surjection $B(H)\twoheadrightarrow   \mathcal{C}(H)$.

5) In the classification program for $C^*$-algebras one sometimes has to deal with liftings of $\ast$-homomorphisms to a von Neumann central sequence algebra $N^{\omega}\cap
N^{\prime}$    to $\ast$-homomorphisms to the $C^*$-central sequence algebra $A_{\omega}\cap A^{\prime}$ (see for instance \cite{TWW}). More details on this and on the surjection $A_{\omega}\cap A^{\prime} \twoheadrightarrow N^{\omega}\cap
N^{\prime}$ are given in section 3.

We see that in the examples above the corresponding surjections sometimes have a von Neumann algebra "upstairs", sometimes "downstairs", sometimes at both places. This leads us to introducing the following more general notions.

 We say that a $C^*$-algebra $\it A$ is {\it $C^*$-$W^*$-projective} if it has the lifting property with respect to any surjection $\mathcal B  \twoheadrightarrow \mathcal M$ with $\mathcal M$ being a von Neumann algebra; in a similar way  {\it $W^*$-$C^*$-projectivity} and {\it $W^*$-$W^*$-projectivity} are defined.
In this terminology the usual projectivity may be called $C^*$-$C^*$-projectivity.

Dealing with specific lifting problems, one has to look at liftability of projections, isometries, matrix units,  various commutational relations, etc. So it is natural to explore whether and  which of those basic relations have more general property of being $C^*$-$W^*$, $W^*$-$W^*$, $W^*$-$C^*$-projective, and  we do it in this paper. Main focus is given to commutational relations, that is to the $C^*$-$W^*$, $W^*$-$W^*$ and  $W^*$-$C^*$-projectivity of commutative $C^*$-algebras, but we consider basic non-commutative relations here as well. Note that for the usual projectivity a characterization of when a separable commutative $C^*$-algebra is projective is obtained in \cite{ChDr} and is the following: $C(K)$ is projective if and only if $K$ is a compact absolute retract of covering dimension not larger than 1.

In section 2 we give necessary definitions and discuss a relation between unital and non-unital cases.

In section 3 we study $C^*$-$W^*$-projectivity. The main result of the section is a characterization of when a separable unital commutative $C^*$-algebra is $C^*$-$W^*$-projective: $C(K)$ is $C^*$-$W^*$-projective if and only if $K$ is connected and locally path-connected  (Theorem  \ref{C*-W*Comm}).
Thus for commutative $C^*$-algebras $C^*$-$W^*$-projectivity is very different from the usual projectivity.
We also give restrictions on a $C^*$-algebra to be $C^*$-$W^*$-projective, namely  it has to be RFD and cannot have non-trivial projections (Propositions \ref{NoProjections} and \ref{RFD}); furthermore we prove that tensoring a separable non-unital commutative $C^*$-$W^*$-projective $C^*$-algebra with matrices preserves $C^*$-$W^*$-projectivity (Theorem \ref{MatricesOverCommAlgebras}).  These results are applied to certain lifting problems for order zero maps (completely positive maps preserving orthogonality).
A commonly used tool in classification of $C^{*}$-algebras is the fact that an
order zero map from the matrix algebra $M_{n}$ to any quotient $C^{*}$-algebra
lifts (the so-called projectivity of order zero maps). In particular a
possibility to lift an order zero map from $M_{n}$ to a von Neumann central sequence algebra $N^{\omega}\cap N^{\prime}$ to
an order zero map to the $C^*$-central sequence algebra $A_{\omega}\cap A^{\prime}$ is a key ingredient to
obtain uniformly tracially large order zero maps (\cite{TWW}). As an application of our results we prove
a stronger statement: one can lift any commuting family of order zero maps
$M_{n} \to N^{\omega}\cap N^{\prime}$ to a commuting family of order zero maps $M_{n}\to A_{\omega}\cap A^{\prime}$ (Theorem \ref{OrderZero}).

In section 4 we study $W^*$-$C^*$-projectivity. This seems to be the most
intractable case. We don't have a characterization  of $W^*$-$C^*$-projectivity for commutative $C^*$-algebras, we only have a sufficient condition (Corollary \ref{totallydisconnected}) and, in case when the spectrum is a Peano continuum, a necessary condition (Proposition \ref{Peano}). We prove basic non-commutative results such as lifting projections and partial isometries,  consider  $W^*$-$C^*$-projectivity of matrix algebras, Toeplitz algebra, Cuntz algebras and discuss a relation with extension groups Ext.  Techniques developed in this section are applied in section 5.

In section 5 we study $W^*$-$W^*$-projectivity. The main result here is that all separable subhomogeneous $C^*$-algebras are $W^*$-$W^*$-projective (Theorem \ref{subhomogeneous}). In particular all separable commutative $C^*$-algebras are $W^*$-$W^*$-projective. We discuss also a relation between $W^*$-$W^*$-projectivity and property RFD. It is easy to show that if a $C^*$-algebra $\mathcal{A}$ is  separable nuclear $W^*$-$W^*$- projective and has a faithful trace, then it must be RFD. Moreover if Connes' embedding problem has an affirmative answer, then every unital
$W^*$-$W^*$-projective C*-algebra with a faithful trace is RFD. The converse to this statement is not true. Indeed in
\cite{TracialStability} we constructed a nuclear  $C^{*}$-algebra which is RFD (hence has a faithful trace) but is
not matricially tracially stable (that is not stable under small Hilbert-Schmidt perturbations in matrices) and hence is not $W^*$-$W^*$-projective.
In this paper we give an example which is not only nuclear but even AF (Theorem \ref{AF}). Our arguments
of why it is not matricially tracially stable are much simpler than the ones
in \cite{TracialStability}.

\bigskip
{\bf Acknowledgements} The first author was supported by a
Collaboration Grant from the Simons Foundation. The research of the
second author was supported by the Polish National Science Centre grant
under the contract number DEC- 2012/06/A/ST1/00256, by the grant H2020-MSCA-RISE-2015-691246-QUANTUM DYNAMICS" and Polish Government grant 3542/H2020/2016/2, and from the Eric Nordgren
Research Fellowship Fund at the University of New Hampshire.

\section{Definitions}

\begin{definition}
Suppose $\mathcal{X}$ and $\mathcal{Y}$ are classes of unital $C^*$-algebras that are
closed under isomorphism. We say that a unital $C^*$-algebra $\mathcal{A}$ is $\mathcal{X}%
$\emph{-}$\mathcal{Y}$\emph{ projective}  if, for every $\mathcal{B}\in \mathcal{X}$,  $\mathcal{M}%
\in \mathcal{Y}$ and unital surjective $\ast$-homomorphisms $\pi:\mathcal{B}%
\rightarrow \mathcal{M}$ and  every unital $\ast$-homomorphism  $\phi:\mathcal{A}\rightarrow \mathcal{M}$, there is a unital $\ast$-homomorphism  $\psi:\mathcal{A}\rightarrow \mathcal{B}$
such that $\pi\circ \psi = \phi.$
\end{definition}

$$\xymatrix { & \mathcal B \ar@{->>}[d]^-{\pi}  \\
\mathcal A \ar[r]_-{\phi}  \ar@{-->}[ur]^{\psi} & \mathcal M} $$

The same conditions with all the words "unital" taken away define $\mathcal{X}%
$\emph{-}$\mathcal{Y}$\emph{ projectivity in the non-unital category}.

 We use the term \emph{$C^*$-$W^*$-projective} when $\mathcal{X}$ is the class of all unital $C^*$-algebras and
$\mathcal{Y}$ is the class of all von Neumann algebras. We use the term \emph{$C^*$-$W^*$-projective in the non-unital category} when $\mathcal{X}$ is the class of all $C^*$-algebras and
$\mathcal{Y}$ is the class of all von Neumann algebras.

The terms
 \emph{W*}-\emph{C*}\emph{-projective(in the non-unital category)},  \emph{W*-W*-projective (in the non-unital category)}, \emph{C*-C*-projective (in the non-unital category)},
 are defined similarly. The usual notion of projectivity defined by B. Blackadar \cite{Bl} is the $C^*$-$C^*$-projectivity in the non-unital category.

  The term $RR0$\emph{-projectivity} is used when
$\mathcal{X}=\mathcal{Y}$ is the class of unital real rank zero $C^*$-algebras.

 Thus  in the introduction in the formulation of some of our results  we in fact should of added "in the non-unital category", which we did not do to not confuse the readers too much.

We will work mostly with the unital category, but with some exceptions. Namely in section 3 dealing with order zero maps one has to consider the non-unital case, and in sections 4 and 5
proving stability of the class of $W^*$-$W^*$ and $W^*$-$C^*$-projective $C^*$-algebras under tensoring with matrices and taking direct sums, one has to deal with the non-unital category.

In fact the relation between the unital and non-unital cases is simple. For a $C^*$-algebra $\mathcal A$, let $\tilde{ \mathcal A} = \mathcal A^+$ if $\mathcal A$ is non-unital and  $\tilde{ \mathcal A} = \mathcal A \oplus \mathbb C$ if $\mathcal A$ is unital.

\begin{proposition} Let $\mathcal A$ be a $C^*$-algebra. Then $\mathcal A$ is W*-C*-projective in the non-unital category (W*-W*, C*-W*, C*-C*-projective in the non-unital category respectively) if and only if
$\tilde{\mathcal A}$ is W*-C*-projective (W*-W*, C*-W*, C*-C*-projective respectively).
\end{proposition}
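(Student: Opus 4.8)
The plan is to prove the two implications separately, exploiting the standard fact that a unital $*$-homomorphism out of $\mathcal{A}^+$ (or out of $\mathcal{A}\oplus\mathbb{C}$) is essentially the same data as a not-necessarily-unital $*$-homomorphism out of $\mathcal{A}$ together with a choice of complementary unit. Throughout, fix a surjection $\pi:\mathcal{B}\twoheadrightarrow\mathcal{M}$ in whichever of the four categories ($W^*$-$C^*$, $W^*$-$W^*$, $C^*$-$W^*$, $C^*$-$C^*$) is under consideration; the argument is uniform in the category, since the only structural facts used are that $\mathcal{B}$ and $\mathcal{M}$ are $C^*$- or von Neumann algebras as dictated by $\mathcal{X}$ and $\mathcal{Y}$, and these properties pass to unitizations and are inherited by the corner algebras we produce.

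First I would treat the case $\mathcal{A}$ non-unital, so $\tilde{\mathcal{A}}=\mathcal{A}^+$. Suppose $\mathcal{A}$ is $\mathcal{X}$-$\mathcal{Y}$-projective in the non-unital category, and let $\phi:\mathcal{A}^+\to\mathcal{M}$ be a unital $*$-homomorphism with $\pi:\mathcal{B}\twoheadrightarrow\mathcal{M}$ unital surjective. Restrict $\phi$ to $\mathcal{A}$ to get a (non-unital) $*$-homomorphism $\phi_0:\mathcal{A}\to\mathcal{M}$; by hypothesis there is a $*$-homomorphism $\psi_0:\mathcal{A}\to\mathcal{B}$ with $\pi\circ\psi_0=\phi_0$. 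Now extend $\psi_0$ to $\mathcal{A}^+$ by sending the adjoined unit to $1_{\mathcal{B}}$; call this $\psi$. It is a unital $*$-homomorphism and $\pi\circ\psi=\phi$ since both sides are unital and agree on $\mathcal{A}$. Conversely, if $\mathcal{A}^+$ is $\mathcal{X}$-$\mathcal{Y}$-projective (unital category), and $\phi_0:\mathcal{A}\to\mathcal{M}$ is a non-unital $*$-homomorphism into $\mathcal{M}\in\mathcal{Y}$ with $\pi:\mathcal{B}\twoheadrightarrow\mathcal{M}$ surjective, set $p=\phi_0(1)$... but $\mathcal{A}$ has no unit, so instead let $q=\overline{\phi_0(\mathcal{A})}$'s unit if it exists — cleaner: extend $\phi_0$ to a unital map $\phi:\mathcal{A}^+\to\mathcal{M}^+$ and apply unital projectivity with respect to the unital surjection $\pi^+:\mathcal{B}^+\twoheadrightarrow\mathcal{M}^+$ (note $\mathcal{B}^+\in\mathcal{X}$ and $\mathcal{M}^+\in\mathcal{Y}$ since $\mathcal{X},\mathcal{Y}$ are closed under unitization within their categories); this yields unital $\psi:\mathcal{A}^+\to\mathcal{B}^+$ lifting $\phi$, and restricting $\psi$ to $\mathcal{A}$ and composing with the conditional expectation $\mathcal{B}^+\to\mathcal{B}$ (killing the adjoined scalar) gives the desired non-unital lift $\psi_0:\mathcal{A}\to\mathcal{B}$ of $\phi_0$. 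One checks $\psi_0(\mathcal{A})\subseteq\mathcal{B}$ because $\psi_0$ kills nothing that $\phi_0$ doesn't already see in $\mathcal{M}\subseteq\mathcal{M}^+$.

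Next I would handle the case $\mathcal{A}$ unital, so $\tilde{\mathcal{A}}=\mathcal{A}\oplus\mathbb{C}$. The point is that $(\mathcal{A}\oplus\mathbb{C})^+\cong\mathcal{A}\oplus\mathbb{C}\oplus\mathbb{C}$... no — rather, the adjoined-unit construction for a unital algebra $\mathcal{A}$ produces $\mathcal{A}^+\cong\mathcal{A}\oplus\mathbb{C}$ via $a+\lambda 1^+\mapsto(a+\lambda 1_{\mathcal{A}},\lambda)$, which is exactly $\tilde{\mathcal{A}}$. So the unital category statement for $\tilde{\mathcal{A}}=\mathcal{A}\oplus\mathbb{C}=\mathcal{A}^+$ is literally the unital instance of the same bijection used above: a unital $*$-homomorphism $\phi:\mathcal{A}\oplus\mathbb{C}\to\mathcal{M}$ corresponds to a pair of orthogonal projections $p=\phi(1_{\mathcal{A}}\oplus 0)$, $1-p=\phi(0\oplus 1)$ together with a unital $*$-homomorphism $\mathcal{A}\to p\mathcal{M}p$. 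Given the surjection $\pi:\mathcal{B}\twoheadrightarrow\mathcal{M}$ and such $\phi$, one uses the non-unital projectivity of $\mathcal{A}$ (viewing $\mathcal{A}\to p\mathcal{M}p\hookrightarrow\mathcal{M}$ as a non-unital map into $\mathcal{M}$, which lives in $\mathcal{Y}$) to produce $\psi_0:\mathcal{A}\to\mathcal{B}$ lifting it; the subtlety is that $\psi_0(1_{\mathcal{A}})$ is a projection $P\in\mathcal{B}$ with $\pi(P)=p$, and one must assemble $P$ and $1-P$ into a unital map $\mathcal{A}\oplus\mathbb{C}\to\mathcal{B}$. The reverse direction factors a non-unital map $\mathcal{A}$ (unital as an algebra but the homomorphism need not preserve the unit) through the summand.

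The main obstacle, and the only place the argument is not completely formal, is the bookkeeping with projections in the direction "$\tilde{\mathcal{A}}$ projective $\Rightarrow\mathcal{A}$ projective": when we lift a non-unital map $\phi_0:\mathcal{A}\to\mathcal{M}$ we obtain, after applying unital projectivity of $\tilde{\mathcal{A}}$, a lift $\psi:\tilde{\mathcal{A}}\to\mathcal{B}$ (or $\mathcal{B}^+$), and we must verify that restricting along $\mathcal{A}\hookrightarrow\tilde{\mathcal{A}}$ and (in the non-unital-$\mathcal{A}$ case) projecting $\mathcal{B}^+\to\mathcal{B}$ genuinely lands inside $\mathcal{B}$ and still lifts $\phi_0$ — i.e., that no "scalar part" leaks in. This is where one uses that $\phi_0(\mathcal{A})\subseteq\mathcal{M}\subseteq\mathcal{M}^+$ has trivial scalar component, so by commutativity of the lifting square the scalar component of $\psi(a)$ for $a\in\mathcal{A}$ lies in $\ker\pi^+\cap\mathbb{C}1=0$, hence vanishes. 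Everything else — that $\mathcal{X}$ and $\mathcal{Y}$, being closed under isomorphism and (as classes of all $C^*$- resp.\ von Neumann algebras) under unitization, contain the unitizations $\mathcal{B}^+$, $\mathcal{M}^+$; that corners $p\mathcal{M}p$ of von Neumann algebras are von Neumann algebras; that unital/non-unital $*$-homomorphisms out of $\mathcal{A}^+$ correspond bijectively to $*$-homomorphisms out of $\mathcal{A}$ — is standard and I would state it in a sentence rather than belabour it.
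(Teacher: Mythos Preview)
The paper states this proposition without proof, treating it as a routine observation; there is nothing to compare against. Your argument is correct in substance and supplies what the authors omit. Two minor points of exposition: first, the case split into ``$\mathcal{A}$ non-unital'' versus ``$\mathcal{A}$ unital'' is unnecessary once you have observed (as you do) that the forced unitization $\mathcal{A}^{+}$ is canonically isomorphic to $\tilde{\mathcal{A}}$ in either case, so both directions can be run uniformly via the functor $(-)^{+}$ and its effect on surjections. Second, in the reverse direction the phrase ``composing with the conditional expectation $\mathcal{B}^{+}\to\mathcal{B}$'' is misleading, since that map is not multiplicative and would not produce a $\ast$-homomorphism; the correct point---which you do give in your final paragraph---is that for $a\in\mathcal{A}$ the scalar component of $\psi(a)\in\mathcal{B}^{+}$ already vanishes, because $\pi^{+}(\psi(a))=(\phi_{0}(a),0)$ forces it, so $\psi|_{\mathcal{A}}$ lands in $\mathcal{B}$ automatically. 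With those edits the proof is clean; the check that $\mathcal{B}^{+}$ and $\mathcal{M}^{+}$ stay in the relevant classes (von Neumann algebras being unital, so $\mathcal{N}^{+}\cong\mathcal{N}\oplus\mathbb{C}$ is again von Neumann) is exactly as you indicate.
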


\section{$C^*$-$W^*$-projectivity}

The following result puts a severe restriction on being C*-W* projective. In
particular, if $C\left(  K\right)  $ is C*-W* projective, then $K$ must be connected.

\begin{proposition}
\label{NoProjections} Let $\mathcal{A}$ be a unital $C^{*}$-algebra. If
$\mathcal{A}$ is C*-W* projective, then $\mathcal{A}$ is $\ast$-isomorphic to
a unital C*-subalgebra of the unitization of the cone of $\mathcal{A}^{**}$.
In particular, $\mathcal{A}$ has no non-trivial projections.
\end{proposition}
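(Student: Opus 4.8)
The plan is to derive both statements from C*-W* projectivity applied to a single, carefully chosen surjection onto the von Neumann algebra $\mathcal{A}^{**}$. Let $C\mathcal{A}^{**}=C_0((0,1])\otimes\mathcal{A}^{**}$ be the cone and $\mathcal{B}=(C\mathcal{A}^{**})^{+}$ its unitization; I write elements of $\mathcal{B}$ as pairs $(g,\lambda)$, where $g\colon[0,1]\to\mathcal{A}^{**}$ is continuous with $g(0)=0$ and $\lambda\in\mathbb{C}$. Evaluation at the endpoint $1$ extends to a $\ast$-homomorphism $\pi\colon\mathcal{B}\to\mathcal{A}^{**}$, $(g,\lambda)\mapsto g(1)+\lambda 1$; it is unital (it sends $1_{\mathcal{B}}$ to $1_{\mathcal{A}^{**}}$) and surjective (given $a\in\mathcal{A}^{**}$, the path $t\mapsto ta$ lies in $C\mathcal{A}^{**}$ and evaluates to $a$ at $t=1$). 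Since $\mathcal{A}^{**}$ is a von Neumann algebra and the canonical inclusion $\iota\colon\mathcal{A}\hookrightarrow\mathcal{A}^{**}$ is a unital $\ast$-homomorphism, C*-W* projectivity of $\mathcal{A}$ yields a unital $\ast$-homomorphism $\psi\colon\mathcal{A}\to\mathcal{B}$ with $\pi\circ\psi=\iota$.

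First I would observe that $\psi$ is injective, since $\psi(a)=0$ forces $a=\iota(a)=\pi(\psi(a))=0$ as $\iota$ is faithful. Thus $\psi$ identifies $\mathcal{A}$ with a unital $C^*$-subalgebra of $\mathcal{B}=(C\mathcal{A}^{**})^{+}$, which is exactly the first assertion.

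For the statement about projections, I would prove the general fact that, for any $C^*$-algebra $B$, the unitized cone $(CB)^{+}$ has no projections besides $0$ and $1$; combined with injectivity of the unital map $\psi$, this forces every projection of $\mathcal{A}$ to be $0$ or $1$. Given a projection $(g,\lambda)\in(CB)^{+}$, self-adjointness and idempotency give $\lambda=\bar\lambda$, $\lambda^{2}=\lambda$, hence $\lambda\in\{0,1\}$, together with $g^{2}+2\lambda g=g$. When $\lambda=0$ this says $g(t)$ is a projection for every $t$; when $\lambda=1$, the element $h=-g$ satisfies $h^{2}=h=h^{*}$, so again $h(t)$ is a projection for every $t$. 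In both cases I have a norm-continuous path of projections vanishing at $t=0$; since a projection has norm in $\{0,1\}$ and $t\mapsto\|g(t)\|$ (resp. $\|h(t)\|$) is continuous on the connected interval $[0,1]$ and equals $0$ at $t=0$, it is identically $0$, so $g\equiv 0$ and $(g,\lambda)$ equals $0$ or $1_{(CB)^{+}}$.

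The only genuinely delicate point is the setup: one must take the bidual $\mathcal{A}^{**}$ as the target (so that it is a von Neumann algebra into which $\mathcal{A}$ embeds unitally and faithfully) and check that evaluation at $1$ on the unitized cone is a unital surjection. Once the right surjection is in hand, the no-projections conclusion is the elementary observation that cones are projection-free, and it is precisely here that connectedness of $[0,1]$ enters — in agreement with the remark preceding the proposition that $C(K)$ being C*-W* projective forces $K$ to be connected.
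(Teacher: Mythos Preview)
Your proof is correct and follows essentially the same approach as the paper: lift the canonical embedding $\mathcal{A}\hookrightarrow\mathcal{A}^{**}$ through the evaluation-at-$1$ surjection from the unitized cone of $\mathcal{A}^{**}$, deduce injectivity of the lift from faithfulness of the embedding, and conclude by the fact that unitized cones are projectionless. The only difference is that you spell out details the paper leaves implicit (surjectivity of the evaluation map and the argument that $(CB)^+$ has no nontrivial projections).
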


\begin{proof}
Let $\mathcal{B}$ be the unitization of the cone of $\mathcal{A}^{**}$. We
know that there is a unital $\ast$-homomorphism $\pi:\mathcal{B\rightarrow
A}^{**}$ and there is a faithful unital $\ast$-homomorphism $\rho
:\mathcal{A}\rightarrow \mathcal{A}^{**}$. If $\mathcal{A}$ is C*-W*
projective, then there must be a unital $\ast$-homomorphism $\tau
:\mathcal{A}\rightarrow \mathcal{B}$ such that $\rho=\pi \circ \tau$. Since
$\rho$ is faithful, $\tau$ is an embedding. However, $\mathcal{B}$ has no
nontrivial projections, so $\mathcal{A}$ has no nontrivial projections.
\end{proof}

\begin{proposition}\label{RFD}
Let $\mathcal{A}$ be a separable $C^{*}$-algebra. If $\mathcal{A}$ is
$C^{*}-W^{*}$-projective in either unital or non-unital category, then
$\mathcal{A}$ is RFD.
\end{proposition}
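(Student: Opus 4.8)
The plan is to exhibit, for every separable $C^*$-$W^*$-projective $\mathcal{A}$, enough finite-dimensional representations to separate points. The natural von Neumann algebra to lift against is an ultrapower or a direct product of matrix algebras, modulo the compacts, realised as a von Neumann algebra downstairs. The key observation is that $\mathcal{M} := \prod_{n} M_n$ is already a von Neumann algebra, and so is the tracial-ultraproduct-type quotient, but the most convenient target is the Calkin-like von Neumann algebra $\prod_n M_n / c_0\text{-sum}$ — this is a von Neumann algebra (it is a corner of $\prod_n M_n$'s bidual-type construction). Actually, the cleanest route: take $\mathcal{B} = \ell^\infty\text{-}\bigoplus_n M_n = \prod_n M_n$, which is a von Neumann algebra; but we need a \emph{surjection} onto a von Neumann algebra $\mathcal{M}$ from some $C^*$-algebra $\mathcal{B}$, where a given representation of $\mathcal{A}$ factors through $\mathcal{M}$ but not obviously through any RFD algebra, and then use projectivity to pull it back.

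Here is the concrete construction I would carry out. Let $\rho : \mathcal{A} \to B(H)$ be a faithful representation with $H$ separable; this exists since $\mathcal{A}$ is separable. Let $\mathcal{M} = B(H)$, which is a von Neumann algebra, and let $\mathcal{B} \subseteq \prod_n M_n$ be the $C^*$-algebra of $\ast$-strongly convergent sequences of matrices (identifying $M_n$ with a corner of $B(H)$ via a fixed increasing sequence of finite-rank projections $p_n \nearrow 1$), with $\pi : \mathcal{B} \twoheadrightarrow B(H)$ sending a sequence to its $\ast$-strong limit — this is exactly the surjection appearing in item (3) of the introduction. By $C^*$-$W^*$-projectivity of $\mathcal{A}$ applied to $\pi : \mathcal{B} \twoheadrightarrow \mathcal{M} = B(H)$ and $\phi = \rho$, there is a $\ast$-homomorphism $\psi : \mathcal{A} \to \mathcal{B} \subseteq \prod_n M_n$ with $\pi \circ \psi = \rho$. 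Composing $\psi$ with the coordinate projections $\mathcal{B} \to M_n$ gives finite-dimensional representations $\psi_n : \mathcal{A} \to M_n$. For $a \in \mathcal{A}$ with $a \neq 0$ we have $\rho(a) \neq 0$, hence $\|\rho(a)\| = \lim_n \|\psi_n(a)\|$ cannot be zero (the norm is lower semicontinuous under $\ast$-strong convergence, or more simply $\psi$ is a $\ast$-homomorphism so $\|\psi(a)\| \geq \|\pi(\psi(a))\| = \|\rho(a)\| > 0$, and $\|\psi(a)\| = \sup_n \|\psi_n(a)\|$). Therefore $\bigoplus_n \psi_n$ is a faithful family of finite-dimensional representations, so $\mathcal{A}$ is RFD. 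In the non-unital category one runs the identical argument, or alternatively invokes Proposition~4 (the preceding proposition) to pass to $\tilde{\mathcal{A}}$, noting that a $C^*$-algebra is RFD iff its unitization is.

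The one point that needs a little care — and which I expect to be the only mild obstacle — is the identification of $\mathcal{M}$ as literally a von Neumann algebra and the verification that $\mathcal{B} \twoheadrightarrow \mathcal{M}$ is the surjection we want: we are using $\mathcal{M} = B(H)$ itself, which is manifestly a von Neumann algebra, so this is immediate, and the surjectivity of $\pi$ onto $B(H)$ is exactly the content of \cite{DonRFD} recalled in the introduction. A second small point is that $\psi$ lands in $\mathcal{B}$ and hence each $\psi_n$ is genuinely finite-dimensional (a representation into $M_n$), and that faithfulness of $\bigoplus_n \psi_n$ follows because $\psi$ itself is isometric on the nose: $\psi$ is injective since $\pi \circ \psi = \rho$ is injective, and an injective $\ast$-homomorphism between $C^*$-algebras is isometric, giving $\sup_n \|\psi_n(a)\| = \|\psi(a)\| = \|a\|$. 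Thus separating points is automatic and no approximation argument is even needed.
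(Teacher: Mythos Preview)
Your argument is correct and follows essentially the same route as the paper: both use the surjection from the $C^*$-algebra $\mathcal{B}$ of $\ast$-strongly convergent sequences in $\prod_n M_n$ onto $B(H)$, observe that the target is a von Neumann algebra, and apply $C^*$-$W^*$-projectivity. The paper simply quotes the lifting characterization of RFD from \cite{DonRFD} as a black box, whereas you spell out the easy direction of that characterization explicitly (lift a single faithful $\rho$ and read off a separating family $(\psi_n)$); the content is the same.
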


\begin{proof} Let $H = l^2(\mathbb N)$. We will identify the algebra $M_n$ of $n$-by-$n$ matrices with $$B(l^2\{1, \ldots, n\}) \subseteq B(H).$$ Let $\mathcal B \subseteq \prod M_n$ be the $C^*$-algebra of all $\ast$-strongly convergent sequences and  let $I$ be the ideal of all sequences $\ast$-strongly convergent to zero. Then one can identify $\mathcal B /I$ with $B(H)$ by sending each sequence to its $\ast$-strong limit.
In \cite{DonRFD} the first-named author answered a question of Loring by proving the following: a separable $C^*$-algebra $\mathcal A$ is RFD if and only if  each $\ast$-homomorhism from $\mathcal A$ to $\mathcal B /I$ lifts to a $\ast$-homomorphism from $\mathcal A$ to $\mathcal B$. Since $\mathcal B/I = B(H)$ is a von Neumann algebra, the result follows.
\end{proof}

We next characterize $C^*$-$W^*$- projectivity and $C^*$-$AW^*$-projectivity for separable commutative $C^{*}$-algebras.
Recall that a $C^{\ast}$-algebra is $AW^{\ast}$ \cite{K} if every set of
projections has a least upper bound and every maximal abelian selfadjoint
subalgebra is the C*-algebra generated by its projections.

\begin{theorem}\label{C*-W*Comm}
Let $K$ be a compact metric space. Then the following are equivalent:

\begin{enumerate}
\item $C\left(  K\right)  $ is C*-W* projective,

\item $C\left(  K\right)  $ is C*-AW* projective,

\item $C\left(  K\right)  $ is C*-$\mathcal{Y}$ projective, where
$\mathcal{Y}$ is the class of all unital C*-algebras in which every
commutative separable C*-subalgebra is contained in a commutative C*-algebra
generated by projections.

\item $K$ is a continuous image of $\left[  0,1\right]  $,

\item $K$ is connected and locally path-connected.

\item Every continuous function from a closed subset of $\left[  0,1\right]  $
into $K$ can be extended to a continuous function from $\left[  0,1\right]  $
into $K$.

\item Every continuous function from a closed subset of the Cantor set into
$K$ can be extended to a continuous function from $\left[  0,1\right]  $ into
$K$.
\end{enumerate}
\end{theorem}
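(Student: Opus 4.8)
The plan is to prove the cycle of implications $(4)\Leftrightarrow(5)\Leftrightarrow(6)\Leftrightarrow(7)$ by purely topological arguments, then close the loop with the functional-analytic implications $(4)\Rightarrow(1)\Rightarrow(2)\Rightarrow(3)\Rightarrow(\text{some topological condition})$. The topological equivalences are essentially classical: $(4)\Leftrightarrow(5)$ is the Hahn--Mazurkiewicz theorem (a metrizable space is a continuous image of $[0,1]$ iff it is a compact, connected, locally connected metric space, i.e.\ a Peano continuum, and for compact metric spaces local connectedness is equivalent to local path-connectedness). For $(6)\Rightarrow(4)$, apply the extension property to the inclusion of a two-point subset $\{0,1\}\hookrightarrow[0,1]$ sending it to two points of $K$ — this gives path-connectedness — and then upgrade to a surjection onto all of $K$; conversely $(4)\Rightarrow(6)$ uses that $[0,1]$ is (after a suitable reparametrisation) an absolute retract-like target for maps out of closed subsets of $[0,1]$, exploiting that closed subsets of $[0,1]$ are themselves retracts of $[0,1]$ up to the complementary intervals on which one interpolates along paths in $K$. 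The equivalence $(6)\Leftrightarrow(7)$ comes from the fact that every compact metric space is a continuous image of the Cantor set and every closed subset of $[0,1]$ contains/maps appropriately to a copy of the Cantor set, so the two extension properties transfer back and forth.

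For the $C^*$-algebra side, the implication $(1)\Rightarrow(2)\Rightarrow(3)$ is immediate once one observes that every $AW^*$-algebra, and more generally every algebra in the class $\mathcal Y$ of item (3), is a unital $C^*$-algebra, so each successive class of targets is contained in the previous one and the lifting property only gets easier. The substantive analytic implication is $(4)\Rightarrow(1)$: given a surjection $\pi\colon\mathcal B\twoheadrightarrow\mathcal M$ with $\mathcal M$ a von Neumann algebra and a unital $*$-homomorphism $\phi\colon C(K)\to\mathcal M$, I would use a continuous surjection $f\colon[0,1]\to K$ from (4) to factor the problem: $\phi$ induces $\phi\circ f^*$-type data, but more to the point $C(K)\hookrightarrow C[0,1]$ via $f$, so it suffices to lift along $C[0,1]$. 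Now a unital $*$-homomorphism $C[0,1]\to\mathcal M$ is the same as a single self-adjoint contraction $h\in\mathcal M$ with spectrum in $[0,1]$ (functional calculus); pick any self-adjoint contraction $b\in\mathcal B$ with $\pi(b)=h$ (possible since $\pi$ is a surjection of $C^*$-algebras and self-adjoint contractions lift), cut it down to have spectrum in $[0,1]$ by replacing $b$ with $(b\vee 0)\wedge 1$, and the functional calculus $C[0,1]\ni g\mapsto g(b)\in\mathcal B$ is the desired lift. The only point requiring the hypothesis that $K$ be a \emph{continuous image of $[0,1]$} rather than an arbitrary compact set is precisely that this allows us to embed $C(K)$ unitally into the projective (indeed semiprojective) algebra $C[0,1]$; for general $K$ no such embedding into a one-dimensional commutative algebra exists, which is where the topological restriction enters.

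The main obstacle I anticipate is \textbf{closing the loop}, i.e.\ proving that $C^*$-$\mathcal Y$-projectivity (item (3)) forces $K$ to satisfy one of the topological conditions — say (7), extension of maps from closed subsets of the Cantor set. The natural strategy is to build, from the extension property failing, a concrete surjection $\mathcal B\twoheadrightarrow\mathcal M$ onto a commutative von Neumann (or $AW^*$, or $\mathcal Y$-class) algebra that obstructs the lift: take $\mathcal M=\ell^\infty(\mathbb N)/c_0$-type or an $L^\infty$ of a suitable measure, realised as $C(\beta\mathbb N\setminus\mathbb N)$ or via the Cantor set, with $\mathcal B=C(\text{Cantor set})$ or $C[0,1]$ mapping onto it; a $*$-homomorphism $\phi\colon C(K)\to\mathcal M$ corresponds to a map from a closed (Cantor-like) subset into $K$, and a lift to $\mathcal B$ corresponds to extending it to $[0,1]$. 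Making the duality between "quotient of $C[0,1]$ onto a commutative algebra generated by projections" and "closed subset of the Cantor set / of $[0,1]$" precise — and checking the target genuinely lands in the class $\mathcal Y$ — is the delicate bookkeeping step; the cleanest route is probably to use $\mathcal B = C[0,1]$, let $D\subseteq[0,1]$ be a closed set, $\mathcal M = C(D)$, note $C(D)$ is generated by projections iff $D$ is totally disconnected but in general embeds in such an algebra, and translate a continuous $\phi\colon C(K)\to C(D)$, i.e.\ a continuous $D\to K$, through the lifting diagram to get the extension to $[0,1]$. I would present the topological equivalences first (citing Hahn--Mazurkiewicz), then $(4)\Rightarrow(1)\Rightarrow(2)\Rightarrow(3)$, and finally the obstruction argument $(3)\Rightarrow(7)$, which is the heart of the proof.
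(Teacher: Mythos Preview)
There are two genuine gaps that prevent your cycle from closing.

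First, the ``trivial'' implications go the other way. Every von Neumann algebra is an $AW^{*}$-algebra, and every $AW^{*}$-algebra belongs to the class $\mathcal{Y}$ (since its masas are generated by projections). So the class of allowed targets \emph{grows} as we pass from (1) to (2) to (3), and the lifting property becomes \emph{harder}: the cheap implications are $(3)\Rightarrow(2)\Rightarrow(1)$, not $(1)\Rightarrow(2)\Rightarrow(3)$ as you wrote. With the implications reversed, your proposed cycle $(4)\Rightarrow(1)\Rightarrow(2)\Rightarrow(3)\Rightarrow(7)$ no longer closes.

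Second, and more seriously, your argument for $(4)\Rightarrow(1)$ does not work. The embedding $f^{*}:C(K)\hookrightarrow C[0,1]$ induced by a surjection $f:[0,1]\to K$ does \emph{not} reduce the lifting problem for $C(K)$ to that for $C[0,1]$: you are given $\phi:C(K)\to\mathcal{M}$, and there is no canonical extension of $\phi$ to a $\ast$-homomorphism $C[0,1]\to\mathcal{M}$. Your choice of a self-adjoint $h\in\mathcal{M}$ with spectrum in $[0,1]$ is precisely such an extension, and none is supplied by the data. Indeed, your argument never uses that $\mathcal{M}$ is a von Neumann algebra, so if it were correct it would show $C(K)$ is $C^{*}$-$C^{*}$ projective whenever $K$ is a Peano continuum---false already for $K=[0,1]^{2}$. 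The paper instead proves $(1)\Rightarrow(4)$ (via the bidual and an embedding of the Gelfand spectrum of $C(K)^{**}$ into a cube $\prod[0,1]$, then a separability reduction) and $(7)\Rightarrow(3)$: the $\mathcal{Y}$-hypothesis on $\mathcal{M}$ is exactly what lets one place $\phi(C(K))$ inside $C^{*}(a)$ for a single self-adjoint $a$ with $\sigma(a)$ contained in the Cantor set, after which the dual map $\sigma(a)\to K$ is extended to $[0,1]$ via (7) and the single element $a$ is lifted to $\mathcal{B}$. Your sketch of $(3)\Rightarrow(7)$ via $\mathcal{B}=C[0,1]\twoheadrightarrow C(D)=\mathcal{M}$ for $D$ a closed subset of the Cantor set is correct and pleasant, but it is the converse of what is needed once the trivial implications are oriented correctly.
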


\begin{proof}
The implications $\left(  3\right)  \Rightarrow \left(  2\right)
\Rightarrow \left(  1\right)  $ and $\left(  6\right)  \Rightarrow \left(
7\right)  $ are clear.

$\left(  4\right)  \Leftrightarrow \left(  5\right)  $. This is the
Hahn-Mazurkiewicz theorem (\cite{TopologyBook}, Th. 3-30).

$\left(  1\right)  \Rightarrow \left(  4\right)  .$ Suppose $C\left(  K\right)
$ is C*-W* projective. Then there is a compact Hausdorff space $X$ such that
$C\left(  K\right)  ^{**}=C\left(  X\right)  $. We know that there is an
embedding $\varphi:X\rightarrow%
{\displaystyle \prod_{i\in I}}
[0, 1]  $ (product topology). Hence there is a surjective $\ast
$-homomorphism from $C\left(
{\displaystyle \prod_{i\in I}}
[0, 1]  \right)  $ onto $C\left(  K\right)  ^{**}$. Since the
canonical embedding from $C\left(  K\right)  $ to $C\left(  K\right)  ^{**}$
is an injective $\ast$-homomorphism, we know from that $\left(  1\right)  $
there is an injective unital $\ast$-homomorphism from $C\left(  K\right)  $ to
$C\left(
{\displaystyle \prod_{i\in I}}
[0, 1]  \right)  $. Hence there is a continuous surjective map
$\beta:%
{\displaystyle \prod_{i\in I}}
[0, 1]  \rightarrow K$. Suppose $D$ is a countable dense subset of
$K$ and $E$ is the set of all elements of $%
{\displaystyle \prod_{i\in I}}
[0, 1]  $ with finite support, i.e., only finitely many nonzero
coordinates. If $x\in D,$ then there is a countable set $E_{x}$ of $E$ such
that $x\in \overline{\beta \left(  E_{x}\right)  }$. Hence
\[
\beta \overline{\left(  \cup_{x\in D}E_{x}\right)  }=K,
\]
since $\beta \overline{\left(  \cup_{x\in D}E_{x}\right)  }$ is compact and
contains $D$. It follows that there is a countable subset $J\subseteq I$ such
that%
\[
\overline{\left(  \cup_{x\in D}E_{x}\right)  }\subseteq%
{\displaystyle \prod_{i\in J}}
[0, 1]  \times%
{\displaystyle \prod_{i\in I\backslash J}}
\left \{  0\right \}  ;
\]
whence,
\[
\beta \left(
{\displaystyle \prod_{i\in J}}
[0, 1]  \times%
{\displaystyle \prod_{i\in I\backslash J}}
\left \{  0\right \}  \right)  =K.
\]
Hence $K$ is a continuous image of $%
{\displaystyle \prod_{i\in J}}
[0, 1]  ,$ which in turn a continuous image of $\left[
0,1\right]  .$ Thus $K$ is a continuous image of $\left[  0,1\right]  $.

$\left(  5\right)  \Longrightarrow \left(  6\right)  $. Suppose $K$ is
connected and locally path connected and $E$ is any closed subset of $\left[
0,1\right]  ,$ and $f:E\rightarrow K$ is continuous. For $x,y\in K,$ let
$\Delta \left(  x,y\right)  $ be the infimum of the diameters of every path in
$K$ from $x$ to $y$. We first note that%
\[
\lim_{d\left(  x,y\right)  \rightarrow0}\Delta \left(  x,y\right)  =0.
\]
If this is not true, then there is an $\varepsilon>0$ and sequences $\left \{
x_{n}\right \}  $ and $\left \{  y_{n}\right \}  $ in $K$ such that $d\left(
x_{n},y_{n}\right)  \rightarrow0$ and $\Delta \left(  x_{n},y_{n}\right)
\geq \varepsilon$ for every $n\in \mathbb{N}$. Since $K$ is compact we can
replace $\left \{  x_{n}\right \}  $ and $\left \{  y_{n}\right \}  $ with
subsequences that converge to $x$ and $y$, respectively. Since $d\left(
x,y\right)  =\lim_{n\rightarrow \infty}d\left(  x_{n},y_{n}\right)  =0$, we see
that $x=y.$ Since $X$ is locally connected, there is path connected
neighborhood $U$ of $x$ such that $U$ is contained in the ball centered at $x$
with radius $\varepsilon/3$. There must be an $n$ such that $x_{n},y_{n}\in
U,$ and there must be a path $\gamma$ in $U$ from $x_{n}$ to $y_{n}$. Since
$\gamma$ is in the ball centered at $x$ with radius $\varepsilon/3$, the
diameter of $\gamma$ is at most $2\varepsilon/3$, which implies $\Delta \left(
x_{n},y_{n}\right)  <\varepsilon$, a contradiction.

We can clearly add $0$ and $1$ to $E$ and extend $f$ so that it is still
continuous. Hence we can assume that $0,1\in E$. We can write
\[
\left[  0,1\right]  \backslash E=\bigcup_{n\in I}\left(  a_{n},b_{n}\right)
\]
where $\left \{  \left(  a_{n},b_{n}\right)  :n\in I\right \}  $ is a disjoint
set of open intervals with $I\subseteq \mathbb{N}$. For each $n\in I$ we chose
a path $\gamma_{n}:\left[  a_{n},b_{n}\right]  \rightarrow K$ from $f\left(
a_{n}\right)  $ to $f\left(  b_{n}\right)  $ so that the diameter of
$\gamma_{n}\ $is less than $2\Delta \left(  f\left(  a_{n}\right)  ,f\left(
b_{n}\right)  \right)  $.

$\left(  7\right)  \Longrightarrow \left(  3\right)  $. Suppose $\left(
7\right)  $ is true. Suppose $\mathcal{B}$ is a unital C*-algebra,
$\mathcal{M}\in \mathcal{Y}$ and $\pi:\mathcal{B}\rightarrow \mathcal{M}$ is a
unital surjective $\ast$-homomorphism. Let $\rho:C\left(  K\right)
\rightarrow \mathcal{M}$ be a unital $\ast$-homomorphism. Since $K$ is
metrizable, $C\left(  K\right)  $ is separable, and since $\mathcal{M}%
\in \mathcal{Y}$, there a countable commuting family $\left \{  p_{1}%
,p_{2},\ldots \right \}  $ of projections in $\mathcal{M}$ such that
$\mathcal{\rho}\left(  C\left(  K\right)  \right)  \mathcal{\subseteq C}%
^{\ast}\left(  p_{1},p_{2},\ldots \right)  $. Let $E$ be the maximal ideal
space of $\mathcal{C}^{\ast}\left(  p_{1},p_{2},\ldots \right)  $. Since
$\mathcal{C}^{\ast}\left(  p_{1},p_{2},\ldots \right)  $ is generated by
countable many projections, $E$ is a totally disconnected compact metric space
and is therefore homeomorphic to a subset of the Cantor set. Hence there is an
$a=a^{\ast}\in \mathcal{C}^{\ast}\left(  p_{1},p_{2},\ldots \right)  $ such that
$\mathcal{C}^{\ast}\left(  p_{1},p_{2},\ldots \right)  =C^{\ast}\left(
a\right)  $ and $\sigma \left(  a\right)  $ (homeomorphic to $E$) is a subset
of the Cantor set.  Let $\Gamma:C^{\ast}\left(  a\right)  \rightarrow C\left(
\sigma \left(  a\right)  \right)  $ be the Gelfand map. Then $\Gamma \circ
\rho:C\left(  K\right)  \rightarrow C\left(  \sigma \left(  a\right)  \right)
$ is a unital $\ast$-homomorphism, so there is a continuous function
$\psi:\sigma \left(  a\right)  \rightarrow K$ so that
\[
\Gamma \left(  \rho \left(  f\right)  \right)  =f\circ \psi
\]
for every $f\in C\left(  K\right)  .$ By applying $\Gamma^{-1}$, we get%
\[
\rho \left(  f\right)  =\left(  f\circ \psi \right)  \left(  a\right)
\]
for every $f\in C\left(  K\right)  .$

By $\left(  7\right)  $, we can assume (by extending) that $\psi:\left[
0,1\right]  \rightarrow K$.  We can find $A\in \mathcal{B}$ with $0\leq A\leq1$
such that $\pi \left(  A\right)  =a.$ We now define $\nu:C\left(  K\right)
\rightarrow \mathcal{B}$ by
\[
\nu \left(  f\right)  =\left(  f\circ \psi \right)  \left(  A\right)  .
\]
Then $\nu$ is a unital $\ast$-homomorphism and, for every $f\in C\left(
K\right)  $, we have%
\[
\pi \left(  \nu \left(  f\right)  \right)  =\pi \left(  \left(  f\circ
\psi \right)  \left(  A\right)  \right)  =\left(  f\circ \psi \right)  \left(
a\right)  =\rho \left(  f\right)  .
\]
Hence, $\rho=\pi \circ \nu$. This proves $\left(  3\right)  $.
\end{proof}

\bigskip






Let $K$ be a compact metric space and let $x_0$ be  any point in $K$. Let us denote by $C_{0}(K\setminus\{x_0\})$  the $C^*$-algebra of all continuous functions on $K$ vanishing at $x_0$.

\begin{theorem}
\label{MatricesOverCommAlgebras} Let $K$ be a continuous image of $\left[
0,1\right]  $, $n\in \mathbb{N}$. Then the $C^{*}$-algebra $C_{0}(K\setminus\{x_0\}) \otimes M_{n}$
is $C^{*}$-$W^{*}$-projective in the non-unital category.
\end{theorem}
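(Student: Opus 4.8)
The plan is to reduce the lifting problem for $C_0(K\setminus\{x_0\})\otimes M_n$ to the already-established $C^*$-$W^*$-projectivity of $C(K)$ (Theorem \ref{C*-W*Comm}, equivalence $(1)\Leftrightarrow(4)$), using a picture of $C_0(K\setminus\{x_0\})\otimes M_n$ as a $C^*$-algebra generated by a single positive contraction together with a set of matrix units. Concretely, fix matrix units $\{e_{ij}\}_{i,j=1}^n$ for $M_n$; then $C_0(K\setminus\{x_0\})\otimes M_n$ is generated by the elements $f\otimes e_{ij}$, and the essential structure is carried by the positive function $h\in C_0(K\setminus\{x_0\})$ obtained (via Urysohn/Tietze, using that $K$ is a continuous image of $[0,1]$ hence path-connected and metrizable) so that $C^*(h)=C_0(\sigma(h)\setminus\{0\})$ and $\sigma(h)\subseteq[0,1]$ with a natural surjection $[0,1]\twoheadrightarrow K$ pulling functions back. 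The key observation is that a $*$-homomorphism from $C_0(K\setminus\{x_0\})\otimes M_n$ into a $C^*$-algebra $\mathcal{M}$ is exactly the data of a system of matrix units $\{E_{ij}\}$ in $\mathcal{M}$ together with a positive contraction $b=b^*$ commuting with all $E_{ij}$ in the hereditary subalgebra $E_{11}\mathcal{M}E_{11}$ such that the appropriate functional-calculus relations hold; equivalently, after compressing, a unital (into the corner) $*$-homomorphism from a unitization-type commutative algebra together with a set of $n\times n$ matrix units.

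First I would set up the surjection: let $\pi:\mathcal{B}\to\mathcal{M}$ be surjective with $\mathcal{M}$ a von Neumann algebra, and let $\phi:C_0(K\setminus\{x_0\})\otimes M_n\to\mathcal{M}$ be a $*$-homomorphism. Put $E_{ij}=\phi(h\otimes e_{ij})$-type elements; more carefully, since matrix units are projective (this is classical, and is the $n$-by-$n$ matrix unit relation — liftable with respect to any surjection), lift $\{E_{ij}\}$ to a system of matrix units $\{F_{ij}\}$ in $\mathcal{B}$ with $\pi(F_{ij})=E_{ij}$. Second, I would pass to corners: $\phi$ restricted to $C_0(K\setminus\{x_0\})\otimes e_{11}$ lands in $E_{11}\mathcal{M}E_{11}$, which is again a von Neumann algebra (a corner of a von Neumann algebra), and $F_{11}\mathcal{B}F_{11}\to E_{11}\mathcal{M}E_{11}$ is a surjection of unital $C^*$-algebras. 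Third, apply the $C^*$-$W^*$-projectivity of $C(K)$: the map $C_0(K\setminus\{x_0\})\to E_{11}\mathcal{M}E_{11}$ extends/corresponds to a unital $*$-homomorphism $\widetilde{C_0(K\setminus\{x_0\})}=C(K)\to E_{11}\mathcal{M}E_{11}$ (here I use $\widetilde{C_0(K\setminus\{x_0\})}\cong C(K)$ when $K$ is connected, together with Proposition stating the unital/non-unital correspondence), which by Theorem \ref{C*-W*Comm} lifts to a unital $*$-homomorphism into $F_{11}\mathcal{B}F_{11}$; restricting back to the non-unital part gives $\psi_0:C_0(K\setminus\{x_0\})\to F_{11}\mathcal{B}F_{11}$ with $\pi\circ\psi_0=\phi|_{C_0(K\setminus\{x_0\})\otimes e_{11}}$. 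Finally, reassemble: define $\psi(f\otimes e_{ij})=F_{i1}\psi_0(f)F_{1j}$ and check this is a well-defined $*$-homomorphism on $C_0(K\setminus\{x_0\})\otimes M_n$ (the $\{F_{ij}\}$ being genuine matrix units makes the multiplicativity and $*$-preservation routine, and commutativity of $\psi_0(f)$ with $F_{11}$ holds since $\psi_0$ maps into the corner $F_{11}\mathcal{B}F_{11}$), with $\pi\circ\psi=\phi$ by construction.

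The main obstacle I anticipate is the bookkeeping in the reduction to corners: I must be careful that the surjection $F_{11}\mathcal{B}F_{11}\to E_{11}\mathcal{M}E_{11}$ really is onto (it is, because $\pi(F_{11})=E_{11}$ and $\pi$ is onto, so $\pi(F_{11}bF_{11})=E_{11}\pi(b)E_{11}$), that $E_{11}\mathcal{M}E_{11}$ is genuinely a von Neumann algebra and not merely an $AW^*$-algebra (a corner $pMp$ of a von Neumann algebra $M$ by a projection $p\in M$ is a von Neumann algebra, acting on $pH$), and that the identification $\widetilde{C_0(K\setminus\{x_0\})}\cong C(K)$ used to invoke the unital theorem is legitimate — this needs $K$ connected, which follows since $K$ is a continuous image of $[0,1]$. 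A secondary point requiring care is that the lift of matrix units and the lift $\psi_0$ are chosen compatibly, i.e. $\psi_0$ must land in the corner determined by the \emph{same} lifted unit $F_{11}$; this is automatic if one first lifts the matrix units and only afterwards applies $C^*$-$W^*$-projectivity inside the fixed corner $F_{11}\mathcal{B}F_{11}$. Everything else — verifying the matrix-unit algebra relations for $\psi$, and that $\pi\circ\psi=\phi$ — is a direct computation using $F_{i1}F_{1j}=F_{ij}$ and $\pi$ being a $*$-homomorphism.
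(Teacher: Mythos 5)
Your reduction hinges on the step ``since matrix units are projective \dots lift $\{E_{ij}\}$ to a system of matrix units $\{F_{ij}\}$ in $\mathcal{B}$,'' and that step is false. Matrix units (equivalently, $M_n(\mathbb{C})$ as a universal $C^*$-algebra) are semiprojective but not projective: not even a single projection can be lifted along an arbitrary surjection. A concrete counterexample in exactly the $C^*$-$W^*$ setting of the theorem: take $\mathcal{B}=C[0,1]$, $\mathcal{M}=\mathbb{C}\oplus\mathbb{C}$ (a von Neumann algebra), and $\pi$ the evaluation at the two endpoints; the projection $(1,0)$ has no projection preimage because $[0,1]$ is connected. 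More to the point, the paper's Proposition \ref{NoProjections} shows that no unital $C^*$-algebra containing a nontrivial projection can be $C^*$-$W^*$-projective (one can always take $\mathcal{B}$ to be the unitization of a cone, which has no nontrivial projections), and Proposition \ref{M_nNonUnital} records that $M_n(\mathbb{C})$ is not even $W^*$-$C^*$-projective. So although you can legitimately produce genuine matrix units $E_{ij}$ in $\mathcal{M}$ (by extending $\phi$ to the bidual, since $\mathcal{M}$ is a von Neumann algebra), there is in general no system of matrix units upstairs mapping onto them, and the corner $F_{11}\mathcal{B}F_{11}$ in which you want to apply Theorem \ref{C*-W*Comm} does not exist. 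A secondary inaccuracy feeding into this: a $*$-homomorphism out of $C_0(K\setminus\{x_0\})\otimes M_n$ is not ``the data of a system of matrix units together with a commuting positive contraction'' --- the image need contain no projections at all; the correct generators-and-relations picture is that of weighted matrix units as in Lemma \ref{universal}, where the diagonal generator is only a positive contraction.

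This obstruction is exactly what the paper's argument is engineered to avoid. Rather than lifting projections, the paper routes the lift through the cone $C_0(0,1]\otimes M_n$, which is projective (Loring, Th.~10.2.1) precisely because it has no nontrivial projections: one writes $C_0(K\setminus\{x_0\})^{**}\cong C(X)$ with $X$ extremally disconnected, uses Gleason's theorem to factor the canonical surjection $X\twoheadrightarrow K$ through a fixed surjection $[0,1]\twoheadrightarrow K$, extends $\phi$ to $C(X)\otimes M_n$ using that $\mathcal{M}$ is a von Neumann algebra, and then lifts the resulting map on $C_0(0,1]\otimes M_n$. If you want to preserve your ``lift the matrix structure first'' architecture, you would have to lift it only as weighted matrix units subordinate to a lifted positive contraction; making that precise is essentially the projectivity of $C_0(0,1]\otimes M_n$, i.e.\ you are led back to the paper's route.
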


\begin{proof}
Since $C_0(K\setminus\{x_0\})^{**}$ is a commutative von Neumann algebra, $$C_0(K\setminus\{x_0\})^{**} \cong C(X),$$ for some extremally disconnected space $X$.
Let $i_{\ast}: C_0(K\setminus\{x_0\}) \to C(X)$ be the canonical embedding into the bidual. It is induced by some surjective continuous map $i: X \twoheadrightarrow K$.
Since $K$ is a continuous image of $\left[  0,1\right]  $, there is  a surjective continuous map $\alpha: [0, 1]\twoheadrightarrow K$.
By the universal property of extremally disconnected spaces (Gleason's theorem), $i$ factorizes through a continuous map $\beta: X \to [0, 1]$.
$$\xymatrix {[0, 1] \ar@{->>}[d]^-{\alpha}  &  \\
K & X \ar@{->>}[l]_-{i}  \ar@/_/[ul]^{\beta} }$$
Let $$\alpha_{\ast}: C_0(K\setminus\{x_0\}) \to C_0(0, 1], \; \; \beta_{\ast}: C_0(0, 1]\to C_0(X) \subset C(X)$$ be the $\ast$-homomorphisms induced by $\alpha$ and $\beta$.
Let $\mathcal M$ be a von Neumann algebra, $\mathcal B$ a $C^*$-algebra, $q: \mathcal B\to \mathcal M$ a surjective $\ast$-homomorphism. Let $\phi: C_0(K\setminus\{x_0\}) \otimes M_n \to \mathcal M$ be a $\ast$-homomorphism. By the universal property of double duals we can extend $\phi$ to a $\ast$-homomorphism $\tilde \phi: C(X) \otimes M_n \cong (C_0(K\setminus\{x_0\})\otimes M_n)^{**} \to \mathcal M$.
$$\xymatrix { B \ar[dd]^{q} & & &\\  && C_0(0, 1]\otimes M_n  \ar[dr]_{\beta_{\ast}\otimes id_{M_n}} \ar@{.>}[ull]_{\psi}& \\ \mathcal M & C_0(K\setminus \!\{x_0\})\!\otimes \! M_n   \ar[l]_{\phi\;\;\;\;\;\;\;\;\;\;\;\;\;\;} \ar@{^{(}->}[rr]^{i_{\ast}\otimes id_{M_n}} \ar[ur]_{\alpha_{\ast}\otimes id_{M_n}} & & {C(X)\otimes M_n}
\ar@/^/[lll]^{\tilde \phi}
}$$
Since $C_0(0, 1]\otimes M_n$ is projective (\cite{LoringBook}, Th. 10.2.1), there is a $\ast$-homomorphism $\psi: C_0(0, 1]\otimes M_n \to \mathcal B$ such that
$$q\circ \psi = \tilde \phi \circ (\beta_{\ast}\otimes id_{M_n}).$$ Then $$q\circ \psi \circ (\alpha_{\ast}\otimes id_{M_n}) = \tilde \phi \circ (\beta_{\ast}\otimes id_{M_n}) \circ (\alpha_{\ast}\otimes id_{M_n}) =\tilde \phi \circ (i_{\ast}\otimes id_{M_n}) =  \phi.$$ Thus $\psi \circ (\alpha_{\ast}\otimes id_{M_n})$ is a lift of $\phi$.
\end{proof}

We don't know if the previous result can be generalized to  get the
following: if $A$ is $C^{*}$-$W^{*}$-projective in the non-unital category, then
so is $A\otimes M_{n}$.

\medskip

Recent developments in the classification of $C^{*}$-algebras show the
importance of the analysis of central sequence algebras.  Let
$\omega$ be a free ultrafilter on $\mathbb{N}$. With any $C^*$-algebra $A$ with a faithful tracial state $\tau$ one can associate its $C^{*}$-central sequence algebra $A_{\omega}\cap A^{\prime}$ and its $W^{*}$-central sequence algebra $N^{\omega}\cap
N^{\prime}$, where $N$ is the weak closure of $A$ under the
GNS representation $\pi_{\tau}$ of $A$. There is a natural $\ast$-homomorphism $\gamma: A_{\omega}\cap A^{\prime} \to N^{\omega}\cap
N^{\prime}$; it was proved in \cite{Sato} and \cite{KR} that $\gamma$ is surjective.

A commonly used tool in classification of $C^{*}$-algebras is the fact that an
order zero map from the matrix algebra $M_{n}$ to any quotient $C^{*}$-algebra
lifts (the so-called projectivity of order zero maps). In particular a
possibility to lift an order zero map $M_{n} \to N^{\omega}\cap N^{\prime}$ to
an order zero map $M_{n}\to A_{\omega}\cap A^{\prime}$ is a key ingredient to
obtain uniformly tracially large order zero maps (\cite{TWW}). Below we prove
a stronger statement: one can lift any commuting family of order zero maps
$M_{n} \to N^{\omega}\cap N^{\prime}$ to a commuting family of order zero maps $M_{n}\to A_{\omega}\cap A^{\prime}$.

\begin{lemma}
\label{universal} Let $d\in \mathbb{N}$, $k\in \mathbb{N }\cup \{ \infty \}$. The
$C^{*}$-algebra $C_{0}([0, 1]^{k}) \otimes M_{kd}$ is isomorphic to the
universal $C^{*}$-algebra with generators $e_{ij}^{n, l}$, $l\le k$, $i, j\le
d$, $n\in \mathbb{N}$ and relations
\[
0\le e_{ii}^{n, l} \le1
\]
\[
\left( e_{ij}^{n, l}\right) ^{\ast} = e_{ji}^{n, l},
\]
\[
e_{ij}^{m, l}e_{ks}^{n, l} = \delta_{jk}e_{is}^{m,l}, \; m< n
\]
\[
e_{ij}^{n, l}e_{ks}^{n, l} = \delta_{jk}e_{ii}^{n, l}e_{is}^{n, l},
\]
\[
[e_{ij}^{n, l}, e_{i^{\prime}j^{\prime}}^{n^{\prime}, l^{\prime}}] =
[e_{ij}^{n, l}, \left( e_{i^{\prime}j^{\prime}}^{n^{\prime}, l^{\prime}%
}\right) ^{*}] =0 \; \text{when} \; l\neq l^{\prime}.
\]

\end{lemma}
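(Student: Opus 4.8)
The plan is to read the statement as a \emph{presentation} result and to prove it by producing $\ast$-homomorphisms in both directions between the abstractly defined universal $C^*$-algebra $\mathcal U$ on the generators $e_{ij}^{n,l}$ and the concrete algebra $C_0([0,1]^k)\otimes M_{kd}$, whose composites are the identity. One direction is routine: exhibit explicit elements of $C_0([0,1]^k)\otimes M_{kd}$ satisfying relations (1)--(5); by the universal property of $\mathcal U$ this gives a $\ast$-homomorphism $\rho:\mathcal U\to C_0([0,1]^k)\otimes M_{kd}$, which is onto once one checks the chosen elements generate. The substance is the reverse direction, i.e.\ showing that $\rho$ is faithful, equivalently that \emph{any} family satisfying (1)--(5) in an arbitrary $C^*$-algebra assembles into a homomorphic image of $C_0([0,1]^k)\otimes M_{kd}$.

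The structural backbone I would use is the order zero calculus. For each fixed pair $(n,l)$, relations (1), (2) and (4) are precisely the relations satisfied by a completely positive order zero map $M_d\to\mathcal U$, $e_{ij}\mapsto e_{ij}^{n,l}$; since the universal $C^*$-algebra for an order zero map out of $M_d$ is the cone $C_0(0,1]\otimes M_d$, which is projective by \cite{LoringBook}, Th.~10.2.1, each level yields a canonical $\ast$-homomorphism $C_0(0,1]\otimes M_d\to\mathcal U$. Relation (3) is the compatibility linking the levels: setting $h_n^l=\sum_i e_{ii}^{n,l}$, it forces $h_n^l$ to act as a unit on the level-$m$ image whenever $m<n$, so the order zero maps indexed by $n$ form an increasing tower whose ``limit'' is a single order zero structure. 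I would make this precise by computing the irreducible representations of the $l$-th subalgebra and checking that the tower $(h_n^l)_n$ parametrizes one interval coordinate (with the distinguished basepoint coming from the representation in which all $e_{ij}^{n,l}$ vanish), so that the $l$-th piece is a single copy of $C_0(0,1]\otimes M_d$. Finally relation (5) says the $k$ pieces commute, and because $C_0(0,1]\otimes M_d$ is nuclear the commuting family is controlled by a unique tensor product, which one then assembles into $C_0([0,1]^k)\otimes M_{kd}$.

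The step I expect to be the real obstacle is exactly this last assembly together with the injectivity of $\rho$: one must verify that (1)--(5) impose no collapse beyond what the concrete model already exhibits, and, crucially, track the basepoint and the matrix size correctly through the $k$-fold commuting construction. This is delicate precisely because the pieces $C_0(0,1]\otimes M_d$ are non-unital, so the naive tensor product $\bigotimes_{l}\bigl(C_0(0,1]\otimes M_d\bigr)$ does not account for representations in which some of the order zero maps vanish; it is the passage to the \emph{closed} cube $[0,1]^k$ (rather than $(0,1]^k$) together with the vanishing-at-the-corner convention built into $C_0([0,1]^k)$ that repairs this, and getting the fibre and the stated matrix size to come out correctly over the whole cube is the heart of the matter. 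I would settle it by computing the primitive ideal space of $\mathcal U$ directly from the relations, exhibiting the homeomorphism with the spectrum of $C_0([0,1]^k)\otimes M_{kd}$ and checking that $\rho$ is fibrewise an isomorphism; since both algebras are type~I (indeed nuclear and subhomogeneous), a surjection inducing such a homeomorphism of spectra is an isomorphism, which upgrades $\rho$ to the desired $\ast$-isomorphism.
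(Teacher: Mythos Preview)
The paper does not write out a proof of this lemma; the authors say only that ``the proof of the lemma is analogous to the proof of Lemma 6.2.4 in \cite{LoringBook}, so we don't write it.'' Your plan---recognize each fixed-$l$ block as the standard cone presentation $C_0(0,1]\otimes M_d$ via the order zero calculus (which is exactly what Loring's lemma does for a single block), and then assemble the $k$ mutually $\ast$-commuting blocks while tracking the basepoint through the non-unital tensor construction---is precisely the natural extension of Loring's argument to several commuting copies, and is what the paper is pointing to. Your identification of the one genuinely delicate step (that the naive tensor $\bigotimes_l\bigl(C_0(0,1]\otimes M_d\bigr)$ misses the boundary faces of the cube, and that passing to the closed cube with the vanishing-at-the-corner convention is what repairs this) is exactly right; the primitive-ideal-space argument you propose is a clean way to certify injectivity once surjectivity is in hand, since both sides are subhomogeneous.

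One caveat to carry through your computation honestly rather than aiming for the stated exponent: when you compute the generic fibre from $k$ mutually $\ast$-commuting copies of $M_d$ you obtain $M_d^{\otimes k}\cong M_{d^k}$, not $M_{kd}$. For the application in Theorem~\ref{OrderZero} this is immaterial---all that is used there is that the universal algebra has the form $C_0(K\setminus\{x_0\})\otimes M_N$ for a Peano continuum $K$ and \emph{some} $N$, so that Theorem~\ref{MatricesOverCommAlgebras} applies---but your fibrewise check will produce $d^k$, and you should not be alarmed when it does.
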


\noindent The proof of the lemma is analogous to the proof of Lemma 6.2.4 in
\cite{LoringBook}, so we don't write it.

\begin{theorem}\label{OrderZero}
Any commuting family of order zero maps from $M_{n}$ to $N^{\omega}\cap
N^{\prime}$ lifts to a commuting family of order zero maps from $M_{n}$ to
$A_{\omega}\cap A^{\prime}$.
\end{theorem}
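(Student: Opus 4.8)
The plan is to encode a commuting family of order zero maps by a single $\ast$-homomorphism out of an algebra of the form $C_0((0,1]^k)\otimes M_n$, apply the machinery of Theorem~\ref{MatricesOverCommAlgebras} (more precisely, its method of proof) to lift along the surjection $A_\omega\cap A'\twoheadrightarrow N^\omega\cap N'$, and then read the lift back as a commuting family of order zero maps. Concretely, recall (Winter--Zacharias) that an order zero map $M_n\to \mathcal M$ is the same thing as a $\ast$-homomorphism $C_0((0,1])\otimes M_n\to\mathcal M$: this is the content that makes $C_0((0,1])\otimes M_n$ the ``universal order zero $M_n$-algebra''. Lemma~\ref{universal} is the multi-variable refinement of this: a \emph{commuting} family of $m$ order zero maps $M_n\to\mathcal M$ (indexed $l\le m$) with pairwise commuting ranges corresponds to a single $\ast$-homomorphism $C_0((0,1]^m)\otimes M_n\to\mathcal M$, where the $l$-th tensor leg carries the $l$-th order zero map and the ``$[\,\cdot\,,\cdot\,]=0$ when $l\ne l'$'' relations encode that distinct order zero maps commute. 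So a commuting family $(\Phi_l)_{l\le m}$ of order zero maps $M_n\to N^\omega\cap N'$ is exactly a $\ast$-homomorphism $\phi\colon C_0((0,1]^m)\otimes M_n\to N^\omega\cap N'$; if the family is infinite, use $k=\infty$ in Lemma~\ref{universal} and the algebra $C_0((0,1]^\infty)\otimes M_n$.

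First I would handle the case of a finite family, say of size $m$. Set $K=[0,1]^m$, which is a continuous image of $[0,1]$ (it is a Peano continuum; alternatively apply Hahn--Mazurkiewicz, or just iterate a Peano curve), and pick the basepoint $x_0=(0,\dots,0)$ so that $C_0(K\setminus\{x_0\})=C_0((0,1]^m)$ in the sense needed. Now run the argument of Theorem~\ref{MatricesOverCommAlgebras} verbatim with $\mathcal B=A_\omega\cap A'$, $\mathcal M=N^\omega\cap N'$ a von Neumann algebra, and $q=\gamma$ the surjection of \cite{Sato},\cite{KR}: pass to the bidual $C(X)\otimes M_n\cong(C_0((0,1]^m)\otimes M_n)^{**}$ with $X$ extremally disconnected, extend $\phi$ to $\tilde\phi$ on the bidual, use Gleason's theorem to factor the canonical map $X\twoheadrightarrow K$ through a map $\beta\colon X\to[0,1]$ (using the Peano surjection $\alpha\colon[0,1]\twoheadrightarrow K$), use projectivity of $C_0((0,1])\otimes M_n$ (\cite{LoringBook}, Th.~10.2.1) to lift $\tilde\phi\circ(\beta_*\otimes\mathrm{id})$ to $\psi\colon C_0((0,1])\otimes M_n\to A_\omega\cap A'$, and finally precompose with $\alpha_*\otimes\mathrm{id}$ to obtain a $\ast$-homomorphism $C_0((0,1]^m)\otimes M_n\to A_\omega\cap A'$ lifting $\phi$. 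By Lemma~\ref{universal} (read in reverse), this $\ast$-homomorphism is precisely a commuting family of order zero maps $M_n\to A_\omega\cap A'$ lifting the original one. For an infinite family, one either runs the same argument with $K=[0,1]^\infty$ (still a Peano space, still a continuous image of $[0,1]$ — the Hilbert cube is Peano) and the projective algebra $C_0((0,1])\otimes M_n$ replaced by the appropriate countable amalgam, or more cleanly lifts each finite subfamily compatibly; I would phrase it as the single statement that $C_0((0,1]^\infty)\otimes M_n$ is $C^*$-$W^*$-projective in the non-unital category by the same bidual/Gleason/projectivity argument, since $[0,1]^\infty$ is a continuous image of $[0,1]$.

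The main obstacle, and the point that needs care, is the dictionary in Lemma~\ref{universal}: one must be sure that a \emph{commuting} family of order zero maps $(\Phi_l)$ — meaning $\Phi_l(x)\Phi_{l'}(y)=\Phi_{l'}(y)\Phi_l(x)$ for $l\ne l'$ and all $x,y\in M_n$, \emph{and} $\Phi_l(x)\Phi_{l'}(y)^*=\Phi_{l'}(y)^*\Phi_l(x)$ — corresponds exactly to a $\ast$-homomorphism on the universal algebra of that lemma, with no hidden extra relations. This is where the relations ``$[e^{n,l}_{ij},e^{n',l'}_{i'j'}]=[e^{n,l}_{ij},(e^{n',l'}_{i'j'})^*]=0$ when $l\ne l'$'' in Lemma~\ref{universal} do their job; the verification is a routine but slightly tedious unwinding of the Winter--Zacharias structure theorem for order zero maps (an order zero map $\Phi\colon M_n\to\mathcal M$ extends to a $\ast$-homomorphism $C_0((0,1])\otimes M_n\to\mathcal M$, whose restriction to $\{0\le t\le 1\}\otimes M_n$ recovers $\Phi$ via $t\otimes e_{ij}\mapsto$ the generators), together with the observation that two order zero maps have commuting (and $*$-commuting) ranges iff the corresponding $\ast$-homomorphisms out of $C_0((0,1])\otimes M_n$ do, iff their images generate a quotient of the tensor product $C_0((0,1]^2)\otimes M_n$. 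A secondary (but entirely routine) point is checking that the basepoint $x_0$ and the non-unital category are handled correctly — i.e.\ that ``$C_0(K\setminus\{x_0\})\otimes M_n$ is $C^*$-$W^*$-projective in the non-unital category'' from Theorem~\ref{MatricesOverCommAlgebras} applies with $K=[0,1]^m$ and the chosen corner point, which it does since $[0,1]^m$ is a continuous image of $[0,1]$ and the theorem is stated at exactly that level of generality.
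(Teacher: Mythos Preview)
Your approach is essentially the same as the paper's: translate the commuting family of order zero maps, via Winter--Zacharias and Lemma~\ref{universal}, into a single $\ast$-homomorphism out of $C_0(K\setminus\{x_0\})\otimes M_N$ with $K=[0,1]^k$, and then invoke Theorem~\ref{MatricesOverCommAlgebras} directly (the paper does not re-run its proof). One small slip: the universal algebra for $m$ commuting copies of $C_0(0,1]\otimes M_n$ is $C_0((0,1]^m)\otimes M_n^{\otimes m}\cong C_0((0,1]^m)\otimes M_{n^m}$, not $C_0((0,1]^m)\otimes M_n$ --- the matrix legs tensor up just as the interval legs do --- but this is immaterial to the argument since Theorem~\ref{MatricesOverCommAlgebras} applies for any matrix size.
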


\begin{proof}  As was proved in \cite{WZ}, with any  order zero map $\phi:B\to D$ one can associate a  $\ast$-homomorphism $f: CB\to D$ and vice versa. Here $CB = C_0(0, 1]\otimes B$ is the cone over $B$.
Moreover it follows from the construction in \cite{WZ} that order zero maps $\phi_i$'s have commuting ranges if and only if the corresponding $f_i$'s have commuting ranges. It also follows from the construction that if for an order zero map $\phi: A \to B/I$  the corresponding $f: CA \to B/I$ lifts to a $\ast$-homomorphism $\tilde f: CA \to B$, then
$\phi$ lifts to the order zero map $\tilde \phi$ corresponding to $\tilde f$. Thus we need to prove that
any family of $\ast$-homomorphisms $f_i: CM_n \to N^{\omega}\cap N'$ with pairwisely commuting ranges lifts to a family of $\ast$-homomorphisms $\tilde f_i:  CM_n \to A_{\omega}\cap A'$ with pairwisely commuting ranges.  It follows from Lemma \ref{universal} that it is equivalent to lifting of one $\ast$-homomorphism from $C_0([0, 1]^k) \otimes M_{kd}$ to $ N^{\omega}\cap N'$, where $k$ is the number of $\ast$-homomorphisms in the family. Since $[0, 1]^k$ is a continuous image of $[0, 1]$, the statement follows from Theorem \ref{MatricesOverCommAlgebras}.
\end{proof}

\section{$W^*$-$C^*$-projectivity and $RR0$-projectivity}

Recall that a $C^*$-algebra has {\it real rank zero } (RR0) if each its self-adjoint element can be approximated by self-adjoint elements with finite spectra.
Since a $\ast$-homomorphic image of a real rank zero C*-algebra is real rank
zero, and since every  von Neumann algebra has real rank zero \cite{BP}, it follows that every
$RR0$-projective C*-algebra is W*-C* projective.

 We first prove  that in the $W^{*}$-$C^{*}$-case we can lift projections. This was proved in the RR0-case by L. G. Brown and G. Pederson
\cite{BP}. We include the brief W*-C* proof because it is much shorter.

\begin{proposition}
\label{proj}$\mathbb{C}\oplus \mathbb{C}$ is  W*-C*-projective.
\end{proposition}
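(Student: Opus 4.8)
The statement to prove is that $\mathbb{C} \oplus \mathbb{C}$ is $W^*$-$C^*$-projective. Since $\mathbb{C} \oplus \mathbb{C}$ is the universal unital $C^*$-algebra generated by a single projection $p$, a unital $\ast$-homomorphism $\phi : \mathbb{C} \oplus \mathbb{C} \to \mathcal{M}$ is the same data as a projection $q = \phi(1,0) \in \mathcal{M}$, and a unital lift $\psi : \mathbb{C} \oplus \mathbb{C} \to \mathcal{B}$ is the same data as a projection $P \in \mathcal{B}$ with $\pi(P) = q$. So the plan is: given a surjective unital $\ast$-homomorphism $\pi : \mathcal{B} \twoheadrightarrow \mathcal{M}$ with $\mathcal{M}$ a von Neumann algebra, and a projection $q \in \mathcal{M}$, produce a projection $P \in \mathcal{B}$ with $\pi(P) = q$.

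The first step is a standard self-adjoint lift: choose $b = b^* \in \mathcal{B}$ with $\pi(b) = q$; this is possible because $\pi$ is surjective and we may replace any lift by its real part. Then $\pi(b) = q$ is a projection, so $\pi(b^2 - b) = q^2 - q = 0$, i.e. the element $b^2 - b$ lies in $\ker \pi$. Now apply functional calculus: the spectrum of $q$ is contained in $\{0,1\}$, so for the lift $b$ the spectral mass of $b$ near $1/2$ maps to $0$ under $\pi$. Concretely, pick a continuous function $f : \mathbb{R} \to \mathbb{R}$ with $f \equiv 0$ on a neighborhood of $0$, $f \equiv 1$ on a neighborhood of $1$, and $f(t)^2 = f(t)$ is not quite achievable by a continuous function — so instead one uses the classical trick: since $q$ is a projection in the $C^*$-algebra $\mathcal{M}$ and $b^2-b \in \ker\pi$, one shows $\operatorname{sp}(b) \subseteq \mathcal{M}$'s... more carefully: the point is that there is $\varepsilon > 0$ such that $\operatorname{sp}(b) \cap (\varepsilon, 1-\varepsilon)$ maps, under the continuous functional calculus composed with $\pi$, into the one-point spectrum structure. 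The cleanest route: the image under $\pi$ of the continuous function $g$ which is $0$ on $(-\infty, 1/2]$ and $1$ on $[1/2+\delta,\infty)$ applied to $b$ equals $q$, for $\delta$ small, provided $1/2 \notin \operatorname{sp}(b)$ — but this last condition can fail. The genuinely correct classical argument (Blackadar, or the Brown–Pedersen reference cited) is: since $\pi(b^2 - b) = 0$, the element $b$ has the property that $\operatorname{dist}(b^2 - b, \ker\pi) = 0$, and one applies the halving function $h(t) = \frac{1}{2}(1 + \operatorname{sgn}(t - 1/2))$ — this is discontinuous, so instead one uses that in a von Neumann algebra (hence here, but we need it upstairs) projections are plentiful.

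The key special feature here — and the reason the proof is "much shorter" than the general RR0 case — is that $\mathcal{M}$ is a von Neumann algebra, so we may work with the open projection / spectral projection directly: the crucial step is that $\operatorname{sp}(b)$ need not contain any neighborhood of $1/2$ because we can \emph{perturb} $b$ within its coset. Precisely: since $\pi(b)=q$ is a projection, for any $\varepsilon>0$ the function $f_\varepsilon$ that is $0$ on $[-\|b\|-1, \varepsilon]$, $1$ on $[1-\varepsilon, \|b\|+1]$, and linear in between, satisfies $\pi(f_\varepsilon(b)) = f_\varepsilon(q) = q$ since $f_\varepsilon(0)=0, f_\varepsilon(1)=1$; so we may replace $b$ by $b' = f_\varepsilon(b)$, whose spectrum lies in $[0,\varepsilon] \cup [1-\varepsilon, 1]$. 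For $\varepsilon < 1/4$ the function $t \mapsto \chi_{[1-\varepsilon,1]}(t)$ is continuous on $\operatorname{sp}(b')$, so $P := \chi(b')$ is a genuine projection in $C^*(b') \subseteq \mathcal{B}$, and $\pi(P) = \chi(\pi(b')) = \chi(q) = q$ since $\operatorname{sp}(q) \subseteq \{0,1\}$ and $\chi(0)=0,\chi(1)=1$. This yields the required projection $P$, and hence the unital $\ast$-homomorphism $\psi$ with $\pi \circ \psi = \phi$.

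I expect the main (only) subtlety to be the very first reduction — getting from the self-adjoint lift $b$ to one whose spectrum avoids a neighborhood of $1/2$ — and making sure no appeal to von Neumann structure of $\mathcal{B}$ is smuggled in; everything is done with continuous functional calculus inside $\mathcal{B}$ itself, and the hypothesis that $\mathcal{M}$ is a von Neumann algebra is in fact not even needed for this particular relation (a projection lifts along any surjection once its preimage spectrum is controlled), which is presumably why the authors phrase it as an easy warm-up before the genuinely $W^*$-specific results.
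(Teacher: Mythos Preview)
Your argument has a genuine gap, and it stems from a misreading of the hypothesis. In $W^*$-$C^*$-projectivity it is the algebra \emph{upstairs}, $\mathcal{B}$, that is a von Neumann algebra; the quotient $\mathcal{M}$ is only a $C^*$-algebra. You have it the other way around, and this matters, because projections do \emph{not} lift along arbitrary surjections of $C^*$-algebras (e.g.\ the restriction $C[0,1]\twoheadrightarrow C(\{0,1\})$). So your closing remark that ``the hypothesis that $\mathcal{M}$ is a von Neumann algebra is in fact not even needed'' is false for the statement actually being proved: some hypothesis on $\mathcal{B}$ is essential.

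The concrete error is the claim that $b'=f_\varepsilon(b)$ has spectrum in $[0,\varepsilon]\cup[1-\varepsilon,1]$. It does not: $\sigma(f_\varepsilon(b))=f_\varepsilon(\sigma(b))$, and your $f_\varepsilon$ maps $(\varepsilon,1-\varepsilon)$ onto $(0,1)$ (indeed $f_\varepsilon(1/2)=1/2$), so if $\sigma(b)$ meets the middle of $[0,1]$ then so does $\sigma(b')$. Continuous functional calculus in $\mathcal{B}$ alone cannot produce a spectral gap that was not already there. The paper's proof exploits the von Neumann structure of $\mathcal{B}$ in exactly the way you flagged and then dismissed: after lifting $q$ and $1-q$ to mutually orthogonal positive contractions $b_1,b_2\in\mathcal{B}$ (Loring's lemma), one takes the \emph{range projection} $p$ of $b_1$ in $\mathcal{B}$; then $b_1\le p$, $b_2\le 1-p$, whence $q\le\pi(p)$ and $1-q\le 1-\pi(p)$, so $\pi(p)=q$. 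The range projection is the step that requires $\mathcal{B}$ to be a von Neumann algebra.
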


\begin{proof}
 Suppose $\mathcal{B}$ is a von Neumann algebra,
$\mathcal{M}$ is a C*-algebra, and $\pi:\mathcal{B}\rightarrow \mathcal{M}$ is
a unital $\ast$-homomorphism. Since $\mathbb{C}\oplus \mathbb{C}$ is the unital universal $C^*$-algebra of one projection, suppose $q\in \mathcal{M}$ is a projection. By [\cite{LoringBook}, Lemma 10.1.12], we can lift $q$ to $b_1$ and $1-q$ to $b_2$ in $\mathcal B$ such that $0\le b_j\le 1$,   $\pi(b_1) = q, \pi(b_2) = 1-q$ and  $b_{1}b_{2}=b_{2}b_{1}=0.$ Let
$p\in \mathcal{B}$ be the range projection for $b_{1}$. Then $pb_{2}=b_{2}p=0.$
Thus $b_{1}\leq p$ and $b_{2}\leq1-p$. Hence $q\leq \pi \left(  p\right)  $ and
$1-q\leq1-\pi \left(  p\right)  $. Hence, $\pi \left(  p\right)  =q$.
\end{proof}

\begin{definition}
Suppose $B,\mathcal{M}$ are unital C*-algebras and $\pi:\mathcal{B}%
\rightarrow \mathcal{M}$ is a surjective $\ast$-homomorphism, and suppose
$\mathcal{S}$ is a C*-subalgebra of $\mathcal{M}$. We say that $\gamma$\emph{
is a }$\ast$\emph{-cross section for }$\pi$\emph{ on }$S$ if $\gamma
:\mathcal{S}\rightarrow \mathcal{B}$ is a $\ast$-homomorphism and $\pi
\circ \gamma$ is the identity on $\mathcal{S}$. Clearly, every such $\gamma$ is injective.
\end{definition}

\begin{theorem}
\label{projlift}Suppose $\mathcal{B}$ is a real rank zero C*-algebra,
$\mathcal{M}$ is a C*-algebra, and $\pi:\mathcal{B}\rightarrow \mathcal{M}$ is
a unital surjective $\ast$-homomorphism. Suppose $\left \{  p_{1},p_{2}%
,\ldots \right \}  \subseteq \mathcal{M}$ is a commuting family of projections.
Then there is an unital $\ast$-cross section $\gamma$ for $\pi$ on $C^{\ast
}\left(  p_{1},p_{2},\ldots \right)  .$ Moreover, if $\gamma_{n}:C^{\ast
}\left(  p_{1},p_{2},\ldots,p_{n}\right)  \rightarrow \mathcal{M}$ is a unital
$\ast$-cross section for $\pi$ on $C^{\ast}\left(  p_{1},p_{2},\ldots
,p_{n}\right)  ,$ then there is a unital $\ast$-cross section $\gamma_{n+1}$
for $\pi$ on $C^{\ast}\left(  p_{1},p_{2},\ldots,p_{n+1}\right)  $ whose
restriction to $C^{\ast}\left(  p_{1},\ldots,p_{n}\right)  $ is $\gamma_{n}$.
\end{theorem}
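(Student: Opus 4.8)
The plan is to build the cross section inductively, so the real content is the inductive step: given a unital $\ast$-cross section $\gamma_n$ on $C^\ast(p_1,\dots,p_n)$, extend it to one on $C^\ast(p_1,\dots,p_{n+1})$. The first statement then follows by an inductive limit argument: set $\gamma_0$ to be the unital inclusion $\mathbb{C}\hookrightarrow\mathcal{B}$, use the inductive step to get a coherent sequence $\gamma_n$, and define $\gamma$ on the dense $\ast$-subalgebra $\bigcup_n C^\ast(p_1,\dots,p_n)$ by $\gamma|_{C^\ast(p_1,\dots,p_n)}=\gamma_n$; this is a well-defined $\ast$-homomorphism on the union because the $\gamma_n$ are compatible, it is contractive hence extends to $C^\ast(p_1,p_2,\dots)$, and $\pi\circ\gamma=\mathrm{id}$ on the dense subalgebra hence everywhere by continuity.

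For the inductive step, write $q_n=\gamma_n(p_i)_{i\le n}$ for the commuting projections in $\mathcal{B}$ lifting $p_1,\dots,p_n$, and let $\mathcal{C}=C^\ast(q_1,\dots,q_n)$, a commutative unital $C^\ast$-subalgebra of $\mathcal{B}$ whose unit is the unit of $\mathcal{B}$. The commutative von Neumann algebra generated by $\mathcal{C}$ contains plenty of projections, but I must lift $p_{n+1}$ to a projection $q_{n+1}\in\mathcal{B}$ that commutes with all of $\mathcal{C}$. The idea is to work inside the relative commutant $\mathcal{B}\cap\mathcal{C}'$, which is again a $C^\ast$-algebra with the same unit; the key point is that $\mathcal{B}\cap\mathcal{C}'$ is again real rank zero. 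Granting that, $\pi$ restricted to $\mathcal{B}\cap\mathcal{C}'$ maps onto the relative commutant $\mathcal{M}\cap\pi(\mathcal{C})'=\mathcal{M}\cap C^\ast(p_1,\dots,p_n)'$ — here one uses that $\pi$ is surjective, that $\gamma_n$ is a cross section so $\pi(\mathcal{C})=C^\ast(p_1,\dots,p_n)$, and a lifting argument (lift an element commuting with $p_1,\dots,p_n$ to something, then average/compress using the $q_i$'s via functional calculus to kill the commutators, which vanish downstairs). Since $p_{n+1}$ lies in this relative commutant downstairs, Proposition \ref{proj} (or rather its proof, the fact that $\mathbb{C}\oplus\mathbb{C}$ is $W^\ast$-$C^\ast$-projective, combined with real rank zero playing the role of von Neumann) applied to the surjection $\mathcal{B}\cap\mathcal{C}'\twoheadrightarrow\mathcal{M}\cap C^\ast(p_1,\dots,p_n)'$ gives a projection $q_{n+1}\in\mathcal{B}\cap\mathcal{C}'$ with $\pi(q_{n+1})=p_{n+1}$. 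Define $\gamma_{n+1}$ on $C^\ast(p_1,\dots,p_{n+1})$ by sending $p_i\mapsto q_i$; since $q_1,\dots,q_{n+1}$ is a commuting family of projections this extends to a unital $\ast$-homomorphism, it restricts to $\gamma_n$, and $\pi\circ\gamma_{n+1}=\mathrm{id}$ by construction.

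One technical point to record in the proof of Proposition \ref{proj}-type step: we do not literally have a von Neumann algebra upstairs but a real rank zero $C^\ast$-algebra, so instead of taking a range projection of $b_1$ inside a von Neumann algebra we use that in a real rank zero algebra the hereditary subalgebra generated by a positive contraction contains an approximate unit of projections and, using the gap in the spectrum of $\pi(b_1)$ around a point strictly between $0$ and $1$, we can choose a spectral projection $p=\chi_{[t,1]}(b_1)$ — wait, $b_1$ need not have a spectral gap; rather, Brown–Pedersen's result (\cite{BP}) is exactly that in a real rank zero algebra projections lift along quotients, so the cleanest route is to cite \cite{BP} for the single-projection lift and only do the relative-commutant bookkeeping ourselves. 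Thus the genuinely new work is: (i) $\mathcal{B}\cap\mathcal{C}'$ is real rank zero when $\mathcal{C}$ is a commutative $C^\ast$-algebra generated by finitely many (equivalently, by finitely many commuting) projections and $\mathcal{B}$ is real rank zero, and (ii) $\pi(\mathcal{B}\cap\mathcal{C}')=\mathcal{M}\cap\pi(\mathcal{C})'$.

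The main obstacle is exactly point (i) together with (ii): controlling the relative commutant under quotients. For (i), since $\mathcal{C}$ is generated by finitely many commuting projections it decomposes the unit into finitely many central-in-$\mathcal{C}$ orthogonal projections $\{f_\alpha\}$ (the atoms of the finite Boolean algebra they generate), and $\mathcal{B}\cap\mathcal{C}'=\bigoplus_\alpha f_\alpha\mathcal{B}f_\alpha$ is a finite direct sum of corners of $\mathcal{B}$; corners of real rank zero algebras are real rank zero and finite direct sums preserve it, so (i) holds. For (ii), the inclusion $\subseteq$ is trivial; for $\supseteq$, given $m\in\mathcal{M}$ commuting with $p_1,\dots,p_n$, lift $m$ to $b\in\mathcal{B}$ arbitrarily and replace $b$ by $\sum_\alpha \tilde f_\alpha b\tilde f_\alpha$ where $\tilde f_\alpha\in\mathcal{C}$ are the atoms lifting the corresponding atoms of $C^\ast(p_1,\dots,p_n)$ — this lands in $\mathcal{B}\cap\mathcal{C}'$ and still maps to $\sum_\alpha (\pi\tilde f_\alpha) m (\pi\tilde f_\alpha)=m$ since $m$ commutes with each $\pi\tilde f_\alpha$ and $\sum_\alpha \pi\tilde f_\alpha=1$. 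Once these two facts are in hand the rest is the routine inductive-limit assembly sketched above.
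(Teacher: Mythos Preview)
Your proof is correct and follows essentially the same strategy as the paper's: decompose the unit via the finitely many atoms $f_\alpha$ of $C^\ast(p_1,\dots,p_n)$, observe that the corresponding corners $f_\alpha\mathcal{B}f_\alpha$ are real rank zero by \cite{BP}, and lift (pieces of) $p_{n+1}$ there using Brown--Pedersen's projection-lifting. Your relative-commutant packaging $\mathcal{B}\cap\mathcal{C}'=\bigoplus_\alpha f_\alpha\mathcal{B}f_\alpha$ together with the explicit surjectivity check (ii) is just a slightly more abstract restatement of the paper's corner-by-corner argument.
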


\begin{proof}
We know that if $0\neq p\neq1$ is a projection in $\mathcal{M}$, there is a
projection $P\in \mathcal{B}$ with $\pi \left(  P\right)  =p.$ Clearly $0\neq
P\neq1$. Suppose $\gamma_{n}:C^{\ast}\left(  p_{1},p_{2},\ldots,p_{n}\right)
\rightarrow \mathcal{M}$ is a unital $\ast$-cross section for $\pi$ on
$C^{\ast}\left(  p_{1},p_{2},\ldots,p_{n}\right)  $. We know $C^{\ast}\left(
p_{1},p_{2},\ldots,p_{n}\right)  $ is generated by an orthogonal family of
projections $\left \{  q_{1},\ldots,q_{m}\right \}  $ whose sum is $1$, and, for
$1\leq k\leq m$, let $Q_{k}=\gamma_{n}\left(  q_{k}\right)  $. Now $p_{n+1}$
commutes with $\left \{  q_{1},\ldots,q_{m}\right \}  $, so $C^{\ast}\left(
p_{1},\ldots,p_{n+1}\right)  $ is generated by the orthogonal family
$$\cup_{k=1}^{m}\left \{  q_{k}p_{n+1}q_{k},q_{k}-q_{k}p_{n+1}q_{k}\right \}  .$$
If $q_{k}p_{n+1}q_{k}=0$ or $q_{k}-q_{k}p_{n+1}q_{k}=0$, there is nothing new
to lift. If $q_{k}p_{n+1}q_{k}\neq0$ and $q_{k}-q_{k}p_{n+1}q_{k}\neq0$, we
need to find a lifting of $q_{k}p_{n+1}q_{k}$ in $Q_{k}\mathcal{B}Q_{k}$.
However, if was proved in \cite{BP} that $Q_{k}\mathcal{B}Q_{k}$ has real rank $0$. Thus such a lifting is
possible.
\end{proof}

As a corollary we get a sufficient condition for $C(K)$ to be $RR0$-projective.

\begin{corollary}
\label{totallydisconnected} If $K$ is a totally disconnected compact metric
space, then $C\left(  K\right)  $ is RR0-projective, and hence W*-C* projective.
\end{corollary}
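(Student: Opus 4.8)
The plan is to reduce the statement to the classical Brown--Pedersen result via the machinery already set up. Recall we must show: if $K$ is a totally disconnected compact metric space, then $C(K)$ is $RR0$-projective. Since every von Neumann algebra has real rank zero, $RR0$-projectivity implies $W^*$-$C^*$-projectivity, so the last clause is automatic.

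First I would use the structure of $C(K)$ for totally disconnected metric $K$. Because $K$ is compact, metric, and totally disconnected, it embeds as a closed subset of the Cantor set, and $C(K)$ is the closure of an increasing union of finite-dimensional commutative $*$-subalgebras; equivalently, $C(K) = C^*(p_1, p_2, \dots)$ for a suitable countable commuting family of projections $\{p_j\}$ (each $p_j$ a characteristic function of a clopen set, the clopen sets generating the topology). This is exactly the input needed for Theorem~\ref{projlift}.

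Next, given a real rank zero $C^*$-algebra $\mathcal B$, a $C^*$-algebra $\mathcal M$, a unital surjective $*$-homomorphism $\pi:\mathcal B\to\mathcal M$, and a unital $*$-homomorphism $\phi:C(K)\to\mathcal M$, I would set $p_j := \phi(\chi_j)$ where $\chi_j$ are the generating projections of $C(K)$; then $\{p_1,p_2,\dots\}$ is a commuting family of projections in $\mathcal M$. By Theorem~\ref{projlift} there is a unital $*$-cross section $\gamma$ for $\pi$ on $C^*(p_1,p_2,\dots) = \phi(C(K))$, built as the inductive limit of the compatible partial cross sections $\gamma_n$ on $C^*(p_1,\dots,p_n)$. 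Then $\psi := \gamma\circ\phi : C(K)\to\mathcal B$ is a unital $*$-homomorphism with $\pi\circ\psi = \pi\circ\gamma\circ\phi = \phi$, since $\pi\circ\gamma$ is the identity on $\phi(C(K))$. This establishes $RR0$-projectivity of $C(K)$, and $W^*$-$C^*$-projectivity follows from the remark that von Neumann algebras have real rank zero.

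The only genuine subtlety — and the reason Theorem~\ref{projlift} is stated the way it is — is ensuring the partial cross sections cohere into a single $*$-homomorphism on the whole (non-closed) algebra $\bigcup_n C^*(p_1,\dots,p_n)$ and then extend by continuity to $C(K)$; this is precisely the ``moreover'' clause of Theorem~\ref{projlift}, which guarantees $\gamma_{n+1}|_{C^*(p_1,\dots,p_n)} = \gamma_n$, so the union is well-defined and contractive, hence extends uniquely to the closure. Everything else is bookkeeping: identifying $C(K)$ with a universal $C^*$-algebra on countably many commuting projections when $K$ is totally disconnected compact metric, which is standard.
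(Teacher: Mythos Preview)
Your proof is correct and follows exactly the route the paper intends: the corollary is presented without proof because it is immediate from Theorem~\ref{projlift} once one observes that $C(K)$, for $K$ totally disconnected compact metric, is generated by a countable commuting family of projections. Your only redundancy is invoking the ``moreover'' clause to build $\gamma$ inductively --- the first sentence of Theorem~\ref{projlift} already hands you the cross section on all of $C^*(p_1,p_2,\dots)$, so you can just compose with $\phi$ and be done.
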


The following corollary uses the fact that if $\gamma:\mathcal{A}%
\rightarrow \mathcal{B}$ is a unital $\ast$-homomorphism and $\mathcal{B}$ is a
von Neumann algebra, then there is an extension to a weak*-weak* continuous
$\ast$-homomorphism $\hat{\gamma}:\mathcal{A}^{**}\rightarrow \mathcal{B}$.

\begin{corollary}
\label{commW*-C*proj} Let $K$ be a compact metric space. The following are equivalent.

\begin{enumerate}
\item $C\left(  K\right)  $ is W*-C* projective

\item Whenever $\mathcal{B}$ is a von Neumann algebra, $\mathcal{M}$ is a
C*-algebra, $\pi:\mathcal{B}\rightarrow \mathcal{M}$ is a surjective $\ast
$-homomorphism, and $\rho:C\left(  K\right)  \rightarrow \mathcal{M}$ is a
unital $\ast$-homomorphism, there is a commutative C*-subalgebra $\mathcal{D}$
of $\mathcal{M}$ that contains $\rho(C(K))$ such that the maximal ideal space
of $\mathcal{D}$ is totally disconnected,

\item Whenever $\mathcal{B}$ is a von Neumann algebra, $\mathcal{M}$ is a
C*-algebra, $\pi:\mathcal{B}\rightarrow \mathcal{M}$ is a surjective $\ast
$-homomorphism, and $\rho:C\left(  K\right)  \rightarrow \mathcal{M}$ is a
unital $\ast$-homomorphism, $\rho$ extends to a $\ast$-homomorphism $\hat
{\rho}:C\left(  K\right)  ^{**}\rightarrow \mathcal{M}$.
\end{enumerate}
\end{corollary}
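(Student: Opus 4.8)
The plan is to prove the cycle $(2)\Rightarrow(1)\Rightarrow(3)\Rightarrow(2)$, leaning on Theorem \ref{projlift} for the first implication and on the double-dual extension property recalled just before the corollary for the second.

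\textbf{$(2)\Rightarrow(1)$.} Fix a von Neumann algebra $\mathcal{B}$, a unital C*-algebra $\mathcal{M}$, a unital surjective $\ast$-homomorphism $\pi:\mathcal{B}\to\mathcal{M}$, and a unital $\ast$-homomorphism $\rho:C(K)\to\mathcal{M}$; I want to lift $\rho$ through $\pi$. By $(2)$ there is a commutative C*-subalgebra $\mathcal{D}=C(E)\subseteq\mathcal{M}$ containing $\rho(C(K))$ with $E$ totally disconnected. Since $K$ is metrizable, $\rho(C(K))$ is separable; approximating a countable dense subset of $\rho(C(K))$ uniformly by finite linear combinations of characteristic functions of clopen subsets of $E$, I obtain a countable family of clopen sets $U_1,U_2,\dots$ with $\rho(C(K))\subseteq C^{*}(\chi_{U_1},\chi_{U_2},\dots)$. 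Setting $p_i=\chi_{U_i}$, this puts $\rho(C(K))$ inside $C^{*}(p_1,p_2,\dots)$ for a commuting family of projections in $\mathcal{M}$. As every von Neumann algebra has real rank zero, Theorem \ref{projlift} supplies a unital $\ast$-cross section $\gamma:C^{*}(p_1,p_2,\dots)\to\mathcal{B}$ for $\pi$, and then $\psi:=\gamma\circ\rho$ is a unital $\ast$-homomorphism with $\pi\circ\psi=\rho$. I expect this to be the step requiring the most care: one must first descend to a \emph{separable} C*-subalgebra generated by a \emph{commuting family of projections} before Theorem \ref{projlift} applies, but no idea beyond that bookkeeping is needed.

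\textbf{$(1)\Rightarrow(3)$.} With $\mathcal{B},\mathcal{M},\pi,\rho$ as in $(3)$, apply $(1)$ to get a unital $\ast$-homomorphism $\psi:C(K)\to\mathcal{B}$ with $\pi\circ\psi=\rho$. Since $\mathcal{B}$ is a von Neumann algebra, $\psi$ extends to a weak*-weak* continuous $\ast$-homomorphism $\hat{\psi}:C(K)^{**}\to\mathcal{B}$, and then $\hat{\rho}:=\pi\circ\hat{\psi}:C(K)^{**}\to\mathcal{M}$ is a $\ast$-homomorphism whose restriction to $C(K)$ is $\pi\circ\psi=\rho$.

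\textbf{$(3)\Rightarrow(2)$.} With $\mathcal{B},\mathcal{M},\pi,\rho$ as in $(2)$, use $(3)$ to extend $\rho$ to $\hat{\rho}:C(K)^{**}\to\mathcal{M}$ and set $\mathcal{D}:=\hat{\rho}(C(K)^{**})$, a commutative C*-subalgebra of $\mathcal{M}$ containing $\rho(C(K))$. Now $C(K)^{**}$ is a commutative von Neumann algebra, hence generated by its (commuting) projections, so its quotient $\mathcal{D}$ is again generated by commuting projections; and a commutative unital C*-algebra generated by projections has totally disconnected maximal ideal space, since its projections are the characteristic functions of clopen sets and these separate points (uniformly approximating a point-separating continuous function by a clopen simple function forces a clopen set containing exactly one of the two given points). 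Thus $\mathcal{D}$ is as required, completing the cycle. The only two points that need a word of justification are those emphasized above: the reduction to a separable projection-generated subalgebra in $(2)\Rightarrow(1)$, and the stability of "totally disconnected spectrum" under commutative quotients in $(3)\Rightarrow(2)$.
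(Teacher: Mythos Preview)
Your proof is correct and follows exactly the approach the paper intends: the corollary is stated without proof, but the preceding sentence singles out the double-dual extension property (your $(1)\Rightarrow(3)$), and its placement right after Theorem~\ref{projlift} signals that this is the engine for $(2)\Rightarrow(1)$, just as you use it. The two points you flagged---reducing to countably many commuting projections via separability of $\rho(C(K))$ and total disconnectedness of $E$, and the fact that a commutative quotient of a projection-generated algebra still has totally disconnected spectrum---are handled correctly.
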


\bigskip

\begin{remark}
Without the separablility assumption on $C\left(  K\right)  $ (i.e., the
metrizability of $K$), it is not generally true that $C\left(  K\right)  $ is
W*-C*-projective whenever $K$ is compact and totally disconnected. For
example, let $\mathcal{A}$ be the universal C*-algebra generated by a mutually
orthogonal family $\left \{  P_{t}:t\in \left[  0,1\right]  \right \}  $ of
projections. The maximal ideal space of $\mathcal{A}$ is the one-point
compactification $K$ of the discrete space $\left[  0,1\right]  $. If we let
$\mathcal{B}=B\left(  \ell^{2}\right)  $ and $\mathcal{M}=B\left(  \ell
^{2}\right)  /\mathcal{K}\left(  \ell^{2}\right)  $ and let $\pi
:\mathcal{B}\rightarrow \mathcal{M}$ be the quotient map, then there is an
injective unital $\ast$homomorphism $\rho:\mathcal{A}\rightarrow \mathcal{M}$,
but there is no injective unital $\ast$-homomorphism from $\mathcal{A}$ to
$\mathcal{B}$ since $B\left(  \ell^{2}\right)  $ does not contain an
uncountable orthogonal family of nonzero projections. Hence $C\left(
K\right)  $ is not W*-C* projective although $K$ is totally disconnected. This
shows that an attempt at a transfinite inductive version of the proof of
Theorem \ref{projlift} is doomed to failure. This also shows that being W*-C*
projective is not closed under arbitrary direct limits, since $\mathcal{A}$ is
the direct limit of the family $\left \{  C^{\ast}\left(  \left \{  P_{t}:t\in
E\right \}  \right)  :E\subseteq \left[  0,1\right]  \text{ is countable}%
\right \}  $. We doubt that $\rho$ can be extended to a $\ast$-homomorphism
from $C\left(  K\right)  ^{**}$ to $\mathcal{M}$, so the equivalence of
$\left(  1\right)  $ and $\left(  3\right)  $ in Corollary \ref{commW*-C*proj}
may conceivably be true.
\end{remark}

\medskip

In the case when $K$ is a Peano continuum (that is  a non-empty compact connected metric space which is locally connected at each point)  there is a necessary condition for $C(K)$ to be $W^*$-$C^*$-projective.

\begin{proposition}\label{Peano} Suppose $K$ is a Peano continuum and $C(K)$ is $W^*$-$C^*$-projective. Then $dim_{\text{cov}}(K) \le 1$ (here $dim_{\text{cov}}$ is the covering dimension).
\end{proposition}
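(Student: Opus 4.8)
The plan is to contrapose: assuming $\dim_{\mathrm{cov}}(K) \geq 2$, I will produce a surjection $\pi : \mathcal{B} \twoheadrightarrow \mathcal{M}$ with $\mathcal{B}$ a von Neumann algebra and a unital $\ast$-homomorphism $\rho : C(K) \to \mathcal{M}$ that admits no lift to $\mathcal{B}$. The natural candidate for the obstruction is a $K$-theoretic or cohomological one transported into a Calkin-type setting, in the spirit of Brown–Douglas–Fillmore. Concretely, I would take $\mathcal{B} = B(H)$ and $\mathcal{M} = \mathcal{C}(H) = B(H)/\mathcal{K}(H)$ the Calkin algebra, with $\pi$ the quotient map. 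Because $C(K)$ is separable and nuclear, a unital $\ast$-homomorphism $\rho : C(K) \to \mathcal{C}(H)$ is (the class of) an extension of $C(K)$ by $\mathcal{K}(H)$, classified by $\mathrm{Ext}(C(K)) \cong \mathrm{Ext}^1$ of $K$ in the sense of BDF / Kasparov; such a $\rho$ lifts to $B(H)$ (i.e., is ``trivial'') precisely when its class in $\mathrm{Ext}(K)$ vanishes. So it suffices to exhibit, for a Peano continuum $K$ of covering dimension $\geq 2$, a nonzero class in $\mathrm{Ext}(K)$, equivalently a nonzero element of $K_0$ or $K^1$ detected by a non-vanishing higher cohomology or $K$-homology group. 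The key classical input is that a compact metric space with $\dim_{\mathrm{cov}}(K) \geq 2$ has nontrivial \v{C}ech cohomology or $K$-homology in degree $\geq 2$ obstructing triviality — for instance via a map of $K$ onto a space (a $2$-disc, or a $2$-sphere-like quotient) on which the Fredholm index obstruction is visibly nonzero.

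The steps, in order, would be: (1) Recall that for $C(K)$ separable, $W^*$-$C^*$-projectivity in particular forces every unital $\rho : C(K) \to \mathcal{C}(H)$ to lift through $B(H) \twoheadrightarrow \mathcal{C}(H)$, since $B(H)$ is a von Neumann algebra — this reduces the problem to showing $\mathrm{Ext}(C(K)) \neq 0$. (2) Invoke the BDF / Kasparov identification $\mathrm{Ext}(C(K)) \cong KK^1(C(K), \mathbb{C}) \cong K_1(K)$ (reduced $K$-homology), with the trivial extensions forming the zero element, and note this is natural in $K$. (3) Use the covering-dimension hypothesis together with local connectedness to find a continuous surjection $f : K \to D^2$ onto the $2$-disc, or more robustly onto a $2$-dimensional Peano continuum carrying a nonzero class; this is where Peano structure (Hahn–Mazurkiewicz-type arguments, as already used for Theorem \ref{C*-W*Comm}) and $\dim_{\mathrm{cov}}(K)\ge 2$ are both used — dimension $\geq 2$ is exactly what lets the image ``see'' $2$-dimensional topology rather than collapsing into a dendrite. (4) Pull back a nontrivial extension of $C(D^2)$ (or of the chosen $2$-dimensional quotient) along $f^*$; by naturality its class in $\mathrm{Ext}(C(K))$ is nonzero, giving a non-liftable $\rho$ and contradicting $W^*$-$C^*$-projectivity.

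The main obstacle I anticipate is step (3): producing a \emph{surjective} continuous map from $K$ onto a genuinely $2$-dimensional target in a way that provably transports a nonzero $\mathrm{Ext}$-class. Mere existence of a degree-$\geq 2$ cohomology class in $K$ does not by itself hand me a convenient quotient map; I may instead need to argue directly with $K$ itself, i.e., show that $\dim_{\mathrm{cov}}(K) \geq 2$ implies $\check{H}^2(K;\mathbb{Z}) \neq 0$ (this uses local connectedness to pass from covering dimension to cohomological dimension and to control \v{C}ech vs.\ singular cohomology on Peano continua), and then feed this into the universal coefficient / Chern-character relation between $\check{H}^{\mathrm{even}}$ and $K_0$ of $K$ to get $K^0(K)$ (hence $\mathrm{Ext}$) nontrivial. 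Managing the possible torsion phenomena and the distinction between \v{C}ech and sheaf cohomology for a general Peano continuum is the delicate point; I would lean on the classical dimension theory of Peano continua (where covering dimension, inductive dimension, and cohomological dimension coincide) to push it through. A secondary, more routine, obstacle is being careful that the ``trivial extension = liftable to $B(H)$'' dictionary is stated in the unital category exactly as needed, which follows from standard BDF bookkeeping.
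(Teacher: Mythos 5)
Your reduction is set up correctly: taking $\mathcal{B}=B(H)$, $\mathcal{M}=B(H)/\mathcal{K}(H)$ and arguing by contraposition is exactly the right frame, and it is also what the paper does. The gap is in the obstruction mechanism. You propose to find a \emph{nonzero class in} $\mathrm{Ext}(C(K))$, i.e., a non-liftable injective (essential) extension, detected by $\check H^2$ or $K$-homology. This cannot work in general: the $2$-disc $D^2$ is a Peano continuum with $\dim_{\mathrm{cov}}=2$, yet it is contractible, so $\check H^2(D^2;\mathbb{Z})=0$ and $\mathrm{Ext}(C(D^2))=0$ (by BDF for planar compacta, or by homotopy invariance). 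Thus your auxiliary claim that $\dim_{\mathrm{cov}}(K)\ge 2$ forces $\check H^2(K)\neq 0$ is false, and there is no ``nontrivial extension of $C(D^2)$'' to pull back in your step (4). The proposition is nevertheless true for $K=D^2$, which shows the obstruction must come from somewhere other than $\mathrm{Ext}(C(K))$.

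The missing idea is to use a \emph{non-injective} homomorphism into the Calkin algebra, which is still required to lift under $W^*$-$C^*$-projectivity but is invisible to $\mathrm{Ext}(C(K))$. The paper quotes [\cite{ChDr}, Prop.\ 3.1]: a Peano continuum of covering dimension $>1$ contains a circle $S^1\subseteq K$. One then composes the restriction map $j:C(K)\to C(S^1)$ with the homomorphism $\rho:C(S^1)\to B(H)/\mathcal{K}(H)$ sending $z$ to the image of the unilateral shift $T$. If $\rho\circ j$ lifted to $\gamma:C(K)\to B(H)$, then for any $f\in C(K)$ restricting to $z$ the operator $\gamma(f)$ would be a \emph{normal} lift of $\pi(T)$; but every lift of $\pi(T)$ is Fredholm of index $-1$, while normal Fredholm operators have index $0$. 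Note the direction of the map: the circle is a \emph{subspace} of $K$ and one pulls back functions, rather than constructing a surjection of $K$ onto a two-dimensional target --- this completely sidesteps the difficulty you flagged in your step (3), which in any case was aimed at producing a class that does not exist for contractible $K$.
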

\begin{proof} Suppose $dim_{\text{cov}} K  > 1$.  Then by [\cite{ChDr}, Prop. 3.1] $K$ contains a circle, $S^1$. Let $j: C(K) \to C(S^1)$ be the restriction map. Let $\pi: B(H) \to B(H)/K(H)$ be the canonical surjection.  Let $T\in B(H)$ be the unilateral shift. Define a $\ast$-homomorphism $\rho: C(S^1) \to B(H)/K(H)$ by sending the identity function $z$ to $\pi(T)$. We claim that $\rho\circ j$ is not liftable. Indeed suppose it lifts to a $\ast$-homomorphism $\gamma: C(K) \to B(H)$. Let $f\in C(K)$ be any preimage of $z\in C(S^1)$ under the map $j$. Since $f$ is normal, $\gamma(f)\in B(H)$ is a normal preimage of $\pi(T)$. Since any preimage of $\pi(T)$ has Fredholm index $-1$ and since any normal Fredholm operator has Fredholm index zero, we come to a contradiction.
\end{proof}

\medskip

Below we give a few non-commutative examples and non-examples of $W^*$-$C^*$-projective  $C^*$-algebras.
The following lemma shows that Murray-von Neumann equivalent projections can
be lifted to Murray von Neumann equivalent projections in the W*-C* case. More
simply, it states that partial isometries can be lifted.

\begin{lemma}
\label{PI} Let $\mathcal{B}$ be a unital C*-algebra, $\mathcal{M}$ be a von
Neumann algebra and  $\pi:\mathcal{M}\rightarrow \mathcal{B}$ be a surjective
$\ast$-homomorphism. Let $v\in \mathcal B$ be a partial isometry with $v^{\ast}v=p$ and
$vv^{\ast}=q$. Let $X\in \mathcal{M}$, $\left \Vert X\right \Vert \leq1$, and let
$P,Q$ be projections in $\mathcal{M}$ such that  $\pi \left(  P\right)
=p,\pi \left(  Q\right)  =q$ and $\pi \left(  X\right)  =v$. If $V$ is the
parial isometry in the polar decomposition of $PXQ$, then $\pi \left(
V\right)  =v$.
\end{lemma}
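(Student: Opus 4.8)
The plan is to exploit the standard fact that the partial isometry $V$ in the polar decomposition of an element $Y$ of a von Neumann algebra lies in the von Neumann subalgebra generated by $Y$ (indeed $V = Y\,|Y|^{-1}$ computed via the Borel functional calculus / strong limits of $Y(Y^*Y + \tfrac1n)^{-1}$), so in particular $V$ is a weak$^*$-limit of polynomials in $Y$ and $Y^*$. Here $Y = PXQ$, so $V$ belongs to the von Neumann algebra generated by $P$, $X$, $Q$. Since $\pi$ is a $\ast$-homomorphism of von Neumann algebras but need not be weak$^*$-continuous, I cannot simply "apply $\pi$ to the limit"; instead I will apply $\pi$ to the defining algebraic relations of $V$ and check that $\pi(V)$ satisfies exactly the relations that pin down $v$ as the polar part of $\pi(PXQ) = pvq = v$ (using $\pi(P)=p$, $\pi(Q)=q$, $\pi(X)=v$, and $p = v^*v$, $q = vv^*$, which give $pvq = v$).

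Concretely, the key steps are: (1) Recall that $V^*V$ is the support projection of $Y^*Y = QX^*PXQ$ and $VV^*$ is the support projection of $YY^*$, and that $V Y^* = |Y| = (Y^*Y)^{1/2}\cdot$(something) — more usefully, $VY^*V = Y^*$... actually the cleanest relations are $V^*V \le Q$, $VV^* \le P$, $Y = VV^*Y = V|Y|$, and $V$ is the unique partial isometry with initial space $\overline{\mathrm{ran}}\,Y^*$ satisfying $Y = V|Y|$. (2) Apply $\pi$: from $PXQ = V|PXQ|$ I get $v = \pi(PXQ) = \pi(V)\,\pi(|PXQ|)$, and $\pi(|PXQ|) = \pi((QX^*PXQ)^{1/2}) = (\pi(QX^*PXQ))^{1/2} = (qv^*pvq)^{1/2} = (v^*v)^{1/2} = p$ (using $qv^* = v^*$, $vq = v$, $v^*pv = v^*vv^*v = p$). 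So $v = \pi(V)\,p$. (3) Similarly $\pi(V^*V) \le \pi(Q) = q$ and $\pi(V^*V) = \pi(V)^*\pi(V)$ is a projection; and from $V = V V^* V$ together with $\pi(VV^*)\le p$ one deduces $\pi(V)$ is a partial isometry with $\pi(V)\pi(V)^* \le p$, $\pi(V)^*\pi(V) \le q$. (4) Now $v = \pi(V)p$ with $\pi(V)$ a partial isometry whose range projection is $\le p$; since $v$ is itself a partial isometry with $vv^* = q$... wait, I want $\pi(V) = v$: from $v = \pi(V)p$ and $p = v^*v$ get $v = \pi(V)v^*v$, so $vv^* = \pi(V)v^*v v^* = \pi(V)v^*$, i.e. $q = \pi(V)v^*$; combined with $\pi(V)^*\pi(V) \le q$ and $v^*v = p$, a short computation (multiply $v = \pi(V)p$ on the left by $\pi(V)^*$ and use $\pi(V)^*\pi(V)\pi(V)^* = \pi(V)^*$) forces $\pi(V) = v$.

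The main obstacle, and the step that needs care, is precisely that $\pi$ is \emph{not} assumed weak$^*$-continuous, so the polar decomposition — an inherently weak$^*$-limit construction — does not transport along $\pi$ for free. The way around it is the one indicated above: translate the polar decomposition of $V$ into finitely many $\ast$-algebraic identities ($Y = V|Y|$, $|Y|^2 = Y^*Y$, $V = VV^*V$, $V^*V \le Q$) that \emph{are} preserved by any $\ast$-homomorphism, and then verify that these identities, together with $\pi(P) = p$, $\pi(Q) = q$, $\pi(X) = v$ and the partial-isometry relations on $v$, uniquely determine $\pi(V)$ to be $v$. One should also note $\|X\|\le 1$ is used only to keep $|PXQ|\le 1$ so the functional calculus stays inside the unit ball, and is otherwise inessential; the genuine content is the uniqueness of the polar part among partial isometries with the correct initial projection.
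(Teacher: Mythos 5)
Your overall strategy --- replacing the weak$^*$-limit construction of the polar part by finitely many $\ast$-algebraic identities ($Y=V|Y|$, $V=VV^*V$, $V^*V\le$ a lifted projection) that any $\ast$-homomorphism preserves --- is the right one, and it is essentially what the paper's proof does. But there is a concrete false step at the heart of your computation. For a partial isometry $v$ with $v^*v=p$ and $vv^*=q$, the correct absorption identities are $qv=v=vp$ and $v^*q=v^*=pv^*$; the identities you invoke, $vq=v$ and $qv^*=v^*$, are false in general (take $v=e_{12}$ in $M_2$, so $p=e_{22}$, $q=e_{11}$: then $vq=e_{12}e_{11}=0$). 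Consequently the linchpin of your plan, $pvq=v$, fails ($pvq=0$ for $v=e_{12}$), $qv^*pvq$ need not equal $v^*v$, and your claim $\pi(|PXQ|)=p$ collapses. As it happens, this mirrors a transposition already present in the printed statement: with $\pi(P)=p=v^*v$ and $\pi(Q)=q=vv^*$ one has $\pi(PXQ)=pvq$, which can be $0$, so the element must be taken to be $QXP$ (lift of the range projection on the left, lift of the initial projection on the right) --- and that is the form actually used when the lemma is applied in the proof of Corollary \ref{M_n}. A careful execution of your own step (2) should have detected this.

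Once the orientation is corrected to $Y=QXP$, your argument closes, and more efficiently than your step (4) suggests: $\pi(|Y|)=\bigl(\pi(Y)^*\pi(Y)\bigr)^{1/2}=(v^*v)^{1/2}=p$, and $V^*V$ is the support projection of $|Y|$, which is dominated by $P$, so $VP=V$; hence $\pi(V)=\pi(V)p=\pi(V)\pi(|Y|)=\pi(Y)=v$. This renders the roundabout manipulations in your step (4) unnecessary and confirms your suspicion that $\|X\|\le1$ is inessential on this route. The paper instead uses the left polar decomposition $Y=(YY^*)^{1/2}V$ and the squeeze $(YY^*)^{1/2}\le P_1\le{}$(lifted projection), where $P_1$ is the range projection of $Y$; that squeeze is the one place the norm bound genuinely enters. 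So: right idea, same family of argument as the paper, but the algebra as written is wrong and must be repaired by interchanging the roles of $P$ and $Q$.
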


\begin{proof}
Let $Y=PXQ$. Then the range projection $P_{1}$ of $Y$ is less than or equal to
$P$ and the range projection of $Y^{\ast}$ is less than or equal to $Q$.
Moreover, $\left(  YY^{\ast}\right)  ^{1/2}\leq P_{1}\leq P$. Also,%
\[
p=\pi \left(  \left(  Y^{\ast}Y\right)  ^{1/2}\right)  \leq \pi \left(
P_{1}\right)  \leq \pi \left(  P\right)  =p\text{.}%
\]
If $Y=\left(  YY^{\ast}\right)  ^{1/2}V$ is the polar decomposition, then%
\[
v=p\pi \left(  V\right)  =\pi \left(  P_{1}V\right)  =\pi \left(  V\right)  .
\]
\end{proof}

\begin{corollary}\label{Toeplitz}
Let $\mathcal{T}$ be the Toeplitz algebra. Then $\mathcal{T }\oplus \mathbb{C}$
is W*-C* projective.
\end{corollary}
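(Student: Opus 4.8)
The plan is to mimic the structure of Theorem \ref{C*-W*Comm}$(7)\Rightarrow(3)$ and the order-zero argument, but now using the lifting tools developed in this section for the $W^*$-$C^*$-case. Recall that the Toeplitz algebra $\mathcal T$ is the universal $C^*$-algebra generated by a single isometry $S$ (with $S^*S=1$), equivalently $C^*(S)$ where $S$ is the unilateral shift; and $\mathcal T$ contains the compacts $\mathcal K$ as an ideal with $\mathcal T/\mathcal K\cong C(S^1)$. A unital $*$-homomorphism from $\mathcal T\oplus\mathbb C$ into a $C^*$-algebra $\mathcal M$ is determined by a pair $(v,r)$ where $v$ is an isometry in a corner $e\mathcal M e$ and $r$ is a projection with $e+r$ dominating the unit appropriately; more cleanly, since $\mathcal T\oplus\mathbb C$ is the unital universal $C^*$-algebra on one element $x$ subject to the relation ``$x^*x$ is a projection and $x^*x\geq xx^*$'' — no, it is cleaner to say: a unital $*$-homomorphism $\mathcal T\oplus\mathbb C\to\mathcal M$ is the same as a choice of a partial isometry $w\in\mathcal M$ with $w^*w$ a projection, $ww^*\le w^*w$, together with the orthogonal complementary projection $1-w^*w$ playing the role of the $\mathbb C$ summand is wrong; rather the $\mathbb C$ summand forces a splitting $1=f_0+f_1$ with $\mathcal T$ mapping into $f_0\mathcal M f_0$. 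So the data is: orthogonal projections $f_0,f_1$ with $f_0+f_1=1$, and an isometry $v\in f_0\mathcal M f_0$.

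First I would set up the lifting problem: let $\mathcal B$ be a von Neumann algebra, $\mathcal M$ a $C^*$-algebra, $\pi\colon\mathcal B\twoheadrightarrow\mathcal M$ a unital surjection, and $\rho\colon\mathcal T\oplus\mathbb C\to\mathcal M$ unital. Extract from $\rho$ the orthogonal projections $f_0,f_1\in\mathcal M$ (with $f_0+f_1=1$) and the isometry $v\in f_0\mathcal M f_0$, i.e. $v^*v=f_0$ and $q:=vv^*\le f_0$. By Proposition \ref{proj} applied to the surjection $\pi$ (a von Neumann algebra upstairs, a $C^*$-algebra downstairs — exactly the $W^*$-$C^*$ setting), the pair of orthogonal projections $(f_0,f_1)$ lifts: there are orthogonal projections $F_0,F_1\in\mathcal B$ with $F_0+F_1=1$ and $\pi(F_i)=f_i$. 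Working inside the von Neumann algebra $F_0\mathcal B F_0$, which surjects onto $f_0\mathcal M f_0$, I next lift the projections $f_0$ (the unit of the corner) and $q=vv^*$ — again by Proposition \ref{proj}, since $q\le f_0$ and $f_0-q$ are orthogonal — to orthogonal projections in $F_0\mathcal B F_0$, obtaining in particular a projection $Q\le F_0$ with $\pi(Q)=q$. Then pick any contraction $X\in F_0\mathcal B F_0$ with $\pi(X)=v$ (possible since $\pi$ restricted to the corner is a surjection of norm $\le 1$ on contractions, by the usual lifting of contractions), and form $Y=QXF_0$; apply Lemma \ref{PI} with $P:=Q$, $Q$(of the lemma)$:=F_0$ — wait, one must check the direction: $v$ has $v^*v=f_0$, $vv^*=q$, so in the notation of Lemma \ref{PI} set $p=f_0$, the ``$q$'' of the lemma $=q$, the lifts being $F_0$ and $Q$; then the partial isometry $V$ in the polar decomposition of $QXF_0$ satisfies $\pi(V)=v$. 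Since $\pi(V^*V)=\pi(f_0)$... one should verify $V^*V=F_0$ and $VV^*=Q$, i.e. that $V$ is a genuine isometry in the corner $F_0\mathcal B F_0$; this holds because range projections in a von Neumann algebra are honest projections and the computation in the proof of Lemma \ref{PI} shows $\pi(V^*V)=f_0=\pi(F_0)$ with $V^*V\le F_0$ — but I need $V^*V=F_0$ exactly, not just modulo $\ker\pi$.

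The main obstacle, then, is precisely getting $V$ to be an isometry onto the corner, not merely a partial isometry lifting $v$: Lemma \ref{PI} as stated only guarantees $\pi(V)=v$, and $V^*V$ is the range projection of $Y^*=F_0X^*Q$, which is $\le F_0$ but a priori strictly smaller. To fix this I would instead choose $X$ more carefully so that $X^*X$ already has range projection $F_0$ — for instance, replace $X$ by an element of the form $X'=X+(F_0-r)$ where $r$ is the range projection of $X^*X$, or better, use that in a von Neumann algebra one can lift the isometry relation directly: since $v^*v=f_0$, lift $v^*v$ to $F_0$ and note $x^*x-F_0\in\ker\pi$ for a contraction lift $x$; then $F_0-x^*x\ge 0$ lies in $\ker\pi\cap(F_0\mathcal B F_0)_+$, and replacing $x$ by $x(x^*x)^{-1/2}$ computed in the von Neumann algebra $F_0\mathcal B F_0$ — which is legitimate once we arrange $x^*x$ invertible in that corner by adding a small multiple of a kernel projection — yields a true isometry lifting $v$. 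Once we have an isometry $V\in F_0\mathcal B F_0\subseteq\mathcal B$, the universal property of $\mathcal T$ gives a $*$-homomorphism $\mathcal T\to F_0\mathcal B F_0$, $S\mapsto V$; combined with $F_1\in\mathcal B$ for the $\mathbb C$ summand, we obtain $\psi\colon\mathcal T\oplus\mathbb C\to\mathcal B$ with $\psi(S\oplus 0)=V$, $\psi(0\oplus 1)=F_1$, and $\pi\circ\psi=\rho$ by construction. I expect the isometry-lifting step (the invertibility/polar-decomposition massaging inside the von Neumann corner) to be the only real content; everything else is bookkeeping with Proposition \ref{proj}, Lemma \ref{PI}, and the universal property of the Toeplitz algebra.
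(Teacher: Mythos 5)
You have put your finger on exactly the right point, and the difficulty you isolate is not a presentational gap that can be patched: it is a genuine obstruction, and Corollary \ref{Toeplitz} as stated is false. First, a simplification of your reduction: you do not actually need $V^*V=F_0$ for a pre-chosen lift $F_0$ of $f_0$. Since $\mathcal{T}\oplus\mathbb{C}$ is the universal unital $C^*$-algebra on a partial isometry $v$ subject to the extra order relation $vv^*\le v^*v$ (this, and not ``partial isometry'' alone, is the correct universal description: the universal unital $C^*$-algebra of an unrestricted partial isometry admits a unital $*$-homomorphism onto $M_2$ sending the generator to $e_{12}$, while $\mathcal{T}\oplus\mathbb{C}$ does not, as $e_{12}e_{12}^*=e_{11}\not\le e_{22}=e_{12}^*e_{12}$), it suffices to produce any partial isometry $V\in\mathcal{B}$ with $VV^*\le V^*V$ and $\pi(V)=v$. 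But even this weaker demand is obstructed. Your proposed repair --- perturb $x$ so that $x^*x$ becomes invertible in the corner $F_0\mathcal{B}F_0$ and set $V=x(x^*x)^{-1/2}$ --- requires adding to $x$ a partial isometry whose initial projection is the spectral projection $E=\chi_{[0,1/2]}(x^*x)$ (which does lie in $\ker\pi$, since $E\le 2(F_0-x^*x)$) and whose range projection sits inside $F_0$ orthogonally to the range of $x$; such a partial isometry exists only if $E$ is Murray--von Neumann subequivalent to $F_0$ minus the range projection of $x$, and this comparison can fail.

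It must fail somewhere, because the statement is false. Take $\mathcal{B}=B(H)$, $\mathcal{M}=B(H)/K(H)$, and let $\rho:\mathcal{T}\oplus\mathbb{C}\rightarrow\mathcal{M}$ be the unital $*$-homomorphism killing $0\oplus\mathbb{C}$ and sending the generating isometry $S\oplus 0$ to the unitary $\pi(S)^*$; this exists by Coburn's theorem, since $\pi(S)^*$ is an isometry in the Calkin algebra. Any lift $\psi$ would give $V=\psi(S\oplus 0)$ with $V^*V=\psi(1\oplus 0)$ a projection, $VV^*\le V^*V$, and with $1-V^*V$ and $1-VV^*$ finite-rank projections (their images under $\pi$ vanish); hence $\mathrm{ind}(V)=\mathrm{rank}(1-V^*V)-\mathrm{rank}(1-VV^*)\le 0$. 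But $\pi(V)=\pi(S^*)$ forces $\mathrm{ind}(V)=\mathrm{ind}(S^*)=+1$, a contradiction --- the same index obstruction the paper itself uses in Proposition \ref{Peano}. For comparison, the paper's own proof of the corollary is the one-line assertion that $\mathcal{T}\oplus\mathbb{C}$ is the universal unital $C^*$-algebra generated by a partial isometry, to be combined with Lemma \ref{PI}; that assertion is incorrect for the reason above, and Lemma \ref{PI} produces a partial isometry lift with no control on the order relation between $VV^*$ and $V^*V$ --- precisely the point your draft worries about. So your instinct was right that the isometry-lifting step is the whole content; the correct conclusion is that it cannot be carried out.
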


\begin{proof}
$\mathcal T \oplus \mathbb C$  is the universal unital $C^*$-algebra generated by $v$ with the relation that $v$ is a
partial isometry.
\end{proof}

\begin{corollary}
\label{M_n} $\mathcal{M}_{n}\left(  \mathbb{C}\right) \oplus \mathbb C$ is  W*-C* projective.
\end{corollary}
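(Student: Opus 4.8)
The plan is to realize $M_n(\mathbb{C}) \oplus \mathbb{C}$ as a universal unital $C^*$-algebra on a finite set of relations that can be lifted one relation at a time using the tools already developed, in the same spirit as the proofs of Corollary \ref{Toeplitz} and Proposition \ref{proj}. Concretely, $M_n(\mathbb{C})\oplus\mathbb{C}$ is the unital universal $C^*$-algebra generated by a system of matrix units $\{e_{ij}\}_{i,j=1}^n$ together with the single extra projection $1 - \sum_i e_{ii}$; equivalently it is generated by one partial isometry $v$ with $v^n = 0$ and $v^*v = vv^*\,$... — actually the cleanest choice is: $M_n \oplus \mathbb{C}$ is the universal unital $C^*$-algebra generated by $n$ mutually orthogonal equivalent projections $f_1,\dots,f_n$ (with $\sum f_i \le 1$) together with partial isometries $v_i$ implementing $f_1 \sim f_i$. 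The unit minus $\sum f_i$ is the summand $\mathbb{C}$.

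The key steps, in order. First I would fix a von Neumann algebra $\mathcal{M}$, a $C^*$-algebra $\mathcal{B}$ (the excerpt's Lemma \ref{PI} is stated with the roles confusingly swapped, but the intended setting is $\mathcal{M}$ a von Neumann algebra surjecting onto a $C^*$-algebra $\mathcal{B}$), a surjection $\pi:\mathcal{M}\twoheadrightarrow\mathcal{B}$, and a unital $\ast$-homomorphism $\rho: M_n\oplus\mathbb{C}\to\mathcal{B}$. This $\rho$ is determined by a system of matrix units $\{e_{ij}\}$ in $\mathcal{B}$ with $\sum_i e_{ii} \le 1$. Second, lift the projection $e_{11}$ to a projection $P_1\in\mathcal{M}$: this uses that von Neumann algebras have real rank zero (cited from \cite{BP} in the excerpt) and that projections lift along surjections from real rank zero algebras, exactly as in the proof of Theorem \ref{projlift}. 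Third, for each $i = 2,\dots,n$, use Lemma \ref{PI} to lift the partial isometry $e_{i1}$ (which satisfies $e_{i1}^*e_{i1} = e_{11}$, $e_{i1}e_{i1}^* = e_{ii}$) to a partial isometry $V_i\in\mathcal{M}$ with $\pi(V_i) = e_{i1}$: pick any norm-$\le 1$ lift $X_i$ of $e_{i1}$, pick the already-constructed $P_1$ and any projection lift $Q_i$ of $e_{ii}$, and take the partial isometry in the polar decomposition of $Q_i X_i P_1$. The crucial point is that $V_i^*V_i = P_1$ for every $i$ with the \emph{same} source projection $P_1$; then set $P_i := V_iV_i^*$. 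Fourth, check orthogonality of the $P_i$: since $\pi(P_iP_j) = e_{ii}e_{jj} = 0$ for $i\ne j$, a priori only approximate orthogonality holds, so I would instead argue directly — $V_i^*V_i = P_1 = V_j^*V_j$ forces $V_j^*V_i$ to be a partial isometry from a subprojection of $P_1$, and one must arrange $V_j^* V_i = 0$. This is the point requiring care.

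The step I expect to be the main obstacle is precisely obtaining a genuinely orthogonal family $\{P_i\}$ and matrix units $\{E_{ij} := V_i V_j^*\}$ in $\mathcal{M}$ from the separately-lifted partial isometries, rather than just a family that is orthogonal modulo $\ker\pi$. The fix is to build the $V_i$ inductively: having constructed $V_2,\dots,V_{k-1}$ with $\sum_{i\le k-1} V_iV_i^*$ a projection orthogonal to nothing yet problematic, replace the naive lift $X_k$ of $e_{k1}$ by $(1 - \sum_{i=2}^{k-1}P_i)\,X_k$ before taking the polar decomposition against $P_1$; since $\pi(\sum_{i=2}^{k-1}P_i) = \sum_{i=2}^{k-1}e_{ii}$ is orthogonal to $e_{kk} = \pi(Q_k)$, one can also absorb $Q_k' := (1-\sum_{i<k}P_i)Q_k(1-\sum_{i<k}P_i)$, and Lemma \ref{PI} applied with source $P_1$ and target $Q_k'$ yields $V_k$ with range projection $P_k \le 1 - \sum_{i<k}P_i$, hence genuinely orthogonal to all previous $P_i$. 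Setting $E_{ij} = V_i V_j^*$ (with $V_1 := P_1$) gives an honest system of matrix units in $\mathcal{M}$ with $\pi(E_{ij}) = e_{ij}$, and $\psi(e_{ij}) := E_{ij}$ (extended by sending the central summand to $1 - \sum_i P_i$) is the desired lift. Finally I would note the alternative quick route: $M_n(\mathbb{C})\oplus\mathbb{C} = \widetilde{M_n(\mathbb{C})}$ in the notation of the excerpt's Proposition on unitizations, so the statement is equivalent to $M_n(\mathbb{C})$ being W*-C*-projective in the non-unital category, and one could instead invoke a non-unital matrix-unit lifting result; but the direct inductive argument above is self-contained given Lemma \ref{PI} and the real-rank-zero projection lifting.
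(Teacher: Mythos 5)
Your strategy (lift $e_{11}$ to a projection, lift each $e_{i1}$ via Lemma \ref{PI}, assemble matrix units $E_{ij}=V_iV_j^{*}$) is the right one and is close to the paper's, but there is a genuine gap at exactly the step you flag as ``crucial'': the claim that $V_i^{*}V_i=P_1$ for every $i$. Lemma \ref{PI} and its proof only give that $V_i^{*}V_i$ is the range projection of $(Q_iX_iP_1)^{*}$, hence merely $V_i^{*}V_i\leq P_1$, and equality fails in general: take $\mathcal{M}=M_2\oplus M_2$ with $\pi$ the projection onto the first summand, $P_1=e_{11}\oplus e_{11}$, $Q_2=e_{22}\oplus e_{22}$, $X_2=e_{21}\oplus 0$; the polar partial isometry of $Q_2X_2P_1$ is $e_{21}\oplus 0$, whose source is $e_{11}\oplus 0$, a proper subprojection of $P_1$. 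Since the defect $P_1-V_i^{*}V_i$ lies in $\ker\pi$ but may vary with $i$, the products $E_{ij}=V_iV_j^{*}$ need not satisfy $E_{ij}E_{jl}=E_{il}$ (this requires all the source projections $V_j^{*}V_j$ to coincide), so your final family is not an honest system of matrix units. Your inductive fix repairs only the orthogonality of the range projections $P_i=V_iV_i^{*}$; it does nothing about the sources. (A smaller issue: $Q_k'=(1-\sum_{i<k}P_i)Q_k(1-\sum_{i<k}P_i)$ need not be a projection, so Lemma \ref{PI} cannot be invoked with it verbatim; one should instead take the polar decomposition of $(1-\sum_{i<k}P_i)X_kP_1$ directly, whose range projection is automatically under $1-\sum_{i<k}P_i$.)

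The missing idea is the compression step in the paper's proof. There the argument is an induction on $n$: after the new partial isometry $V$ for the added column is produced (with $VV^{*}$ possibly a proper subprojection of the previously built corner $E_{11}$, and with $V^{*}V\leq 1-\sum_{k\leq n}E_{kk}$ giving the orthogonality you also arrange), \emph{all previously constructed lifts are cut down}, $F_{1k}=VV^{*}E_{1k}$, so that every row element again has the same (smaller) common corner projection $VV^{*}$; the quotient images are unchanged because $\pi(VV^{*})=\rho(e_{11})$. In your all-at-once setup the analogous repair is to replace $P_1$ by a common subprojection of all the $V_i^{*}V_i$ that still maps to $e_{11}$ and compress every $V_i$ accordingly. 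With that addition your argument goes through and is essentially the paper's; your closing observation that the statement is equivalent to non-unital $W^{*}$-$C^{*}$-projectivity of $M_n(\mathbb{C})$ is also precisely how the paper opens its proof.
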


\begin{proof} It would be equivalent to prove that $\mathcal{M}_{n}\left(  \mathbb{C}\right)$ is W*-C* projective
in the non-unital category.
We will use induction on $n$. The case when $n=1$ amounts to lifting a projection.
Assume the theorem is true for $n$.
Suppose $\mathcal{M}$ is a
von Neumann algebra, $\mathcal{B}$ is a C*-algebra,  $\pi:\mathcal{M}\rightarrow \mathcal{B}$ is a surjective $\ast$-homomorphism.
Suppose $\rho:\mathcal{M}_{n}\left(  \mathbb{C}\right)  \rightarrow
\mathcal{B}$ is a $\ast$-homomorphism.
It follows from the induction assumption that there is a
$\ast$-homomorphism $$\gamma_{0}:C^{\ast}\left(  \left \{  e_{1k}:2\leq k\leq
n\right \}  \right)  \rightarrow \mathcal{M}$$ so that  $\gamma_{0}\left(
e_{jk}\right)  =E_{jk}\ $and $\left(  \pi \circ \gamma_{0}\right)  \left(
e_{1k}\right)  =\rho \left(  e_{1k}\right)  $ for $2\leq k\leq n$. We then have
$E_{1k}E_{1k}^{\ast}=\gamma_{0}\left(  e_{11}\right)  $ for $2\leq k\leq n$.
Choose $X\in \mathcal{M}$ so that $\pi \left(  X\right)  =\rho \left(
e_{1,n+1}\right)  .$ If we replace $X$ with $E_{11}X\left(  1-\sum_{1\leq
k\leq n}E_{kk}\right)  ,$ and  let $V$ be the partial isometry in the polar
decomposition of $E_{11}X\left(  1-\sum_{1\leq k\leq n}E_{kk}\right)  $, we
have from Lemma \ref{PI} that $\pi \left(  V\right)  =\rho \left(
e_{1,n+1}\right)  ,$ and $VV^{\ast}\leq E_{11}$ and $\pi \left(  VV^{\ast
}\right)  =\rho \left(  e_{11}\right)  $. If we replace $E_{1k}$ with
$F_{1k}=VV^{\ast}E_{1k}$ for $2\leq k\leq n$, and define $F_{1,n+1}=V$, we
obtain a representation $\gamma:\mathcal{M}_{n+1}\left(  \mathbb{C}\right)
\rightarrow \mathcal{M}$ with $\gamma \left(  e_{1k}\right)  =F_{1k}$ for $1\leq
k\leq n+1$ such that $\pi \circ \gamma=\rho$.
\end{proof}

\begin{lemma}
\label{directsums} Let $C^*$-algebras $A$  and $ D$ be unital $W^{*}$-$ C^{*}$-projective ($W^{*}$-$W^{*}$-projective respectively) in the non-unital category. Then $A \oplus D$
is $W^{*}$-$C^{*}$-projective ($W^{*}$-$W^{*}$-projective respectively) in the
non-unital category.
\end{lemma}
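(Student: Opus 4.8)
I would deduce this from two coordinated lifting problems, one for $A$ and one for $D$. I carry out the $W^*$-$C^*$ case in detail; the $W^*$-$W^*$ case is verbatim the same, replacing each $C^*$-algebra that occurs as a codomain (or as a corner of one) by a von Neumann algebra. So let $\pi\colon\mathcal M\to\mathcal B$ be a surjective $\ast$-homomorphism with $\mathcal M$ a von Neumann algebra --- so that $\mathcal B$ is unital with unit $\pi(1_{\mathcal M})$ and $\pi$ is unital --- and let $\phi\colon A\oplus D\to\mathcal B$ be a $\ast$-homomorphism in the non-unital category. The structural fact I use is that $A$ and $D$ sit inside $A\oplus D$ as complementary ideals, so the restrictions $\phi_A:=\phi|_{A\oplus 0}$ and $\phi_D:=\phi|_{0\oplus D}$ are $\ast$-homomorphisms with orthogonal ranges, $e:=\phi(1_A\oplus 0)$ and $f:=\phi(0\oplus 1_D)$ are orthogonal projections in $\mathcal B$, $\phi_A(A)\subseteq e\mathcal Be$, and (since $ef=0$) $\phi_D(D)\subseteq f\mathcal Bf\subseteq (1-e)\mathcal B(1-e)$.

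First I would lift $\phi_A$ with no side condition: since $A$ is $W^*$-$C^*$-projective in the non-unital category, there is a $\ast$-homomorphism $\psi_A\colon A\to\mathcal M$ with $\pi\circ\psi_A=\phi_A$. Put $E:=\psi_A(1_A)$, a projection in $\mathcal M$ with $\pi(E)=e$ and $\psi_A(A)\subseteq E\mathcal ME$. Now $(1-E)\mathcal M(1-E)$ is again a von Neumann algebra, $(1-e)\mathcal B(1-e)$ is a $C^*$-subalgebra of $\mathcal B$, and $\pi$ restricts to a surjective $\ast$-homomorphism $\pi'\colon(1-E)\mathcal M(1-E)\to(1-e)\mathcal B(1-e)$; surjectivity holds because any $b\in (1-e)\mathcal B(1-e)$ has a preimage $m\in\mathcal M$ and then $\pi((1-E)m(1-E))=(1-e)b(1-e)=b$. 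Since $\phi_D$ is a $\ast$-homomorphism from $D$ into $(1-e)\mathcal B(1-e)$, applying $W^*$-$C^*$-projectivity of $D$ in the non-unital category to $\pi'$ yields a $\ast$-homomorphism $\psi_D\colon D\to (1-E)\mathcal M(1-E)$ with $\pi'\circ\psi_D=\phi_D$.

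Finally, because $\psi_A(A)\subseteq E\mathcal ME$ and $\psi_D(D)\subseteq (1-E)\mathcal M(1-E)$ have orthogonal ranges, the formula $\psi(a\oplus d):=\psi_A(a)+\psi_D(d)$ defines a $\ast$-homomorphism $A\oplus D\to\mathcal M$, and $\pi(\psi(a\oplus d))=\phi_A(a)+\phi_D(d)=\phi(a\oplus d)$, so $\psi$ is the desired lift. The one point that needs care --- and the only real obstacle --- is the orthogonality of the two lifts: lifting $\phi_D$ independently would only give $\psi_D(1_D)\psi_A(1_A)\in\ker\pi$, since $\pi(\psi_D(1_D)\psi_A(1_A))=fe=0$, and not $\psi_D(1_D)\psi_A(1_A)=0$. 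Lifting $\phi_A$ first and then forcing $\phi_D$ into the complementary corner $(1-E)\mathcal M(1-E)$ removes this, and it works precisely because $W^*$-$C^*$-projectivity of $D$ in the non-unital category is available for the restricted surjection $\pi'$, whose domain and codomain still lie in the relevant classes (a von Neumann corner of a von Neumann algebra, and a corner of $\mathcal B$); this is also where unitality of $A$ and $D$ enters, through the projections $e$ and $f$ that pin down these corners.
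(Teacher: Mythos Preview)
Your proof is correct and close in spirit to the paper's, but the route to orthogonality of the two lifts differs. The paper first lifts the pair of orthogonal projections $p=\phi(1_A\oplus 0)$ and $q=\phi(0\oplus 1_D)$ simultaneously to orthogonal projections $P,Q$ upstairs, invoking Corollary~\ref{totallydisconnected} (i.e.\ $RR0$-projectivity of $C(K)$ for totally disconnected $K$), and then lifts $\phi_A$ into $P\mathcal BP$ and $\phi_D$ into $Q\mathcal BQ$ independently. You instead proceed asymmetrically: lift $\phi_A$ freely, let $E=\psi_A(1_A)$, and then force the lift of $\phi_D$ into the complementary corner $(1-E)\mathcal M(1-E)$. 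This buys you a self-contained argument that uses only the hypothesized $W^*$-$C^*$-projectivity of $A$ and $D$ and the fact that corners of von Neumann algebras are von Neumann algebras, with no appeal to the separate projection-lifting results of Section~4. The paper's version is more symmetric and makes explicit that the obstruction is exactly a commutative lifting problem for two orthogonal projections; yours is slightly leaner. Either way the final assembly $\psi(a\oplus d)=\psi_A(a)+\psi_D(d)$ is the same.
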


\begin{proof}   Suppose $\mathcal{B}$ and $\mathcal{M}$ are von Neumann algebras ($\mathcal{B}$ is a von Neumann algebra in the $W^*-W^*$-case respectively)  and
$\pi:\mathcal{B}\rightarrow \mathcal{M}$ is a surjective  $\ast
$-homomorphism. Let $\phi: A\oplus D \to \mathcal M$ be a $\ast$-homomorphism. Let $$p= \phi(1_A\oplus 0), \; q= \phi(0\oplus 1_D).$$ Define  $\phi_A: A\to pMp$ and $\phi_D: D \to qMq$ by $\phi_A(a) = \phi(a\oplus 0)$ and $\phi_D(d) = \phi(0\oplus d)$ respectively.  It follows from Corollary \ref{totallydisconnected} that we can lift $p, q$ to  projections $P, Q$ in $B$ which are orthogonal to each other. Let $\psi_A: A\to PBP$ and $\psi_D: D\to QBQ$ be lifts of $\phi_A$ and $\phi_D$.
Define lift $\psi$ of $\phi$ by $$\psi(a\oplus d) = \psi_A(a) + \psi_D(d).$$
\end{proof}

Combining this lemma with Corollary \ref{M_n} we obtain the following result.

\medskip

\begin{corollary}
\label{fd}If $\mathcal{A}$ is a finite-dimensional C*-algebra, then
$\mathcal{A}\oplus \mathbb{C}$ is W*-C* projective.
\end{corollary}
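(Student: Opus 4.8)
The plan is to deduce Corollary \ref{fd} directly from the two results immediately preceding it, Corollary \ref{M_n} and Lemma \ref{directsums}, by induction on the number of matrix blocks. Recall that any finite-dimensional $C^*$-algebra $\mathcal{A}$ decomposes as $\mathcal{A} \cong M_{n_1}(\mathbb{C}) \oplus M_{n_2}(\mathbb{C}) \oplus \cdots \oplus M_{n_r}(\mathbb{C})$ for some $r \in \mathbb{N}$ and $n_1, \dots, n_r \in \mathbb{N}$. In view of Proposition (the one relating unital and non-unital categories), proving that $\mathcal{A} \oplus \mathbb{C}$ is W*-C* projective is equivalent to proving that $\mathcal{A}$ is W*-C* projective in the non-unital category, since $\widetilde{\mathcal{A}} = \mathcal{A} \oplus \mathbb{C}$ when $\mathcal{A}$ is unital (and a finite-dimensional $\mathcal{A}$ is unital).

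The first step is to observe that each matrix block $M_{n_j}(\mathbb{C})$ is W*-C* projective in the non-unital category: this is exactly the content of Corollary \ref{M_n}, which asserts $M_{n_j}(\mathbb{C}) \oplus \mathbb{C}$ is W*-C* projective, equivalently $M_{n_j}(\mathbb{C})$ is W*-C* projective in the non-unital category. The second step is an induction on $r$ using Lemma \ref{directsums}: the base case $r=1$ is Corollary \ref{M_n}, and for the inductive step, if $D := M_{n_1}(\mathbb{C}) \oplus \cdots \oplus M_{n_{r-1}}(\mathbb{C})$ is W*-C* projective in the non-unital category and $A := M_{n_r}(\mathbb{C})$ is too, then Lemma \ref{directsums} (whose hypotheses require both summands to be unital and W*-C* projective in the non-unital category, both of which hold) yields that $A \oplus D \cong \mathcal{A}$ is W*-C* projective in the non-unital category. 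Finally, translating back via the unital/non-unital Proposition gives that $\mathcal{A} \oplus \mathbb{C}$ is W*-C* projective.

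There is essentially no obstacle here — this corollary is a routine combinatorial packaging of the two preceding results, and the only mild point of care is making sure the unital versus non-unital bookkeeping in Lemma \ref{directsums} is applied correctly (the lemma is stated for summands that are unital and projective \emph{in the non-unital category}, producing a conclusion again in the non-unital category, so the induction must be run entirely inside the non-unital category and only converted to the unital statement at the very end). A short proof suffices:

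\begin{proof}
Write $\mathcal{A} \cong M_{n_1}(\mathbb{C}) \oplus \cdots \oplus M_{n_r}(\mathbb{C})$. By Corollary \ref{M_n}, each $M_{n_j}(\mathbb{C}) \oplus \mathbb{C}$ is W*-C* projective, so each $M_{n_j}(\mathbb{C})$ is W*-C* projective in the non-unital category. Since each $M_{n_j}(\mathbb{C})$ is unital, repeated application of Lemma \ref{directsums} shows that $\mathcal{A} = M_{n_1}(\mathbb{C}) \oplus \cdots \oplus M_{n_r}(\mathbb{C})$ is W*-C* projective in the non-unital category. As $\mathcal{A}$ is unital, $\widetilde{\mathcal{A}} = \mathcal{A} \oplus \mathbb{C}$, and by the Proposition relating the unital and non-unital categories, $\mathcal{A} \oplus \mathbb{C}$ is W*-C* projective.
\end{proof}
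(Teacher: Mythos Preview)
Your proof is correct and follows exactly the approach the paper intends: the paper states the corollary as an immediate consequence of Lemma \ref{directsums} combined with Corollary \ref{M_n}, and you have carefully unpacked this, including the unital/non-unital bookkeeping via the Proposition in Section 2. There is nothing to add.
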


\begin{remark}
The result in Corollary \ref{fd} cannot be extended to AF-algebras even in the
W*-W* case. Indeed the tracial ultraproduct $%
{\displaystyle \prod \limits_{n\in \mathbb{N}}^{\alpha}}
\left(  \mathcal{M}_{2^{n}}\left(  \mathbb{C}\right)  ,\tau_{2^{n}}\right)  $
with respect to a free ultrafilter $\alpha$, where $\tau_{2^{n}}$ is the
normalized trace on $\mathcal{M}_{2^{n}}\left(  \mathbb{C}\right)  $, is a von
Neumann algebra. Thus $\pi:%
{\displaystyle \prod_{n\in \mathbb{N}}}
\mathcal{M}_{2^{n}}\left(  \mathbb{C}\right)  \rightarrow%
{\displaystyle \prod \limits_{n\in \mathbb{N}}^{\alpha}}
\left(  \mathcal{M}_{2^{n}}\left(  \mathbb{C}\right)  ,\tau_{2^{n}}\right)  $
is a unital surjective $\ast$-homomorphism and the domain and range are both
von Neumann algebras. If $\mathcal{A}$ is the CAR algebra, then it is clear
that there is an embedding $\rho:\mathcal{A}\rightarrow%
{\displaystyle \prod \limits_{n\in \mathbb{N}}^{\alpha}}
\left(  \mathcal{M}_{2^{n}}\left(  \mathbb{C}\right)  ,\tau_{2^{n}}\right)  $.
However, $\mathcal{A}$ is simple and infinite-dimensional, so there is no
embedding  from  $ \mathcal{A} \oplus \mathbb C$ into $%
{\displaystyle \prod_{n\in \mathbb{N}}}
\mathcal{M}_{2^{n}}\left(  \mathbb{C}\right)  $ such that $\rho=\pi \circ \tau$.
\end{remark}

A trace $\psi$ on a unital MF-algebra $\mathcal A$ is called an {\it MF-trace}  if  there is a free ultrafilter $\alpha$ on $\mathbb N$ and a unital
$\ast$-homomorphism $\pi: \mathcal A \to \prod^{\alpha} M_k(\mathbb C)$ to the $C^*$-ultraproduct of matrices, such that $\psi = \tau_{\alpha} \circ \pi$, where $\tau_{\alpha}(\{A_k\}_{\alpha} = \lim_{k\to \alpha} \tau_k(A_k)$  [\cite{all}, Prop.4].

The ideas in the preceding remark easily extend to the following result.

\begin{proposition}
If $\mathcal{A}$ is a unital MF C*-algebra and is  W*-C*-projective, then $\mathcal{A}$ must be RFD.
If the MF-traces are a faithful
set on $\mathcal{A}$, i.e., $\tau \left(  a^{\ast}a\right)  =0$ for every MF
trace implies $a=0$, and if $\mathcal{A}$ is W*-W* projective, then
$\mathcal{A}$ must be RFD.
\end{proposition}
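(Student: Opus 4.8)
The plan is to mimic the construction in the preceding remark. The key point is that being W*-C*-projective (resp. W*-W*-projective) forces, via an appropriate surjection between von Neumann algebras, an embedding of $\mathcal A\oplus\mathbb C$ into an algebra that detects non-trivial RFD obstructions. First I would set up the surjection. Since $\mathcal A$ is a unital MF-algebra, fix a free ultrafilter $\alpha$ on $\mathbb N$ and a \emph{unital} $\ast$-homomorphism $\rho:\mathcal A\to\prod^{\alpha}M_k(\mathbb C)$ into the $C^*$-ultraproduct; in the first assertion I only need that such a $\rho$ exists (this is the definition of MF), in the second assertion I additionally choose $\rho$ so that $\tau_\alpha\circ\rho$ is (essentially) faithful, using that the MF-traces form a faithful set. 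The natural surjection to use is $\pi:\prod_{k}M_k(\mathbb C)\twoheadrightarrow\prod^{\alpha}M_k(\mathbb C)$, the quotient by the ideal of sequences vanishing along $\alpha$; the domain $\prod_k M_k(\mathbb C)$ is a von Neumann algebra, and the $C^*$-ultraproduct $\prod^{\alpha}M_k(\mathbb C)$ is a C*-algebra (for the W*-W* statement one replaces the ultraproduct by the \emph{tracial} ultraproduct $\prod^{\alpha}(M_k,\tau_k)$, which is a von Neumann algebra, and uses the surjection $\prod_k M_k\twoheadrightarrow\prod^{\alpha}(M_k,\tau_k)$).

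Next I would invoke projectivity. Because $\mathcal A$ is W*-C*-projective (resp. W*-W*-projective), Proposition on the unital/non-unital relation together with the hypothesis shows $\mathcal A\oplus\mathbb C$ is W*-C*-projective (resp. W*-W*-projective) in the unital category; applied to the surjection $\pi$ above and to the unital $\ast$-homomorphism $\mathcal A\oplus\mathbb C\to\prod^{\alpha}M_k(\mathbb C)$ sending $(a,\lambda)\mapsto\rho(a)$ (using that $\rho$ is unital, so $\rho(1_{\mathcal A})$ is the unit and $(0,1)\mapsto 0$), we obtain a unital $\ast$-homomorphism $\psi:\mathcal A\oplus\mathbb C\to\prod_k M_k(\mathbb C)$ with $\pi\circ\psi$ equal to the given map. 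The restriction $\psi|_{\mathcal A\oplus 0}$ is then a $\ast$-homomorphism $\mathcal A\to\prod_k M_k(\mathbb C)$, i.e., a sequence of $\ast$-homomorphisms $\psi_k:\mathcal A\to M_k(\mathbb C)$. The only thing left is to argue this family separates points of $\mathcal A$.

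The last step is the separation argument, which is where the two hypotheses diverge. In the W*-C* case: $\psi$ is injective on $\mathcal A\oplus\mathbb C$ because $\pi\circ\psi$ is injective there — indeed $\rho$ is injective since $\mathcal A$ is MF and $\rho$ can be taken faithful as part of the definition of MF-algebra, and the $\mathbb C$-summand is separated by the unit, so the map $(a,\lambda)\mapsto(\rho(a),?)$... more carefully, injectivity of $\pi\circ\psi$ follows because $\mathcal A\oplus\mathbb C$ is unital and $\pi\circ\psi$ kills no nonzero element: on $\mathcal A\oplus 0$ it agrees with $\rho$, which is faithful, and any nonzero $(a,\lambda)$ with $\lambda\ne 0$ is not in $\mathcal A\oplus 0$ so is detected. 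Hence $\psi$ is injective, so $\psi|_{\mathcal A}$ is a faithful sequence of finite-dimensional representations and $\mathcal A$ is RFD. In the W*-W* case one cannot assume $\rho$ is faithful, only that $\tau_\alpha\circ\rho$ is faithful; here instead of demanding injectivity of a single $\rho$ one uses that the MF-traces are a faithful set, chooses countably many MF-traces (or a single faithful one built from them) witnessed by $\ast$-homomorphisms into tracial ultraproducts, assembles them into one map into a product of tracial ultraproducts (still a von Neumann algebra), lifts as above, and concludes that the resulting finite-dimensional representations separate points because they recover a faithful family of traces. I expect the main obstacle to be precisely this last bookkeeping in the W*-W* case: packaging "the MF-traces form a faithful set" into a single surjection of von Neumann algebras to which projectivity can be applied, and then checking that the lifted map is genuinely point-separating rather than merely trace-faithful — the passage from trace-faithfulness of $\pi\circ\psi$ to norm-faithfulness of $\psi$ is automatic since $\psi$ lands in a C*-algebra with a faithful trace (the product of the $\tau_k$'s on $\prod_k M_k$), so in fact this is cleaner than it first appears, and the real content is just organizing the ultrafilters.
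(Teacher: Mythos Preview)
Your overall strategy is exactly what the paper intends (the paper gives no proof and just says the ideas of the preceding remark extend): embed $\mathcal{A}$ into a $C^*$-ultraproduct $\prod^{\alpha}M_k$ (respectively, a tracial ultraproduct or a product of such) using the MF hypothesis, lift through the surjection from $\prod_k M_k$ using projectivity, and conclude that the lift gives a faithful family of finite-dimensional representations. The W*-W* bookkeeping you describe---assembling several MF-traces into one map to a product of tracial ultraproducts and noting that $\prod_k M_k$ carries a faithful trace so trace-faithfulness downstairs forces norm-faithfulness upstairs---is fine.

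There is, however, a genuine wrinkle in your execution: the passage through $\mathcal{A}\oplus\mathbb{C}$ is both unnecessary and handled incorrectly. First, the hypothesis is that the \emph{unital} algebra $\mathcal{A}$ is W*-C*-projective in the unital category; the unital/non-unital proposition does not let you deduce from this that $\mathcal{A}\oplus\mathbb{C}$ is W*-C*-projective (it goes the other way: non-unital projectivity of $\mathcal{A}$ is equivalent to unital projectivity of $\tilde{\mathcal{A}}$). Second, your map $(a,\lambda)\mapsto\rho(a)$ is not injective on $\mathcal{A}\oplus\mathbb{C}$: it kills $(0,1)$, so the sentence ``$\pi\circ\psi$ kills no nonzero element \ldots\ any nonzero $(a,\lambda)$ with $\lambda\neq 0$ \ldots\ is detected'' is simply false. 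The remedy is to drop the $\oplus\,\mathbb{C}$ entirely: since $\mathcal{A}$ is unital and $\rho:\mathcal{A}\to\prod^{\alpha}M_k$ is a unital faithful $\ast$-homomorphism, W*-C*-projectivity of $\mathcal{A}$ applied directly to the surjection $\prod_k M_k\twoheadrightarrow\prod^{\alpha}M_k$ yields a unital lift $\psi:\mathcal{A}\to\prod_k M_k$, and $\psi$ is faithful because $\pi\circ\psi=\rho$ is. That is the whole argument in the first case; the second case is the tracial variant you already outlined.
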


The following result shows that without adding $\mathbb{C}$ as a direct
summand Corollaries \ref{Toeplitz} and \ref{M_n} no longer hold.

\begin{proposition}\label{M_nNonUnital}
$\mathcal{T}$ and $M_{n}(\mathbb{C})$ are not $W^{*}$-$C^{*}$-projective.
\end{proposition}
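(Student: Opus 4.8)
The plan is to produce, for each of the two algebras, one explicit surjection out of a von Neumann algebra together with a unital $*$-homomorphism that provably does not lift. I read the statement as referring to $n\geq 2$, since $M_1(\mathbb{C})=\mathbb{C}$ is obviously $W^*$-$C^*$-projective (a unital $*$-homomorphism out of $\mathbb{C}$ is unique and lifts by sending $1$ to $1$).

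For $M_n(\mathbb{C})$ with $n\geq 2$ I would take $\mathcal B=M_n(\mathbb{C})\oplus\mathbb{C}$, which is finite dimensional and hence a von Neumann algebra, with $\pi\colon\mathcal B\to M_n(\mathbb{C})$ the projection onto the first summand (a unital surjection) and $\rho=\mathrm{id}_{M_n(\mathbb{C})}$. A unital lift $\psi\colon M_n(\mathbb{C})\to\mathcal B$ would have to send $1$ to $1_{\mathcal B}=(1_{M_n},1)$, so composing $\psi$ with the projection onto the second summand would give a unital $*$-homomorphism $M_n(\mathbb{C})\to\mathbb{C}$; this is impossible, since $M_n(\mathbb{C})$ is simple and $n\geq 2$, so any such map would be an embedding into $\mathbb{C}$. (One could equally use $\mathcal B=M_n(\mathbb{C})\oplus M_{n+1}(\mathbb{C})$, since $n\nmid n+1$ forces the absence of a unital embedding $M_n(\mathbb{C})\hookrightarrow M_{n+1}(\mathbb{C})$.) The only thing to check here is that $\mathcal B$ really is a von Neumann algebra, which is automatic in the finite-dimensional case.

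For the Toeplitz algebra $\mathcal T$ I would use the Calkin algebra. Let $H=\ell^2(\mathbb{N})$, $\mathcal B=B(H)$, $\mathcal M=\mathcal C(H)=B(H)/\mathcal K(H)$, and let $\pi$ be the quotient map; this is a unital surjection with a von Neumann algebra upstairs. Let $S\in B(H)$ be the unilateral shift. Since $S^*S=1$ and $SS^*\equiv 1 \pmod{\mathcal K(H)}$, the element $\pi(S^*)$ is a unitary (in particular an isometry) of $\mathcal C(H)$, of Fredholm index $+1$, so by the universal property of $\mathcal T$ there is a unital $*$-homomorphism $\rho\colon\mathcal T\to\mathcal C(H)$ sending the canonical isometry $s$ to $\pi(S^*)$. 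If a unital lift $\psi\colon\mathcal T\to B(H)$ existed, then $V:=\psi(s)$ would satisfy $V^*V=\psi(s^*s)=\psi(1)=1$, so $V$ is an isometry, and $\pi(V)=\pi(S^*)$, i.e.\ $V-S^*\in\mathcal K(H)$; hence $V$ is Fredholm with $\mathrm{ind}(V)=\mathrm{ind}(S^*)=+1$, contradicting the fact that an isometry has $\ker V=0$ and therefore $\mathrm{ind}(V)=-\dim\operatorname{coker}V\leq 0$. The one subtlety — the step that deserves care — is the choice of $S^*$ rather than $S$: the homomorphism sending $s$ to $\pi(S)$ does lift (lift it to $S$ itself, whose index $-1$ causes no obstruction), so it is exactly the positive-index isometry $\pi(S^*)$ that blocks liftability; this is also the structural reason why $\mathcal T$, unlike $\mathcal T\oplus\mathbb{C}$, fails to be $W^*$-$C^*$-projective.
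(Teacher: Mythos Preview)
Your proof is correct and follows essentially the same approach as the paper: for $M_n(\mathbb{C})$ you use the surjection $M_n(\mathbb{C})\oplus\mathbb{C}\twoheadrightarrow M_n(\mathbb{C})$ and the absence of a unital $*$-homomorphism $M_n(\mathbb{C})\to\mathbb{C}$, exactly as the paper does; for $\mathcal{T}$ you use the Calkin quotient and the Fredholm index obstruction to lifting an isometry, which is precisely what the paper's one-line ``an isometry in the Calkin algebra need not lift to an isometry in $B(H)$'' is pointing at. Your version is simply more explicit, and your remark about needing $\pi(S^*)$ rather than $\pi(S)$ is a nice clarification of what the obstruction really is.
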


\begin{proof}
The Toeplitz algebra is not $W^*$-$C^*$-projective, since  an isometry in Calkin algebra need not lift to an isometry in $B(H)$.
$M_n(\mathbb C)$ is not  $W^*-C^*$-projective, because   $M_n(\mathbb C)$ is  a quotient of
$M_n(\mathbb C) \oplus \mathbb C $ and since  $M_n(\mathbb C)$ does not admit any unital $\ast$-homomorphisms to $\mathbb C$, the identity
map on  $M_n(\mathbb C)$ is not liftable.
\end{proof}

\begin{proposition} Suppose $\mathcal{A}$ is a separable unital C*-algebra.

\begin{enumerate}
\item If $Ext\left(  \mathcal{A}\right)  $ is not trivial, then $\mathcal{A}$
is not W*-C* projective

\item If $Ext_{w}\left(  \mathcal{A}\right)  $ is not trivial, then
$ \mathbb C \oplus \mathcal{A}$ is not W*-C* projective.
\end{enumerate}

\end{proposition}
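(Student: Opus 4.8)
The plan is, in both parts, to use the single surjection $\pi\colon B(H)\to\mathcal C(H)=B(H)/\mathcal K(H)$ with $H=\ell^2(\mathbb N)$: since $B(H)$ is a von Neumann algebra and $\mathcal C(H)$ is a C*-algebra, any unital $\ast$-homomorphism into $\mathcal C(H)$ that fails to lift to $B(H)$ immediately witnesses a failure of W*-C* projectivity. So the whole argument is to extract such a map from the nontriviality of $Ext$ (resp.\ $Ext_w$), and the difference between the two parts is exactly the unitality of the obstructing extension.

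For (1): nontriviality of $Ext(\mathcal A)$ provides a unital injective $\ast$-homomorphism $\tau\colon\mathcal A\to\mathcal C(H)$ (a Busby invariant) representing a nonzero class. A $\ast$-homomorphic lift $\sigma\colon\mathcal A\to B(H)$ of $\tau$ would exhibit the corresponding extension as split, hence trivial in $Ext(\mathcal A)$ — this is where one uses that split essential extensions represent the identity, classically via Voiculescu's theorem and the separability of $\mathcal A$. Thus $\tau$ has no lift, and applying the definition of W*-C* projectivity with $\mathcal B=B(H)$, $\mathcal M=\mathcal C(H)$, $\pi$ the quotient map and $\phi=\tau$ yields a contradiction; hence $\mathcal A$ is not W*-C* projective.

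For (2): a nonzero class of $Ext_w(\mathcal A)$ is realized by an injective $\ast$-homomorphism $\tau\colon\mathcal A\to\mathcal C(H)$ that need \emph{not} be unital, and that still admits no $\ast$-homomorphic lift to $B(H)$. Put $q=\tau(1_{\mathcal A})$, a projection in $\mathcal C(H)$; then $\tau(a)=q\,\tau(a)\,q$ is orthogonal to $1-q$, so
\[
\phi(\lambda\oplus a)=\lambda(1-q)+\tau(a)
\]
defines a \emph{unital} $\ast$-homomorphism $\phi\colon\mathbb C\oplus\mathcal A\to\mathcal C(H)$. If $\mathbb C\oplus\mathcal A$ were W*-C* projective, $\phi$ would lift to some $\Psi\colon\mathbb C\oplus\mathcal A\to B(H)$ with $\pi\circ\Psi=\phi$; but then $a\mapsto\Psi(0\oplus a)$ is a $\ast$-homomorphism $\mathcal A\to B(H)$ lifting $\tau$, contradicting the choice of $\tau$. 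Hence $\mathbb C\oplus\mathcal A$ is not W*-C* projective.

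The step I expect to require the most care is the bookkeeping that translates "$Ext(\mathcal A)$ (resp.\ $Ext_w(\mathcal A)$) is nontrivial" into "there is a non-liftable Busby invariant of the appropriate shape" — unital in the first case, not necessarily unital in the second. That change of unitality is precisely what forces a unit to be adjoined in (2): W*-C* projectivity in the unital category only lifts unital $\ast$-homomorphisms, so a non-unital obstructing extension is invisible to the lifting property of $\mathcal A$ itself and must be repackaged as a unital $\ast$-homomorphism on $\mathbb C\oplus\mathcal A$. Everything else is a routine unwinding of the definitions of the extension semigroups and of the lifting property.
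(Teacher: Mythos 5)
Part (1) of your argument coincides with the paper's (which simply declares it obvious): a unital lift of a unital Busby invariant splits the extension, so W*-C* projectivity of $\mathcal{A}$ would force $Ext(\mathcal{A})$ to be trivial. The reduction in part (2) is also the right one and matches the paper's in spirit: package the extension as a unital $\ast$-homomorphism $\phi$ on $\mathbb{C}\oplus\mathcal{A}$, and observe that a lift $\Psi$ of $\phi$ restricts to a (generally non-unital) $\ast$-homomorphism $\gamma(a)=\Psi(0\oplus a)$ lifting $\tau$.

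The gap is in the step you dismiss as ``routine unwinding of the definitions'': the claim that nontriviality of $Ext_{w}(\mathcal{A})$ produces a Busby invariant $\tau$ admitting \emph{no} $\ast$-homomorphic lift at all, unital or not. That is exactly the content of the paper's proof, and it is not definitional. A weakly nontrivial class is represented by a unital injective $\rho:\mathcal{A}\to B(\ell^{2})/\mathcal{K}(\ell^{2})$ that is not weakly equivalent to a trivial (unitally liftable) extension; a priori such a $\rho$ could still possess a non-unital lift $\gamma$, and your contradiction needs to rule this out. The missing argument is: since $\pi(1-\gamma(1))=1-\rho(1)=0$, the projection $1-\gamma(1)$ is finite rank; compressing $\gamma$ to $\gamma(1)(\ell^{2})$ gives a unital injective $\gamma_{0}$, and choosing an isometry $V$ with range $\gamma(1)(\ell^{2})$ (so that $\pi(V)$ is a unitary in the Calkin algebra because $1-VV^{*}$ is finite rank) shows that $\pi(V)^{*}\rho(\cdot)\pi(V)$ lifts to the unital $\ast$-homomorphism $V^{*}\gamma(\cdot)V$. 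Hence $\rho$ is weakly equivalent to a trivial extension, contradicting the choice of its class. Without this compression-by-an-isometry argument, ``$\tau$ admits no lift'' is an unjustified assertion, and it is also the point where the distinction between $Ext$ and $Ext_{w}$ (strong versus weak equivalence, i.e.\ conjugation by a unitary of $B(H)$ versus a unitary of the Calkin algebra) actually gets used.
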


\begin{proof} $\left(  1\right)  $ This is obvious.

$\left(  2\right)  $ Suppose $Ext_{w}\left(  \mathcal{A}\right)  $ is not
trivial. Then there is an injective unital $\ast$-homomorphism $\rho:\mathcal{A}%
  \rightarrow B\left(  \ell^{2}\right)
/\mathcal{K}\left(  \ell^{2}\right)  $ that is not weakly equivalent to the
trivial element in $Ext_{w}\left(  \mathcal{A}\right)  $. Assume, via
contradiction that there is a nonunital $\ast$-homomorphism $\gamma
:\mathcal{A\rightarrow B}\left(  \ell^{2}\right)  $ such that $\pi \circ
\gamma=\rho$. Then $$\pi \left(  1-\gamma \left(  1\right)  \right)
=1-\rho \left(  1\right)  =0.$$ Thus $1-\gamma \left(  1\right)  $ is a
finite-rank projection, and if $\gamma_{0}\left(  A\right)  =\gamma \left(
A\right)  |_{\gamma \left(  1\right)  \left(  \ell^{2}\right)  }$, we have
$\gamma=0\oplus \gamma_{0}$ relative to $\ell^{2}=\ker \gamma \left(  1\right)
\oplus \gamma \left(  1\right)  \left(  \ell^{2}\right)  .$ Since $\rho=\pi
\circ \gamma$ is injective, $\gamma_{0}$ must be injective. Choose an isometry
$V$ in $B\left(  \ell^{2}\right)  $ whose range is $\gamma \left(  1\right)
\left(  \ell^{2}\right)  .$ Then $V^{\ast}\gamma \left(  \cdot \right)  V$ is
unitarily equivalent to $\gamma_{0}$. Thus $\pi \left(  V\right)  $ is unitary
in $B\left(  \ell^{2}\right)  /\mathcal{K}\left(  \ell^{2}\right)  $ and
$\pi(V^{\ast})\rho \left(  \cdot \right)  \pi(V)$ lifts to $V^{\ast}\gamma \left(  \cdot \right)  V = U^{\ast}\gamma_{0}\left(
\cdot \right)  U$ for some unitary $U.$ This means $\rho$ is weakly equivalent
to the trivial element in $Ext_{w}\left(  \mathcal{A}\right)  $, a
contradiction.
\end{proof}

\begin{corollary}
If $n\geq2$, the Cuntz algebra $\mathcal{O}_{n}$ is not W*-C* projective. If
$n\geq3$, $\mathbb{C}\oplus O_{n}$ is not W*-C* projective.
\end{corollary}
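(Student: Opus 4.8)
The plan is to deduce both assertions from the two parts of the preceding Proposition, so that the entire task reduces to exhibiting a non-split unital extension of $\mathcal{O}_n$ for $n\ge 2$ (to make $Ext(\mathcal{O}_n)$ non-trivial) and to computing $Ext_w(\mathcal{O}_n)$ for $n\ge 3$.

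The second assertion is a $K$-theory computation. Since $\mathcal{O}_n$ is separable, nuclear, and satisfies the UCT, $Ext_w(\mathcal{O}_n)\cong K^1(\mathcal{O}_n)$, and by the universal coefficient theorem together with Cuntz's computation $K_0(\mathcal{O}_n)=\mathbb{Z}/(n-1)\mathbb{Z}$, $K_1(\mathcal{O}_n)=0$, one gets $K^1(\mathcal{O}_n)\cong\operatorname{Ext}^1_{\mathbb{Z}}(\mathbb{Z}/(n-1)\mathbb{Z},\mathbb{Z})\cong\mathbb{Z}/(n-1)\mathbb{Z}$, which is non-trivial exactly when $n\ge 3$. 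Feeding this into part (2) of the Proposition yields that $\mathbb{C}\oplus\mathcal{O}_n$ is not W*-C* projective for $n\ge 3$.

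For the first assertion I would instead produce an explicit non-split unital extension, since for $n=2$ the reduced group $K^1(\mathcal{O}_2)$ vanishes and the obstruction is of index type rather than $K$-theoretic. Let $S_1,\dots,S_n$ be the standard Cuntz isometries on $H=\ell^2(\mathbb{N})$, let $T$ be the unilateral shift, and with $\pi\colon B(H)\to\mathcal{C}(H)$ the quotient map set $v_i=\pi(S_i)$ for $i<n$ and $v_n=\pi(S_nT)$. A short computation gives $v_i^*v_i=1$ and $\sum_i v_iv_i^*=\pi\bigl(1-S_n(1-TT^*)S_n^*\bigr)=1$, because $1-TT^*$ has rank one; hence by the universal property of $\mathcal{O}_n$ there is a unital $\ast$-homomorphism $\rho\colon\mathcal{O}_n\to\mathcal{C}(H)$ with $\rho(s_i)=v_i$. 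I then claim $\rho$ has no unital lift. If $\gamma\colon\mathcal{O}_n\to B(H)$ were one, the row operator $W=(\gamma(s_1)\ \cdots\ \gamma(s_n))\colon H^{\oplus n}\to H$ would be a genuine unitary (its columns are isometries with pairwise orthogonal ranges summing to $H$, since $\sum_i\gamma(s_is_i^*)=1$), hence of Fredholm index $0$; but $W$ is a compact perturbation of $V=(S_1\ \cdots\ S_{n-1}\ S_nT)$, which has trivial kernel and range of codimension one, so $\operatorname{ind}V=-1\neq 0$, a contradiction. Thus $Ext(\mathcal{O}_n)$ is non-trivial and part (1) of the Proposition applies for every $n\ge 2$.

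The point requiring care is exactly this index bookkeeping: verifying that the row-operator index is well defined under compact perturbation and that it is the genuine obstruction to a \emph{unital} lift, and being clear that it is $Ext$ (not $Ext_w$) that governs the first assertion — this is why $n=2$ is included there but not in the second assertion, where passing to $\mathbb{C}\oplus\mathcal{O}_n$ and permitting non-unital lifts annihilates precisely the index-type obstruction and leaves only the reduced group $\mathbb{Z}/(n-1)\mathbb{Z}$.
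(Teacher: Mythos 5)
Your proof is correct, but it takes a different route from the paper, which simply quotes Davidson's computations $Ext(\mathcal{O}_n)\cong\mathbb{Z}$ (Th.\ V.6.5) and $Ext_w(\mathcal{O}_n)\cong\mathbb{Z}_{n-1}$ (Th.\ V.6.6) and feeds them into the preceding Proposition. For the first assertion you instead build the generator of $Ext(\mathcal{O}_n)$ by hand: the Busby map $s_i\mapsto\pi(S_i)$ for $i<n$, $s_n\mapsto\pi(S_nT)$, together with the row-operator index argument, directly exhibits a unital $\ast$-homomorphism into the Calkin algebra with no unital lift; this is in fact slightly stronger than what part (1) of the Proposition needs (you never have to know the whole group is $\mathbb{Z}$, only that this one class is non-trivial, and the Fredholm-index computation $\operatorname{ind}V=-1$ is exactly the obstruction). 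For the second assertion you replace the citation by the UCT computation $Ext_w(\mathcal{O}_n)\cong K^1(\mathcal{O}_n)\cong\operatorname{Ext}^1_{\mathbb{Z}}(\mathbb{Z}/(n-1)\mathbb{Z},\mathbb{Z})\cong\mathbb{Z}/(n-1)\mathbb{Z}$, which correctly isolates $n\ge 3$. What the paper's version buys is brevity; what yours buys is self-containedness and a transparent explanation of why $n=2$ survives in the first assertion but not the second (the index-type obstruction is killed precisely when one passes to $\mathbb{C}\oplus\mathcal{O}_n$ and allows non-unital lifts). Your checks are all sound: $v_n^\ast v_n=\pi(T^\ast T)=1$, $\sum_iv_iv_i^\ast=1$ modulo the rank-one projection $S_n(1-TT^\ast)S_n^\ast$, simplicity of $\mathcal{O}_n$ gives injectivity of $\rho$, and invariance of the Fredholm index under compact perturbation closes the argument.
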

\begin{proof} By [\cite{Davidson}, Th. V.6.5] $Ext(O_n) \cong \mathbb Z$ and by [\cite{Davidson}, Th. V. 6.6] $Ext_w(O_n) \cong \mathbb Z_{n-1}$, when $n\ge 2$.
\end{proof}

\begin{remark}
 By Corollary \ref{M_n} and Proposition \ref{M_nNonUnital}, if $n\geq2$, $\mathcal{M}_{n}\left(  \mathbb{C}\right)  $ is not W*-C*
projective, but $\mathbb{C}\oplus \mathcal{M}_{n}\left(  \mathbb{C}\right)  $
is W*-C* projective, and this happily coincides with the fact that
$Ext\left(  \mathcal{M}_{n}\left(  \mathbb{C}\right)  \right)  $ is not
trivial and $Ext_{w}\left(  \mathcal{M}_{n}\left(  \mathbb{C}\right)  \right)
$ is trivial.
\end{remark}

The following is a consequence of the proof of a result of T. Loring and the
second-named author [\cite{LoringShulmanAlgebraicElements}, Th. 9]. It
generalizes Olsen's structure theorem for polynomially
compact operators \cite{Olsen}.

\begin{theorem}
Let $R\geq0$ and $p\in \mathbb{C}\left[  x\right]  $. The universal C*-algebra
generated by $a$ such that $\left \Vert a\right \Vert \leq R$ and $p\left(
a\right)  =0$ is $RR0$-projective and hence $W^{*}$-$C^{*}$-projective.
\end{theorem}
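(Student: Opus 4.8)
The plan is to identify the universal $C^*$-algebra $\mathcal{A}_{R,p}$ generated by $a$ with $\|a\|\le R$ and $p(a)=0$ concretely, and then exhibit it as a finite direct sum of the building blocks already shown to be $RR0$-projective. Since $p(a)=0$ forces the spectrum of $a$ to lie in the zero set $Z(p)=\{\lambda_1,\dots,\lambda_k\}$ of $p$, and since $a$ need not be normal a priori, the first step is to argue that in the universal algebra $a$ decomposes along the roots of $p$. Writing $p(x)=c\prod_{i=1}^k (x-\lambda_i)^{m_i}$, a partial-fraction / spectral-projection argument (this is exactly the content of Olsen's structure theorem and of the Loring--Shulman result cited) shows that $a = \bigoplus_{i=1}^k a_i$ where each $a_i$ satisfies $(a_i-\lambda_i)^{m_i}=0$ and $\|a_i\|\le R$. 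Hence $\mathcal{A}_{R,p} \cong \bigoplus_{i=1}^k \mathcal{A}_i$, where $\mathcal{A}_i$ is the universal unital $C^*$-algebra generated by a nilpotent-of-order-$m_i$ element (after translating by $\lambda_i$) of norm at most $R$.

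Next I would recall the structure of the universal $C^*$-algebra generated by a single element $b$ with $b^{m}=0$ and $\|b\|\le R$: by the cited Loring--Shulman theorem (Th. 9 of \cite{LoringShulmanAlgebraicElements}), together with Olsen's theorem \cite{Olsen}, this algebra is (a unitization of) a finite direct sum of matrix algebras over $C_0$ of subsets of an interval, or more precisely is built from finitely many pieces of the form $M_r(C_0(X))$ with $X$ an interval, plus possibly a $\mathbb{C}$ summand coming from the unitization. The precise identification is the one used in the proof of the cited theorem, and I would quote it rather than rederive it. The upshot is that each $\mathcal{A}_i$, and hence $\mathcal{A}_{R,p}=\bigoplus_i \mathcal{A}_i$, is a finite direct sum of unital $C^*$-algebras each of which is either finite-dimensional or of the form $M_r(C(K))\oplus\mathbb{C}$ with $K$ a totally disconnected (in fact finite or interval-type, handled via real rank zero of the relevant hereditary subalgebras) compact metric space.

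Finally I would assemble $RR0$-projectivity from the pieces already established: $C(K)$ is $RR0$-projective when $K$ is totally disconnected (Corollary \ref{totallydisconnected}), finite-dimensional algebras with a $\mathbb{C}$ summand are $W^*$-$C^*$-projective (Corollary \ref{fd}), and the class is closed under finite direct sums (Lemma \ref{directsums}); one also needs that $M_r$ of an $RR0$-projective algebra is again $RR0$-projective, which follows by the matrix-unit lifting technique of Corollary \ref{M_n} together with Theorem \ref{projlift} applied inside the relevant corners. Since every $\ast$-homomorphic image of a real rank zero algebra has real rank zero and von Neumann algebras have real rank zero, $RR0$-projectivity implies $W^*$-$C^*$-projectivity, giving the last assertion.

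The main obstacle is the first step: showing that in the \emph{universal} $C^*$-algebra the generator $a$ genuinely splits as an orthogonal direct sum over the distinct roots of $p$, i.e.\ that the idempotents obtained from partial fractions of $1/p$ are actually (equivalent to) self-adjoint projections in the universal algebra and are central. This is precisely where Olsen's structure theorem and its generalization in \cite{LoringShulmanAlgebraicElements} do the heavy lifting, so in practice the proof reduces to citing Theorem 9 of \cite{LoringShulmanAlgebraicElements} for the explicit description of the universal algebra and then feeding that description into Corollaries \ref{totallydisconnected}, \ref{M_n}, \ref{fd} and Lemma \ref{directsums}.
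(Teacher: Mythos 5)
The paper gives no written proof of this theorem at all: the entire argument is the sentence preceding the statement, namely that it ``is a consequence of the proof of'' [\cite{LoringShulmanAlgebraicElements}, Th.~9]. So your fallback position --- quote that theorem --- coincides with what the authors do. The problem is that the structural argument you wrap around the citation contains a false step, not merely a hard one. The claimed decomposition $\mathcal{A}_{R,p}\cong\bigoplus_{i=1}^{k}\mathcal{A}_{i}$ over the distinct roots of $p$ fails. Take $p(x)=x^{2}-x$ and $R>1$: your splitting (with $m_{1}=m_{2}=1$) would force the universal generator to be $0\oplus 1$, i.e.\ a self-adjoint projection, so $\mathcal{A}_{R,p}$ would be $\mathbb{C}\oplus\mathbb{C}$; but for $R>1$ there exist non-self-adjoint idempotents of norm at most $R$ (already in $M_{2}$, where such an idempotent generates all of $M_{2}$, which is not a quotient of $\mathbb{C}\oplus\mathbb{C}$). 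The partial-fraction idempotents $e_{i}(a)$ do commute with $a$, but they are not self-adjoint, and the standard similarity turning an idempotent into a projection destroys commutation with the non-normal element $a$; there is no orthogonal splitting of $a$ over the roots inside the universal algebra. The subsidiary structural claim is also wrong: the universal $C^{*}$-algebra of a nilpotent of order $m\geq 2$ and norm at most $R$ has infinite-dimensional irreducible representations, so it is not a finite direct sum of matrix algebras over commutative $C^{*}$-algebras; and Olsen's theorem is a lifting statement modulo the compact operators, not a structure theorem for universal algebras.

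What actually has to be shown is the lifting statement itself: given a unital surjection $\pi:\mathcal{B}\rightarrow\mathcal{M}$ with $\mathcal{B}$ of real rank zero and $\bar{a}\in\mathcal{M}$ with $\|\bar{a}\|\leq R$ and $p(\bar{a})=0$, one must produce $a\in\mathcal{B}$ satisfying the same relations with $\pi(a)=\bar{a}$. This is what the proof of [\cite{LoringShulmanAlgebraicElements}, Th.~9] provides, in the spirit of Olsen: real rank zero upstairs is used to separate the roots of $p$ by genuine projections compatible with the idempotents downstairs, after which one lifts translated nilpotents in the resulting corners. A correct self-contained proof must be carried out at the level of the lift, not at the level of a (nonexistent) direct-sum decomposition of the universal algebra, so as written your argument has a genuine gap even though its final sentence lands on the same citation the paper uses.
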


\section{$W^*$-$W^*$-projectivity}

We begin with the separable unital commutative C*-algebras.

\begin{theorem}
\label{commutative} Every separable unital commutative C*-algebra is RR0-AW*- projective. In particular every separable unital commutative C*-algebra is W*-W*-projective.
\end{theorem}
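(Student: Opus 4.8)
The plan is to reduce the statement about a general separable unital commutative $C^*$-algebra $C(K)$, $K$ compact metric, to the totally disconnected case, which is exactly Corollary \ref{totallydisconnected} (or rather the cross-section machinery in Theorem \ref{projlift}). First I would set up the lifting problem: let $\mathcal{B}$ be a real rank zero $C^*$-algebra, $\mathcal{M}$ an $AW^*$-algebra (in the $W^*$-$W^*$ corollary, just a von Neumann algebra), $\pi:\mathcal{B}\twoheadrightarrow\mathcal{M}$ a unital surjective $\ast$-homomorphism, and $\rho:C(K)\to\mathcal{M}$ a unital $\ast$-homomorphism. The key structural fact I want to exploit is that $\rho(C(K))$ is a separable commutative $C^*$-subalgebra of $\mathcal{M}$, and in an $AW^*$-algebra every maximal abelian self-adjoint subalgebra is generated by its projections; hence $\rho(C(K))$ is contained in a commutative $C^*$-subalgebra of $\mathcal{M}$ generated by a (necessarily countable, by separability-of-the-image considerations) family of commuting projections. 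This is the same move used in the proof of $(7)\Rightarrow(3)$ in Theorem \ref{C*-W*Comm}.

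Next I would invoke Theorem \ref{projlift}: since $\mathcal{B}$ has real rank zero, the commuting family $\{p_1,p_2,\dots\}\subseteq\mathcal{M}$ of projections admits a unital $\ast$-cross section $\gamma:C^*(p_1,p_2,\dots)\to\mathcal{B}$ for $\pi$ (built inductively, extending $\gamma_n$ to $\gamma_{n+1}$ as in the theorem, and taking the direct limit over the nested sequence $C^*(p_1,\dots,p_n)$; the inductive-limit step is legitimate here because the family is countable — contrast the Remark after Corollary \ref{commW*-C*proj}). Then $\psi:=\gamma\circ\rho:C(K)\to\mathcal{B}$ is a unital $\ast$-homomorphism with $\pi\circ\psi=\pi\circ\gamma\circ\rho=\rho$, which is the desired lift. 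For the "in particular" clause one just notes every von Neumann algebra is an $AW^*$-algebra of real rank zero, so the $W^*$-$W^*$ statement is a special case.

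The main obstacle — and the only genuinely delicate point — is justifying that $\rho(C(K))$ lies inside a commutative subalgebra generated by a \emph{countable} family of commuting projections. Separability of $C(K)$ gives separability of $\rho(C(K))$, but one must be careful: a MASA in $\mathcal{M}$ containing $\rho(C(K))$ need not be separable, so I would instead argue as in Theorem \ref{C*-W*Comm}(7)$\Rightarrow$(3): a separable commutative $C^*$-algebra is singly generated, say by $a=a^*$; take a MASA $\mathcal{D}\supseteq\rho(C(K))$, which in the $AW^*$ case is generated by its projections; approximate $a$ by finite linear combinations of those projections; the countably many projections occurring over all approximations generate a separable subalgebra $\mathcal{D}_0$ with $a\in\mathcal{D}_0\subseteq\mathcal{D}$, hence $\rho(C(K))=C^*(a)\subseteq\mathcal{D}_0$, and $\mathcal{D}_0$ is generated by countably many commuting projections. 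Once this countability is in hand, the rest is a routine assembly of Theorem \ref{projlift} plus the direct-limit observation, and I would expect no further difficulties.
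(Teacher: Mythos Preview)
Your proposal is correct and follows essentially the same route as the paper's proof: embed $\rho(C(K))$ in a MASA of the $AW^*$-algebra $\mathcal{M}$, use that this MASA is generated by projections together with separability of $\rho(C(K))$ to extract a countable commuting family $\{p_1,p_2,\dots\}$ with $\rho(C(K))\subseteq C^*(p_1,p_2,\dots)$, apply Theorem~\ref{projlift} to get a $\ast$-cross section $\gamma$, and take $\gamma\circ\rho$ as the lift. The paper is terser about the countability step you flagged as the delicate point---it simply asserts it---so your explicit argument (approximate a single self-adjoint generator by finite linear combinations of projections from the MASA) is a welcome elaboration rather than a deviation.
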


\begin{proof}
Suppose $\mathcal{B}$ is a real rank zero C*-algebra,  $\mathcal{M}$ is an
AW*-algebra and
$\pi:\mathcal{B}\rightarrow \mathcal{M}$ is a surjective unital $\ast
$-homomorphism, and suppose that $\mathcal{A}$ is a separable unital commutative C*-subalgebra
of $\mathcal{M}$. Since $\mathcal{M}$ is an AW*-algebra, every maximal abelian
selfadjoint C*-subalgebra of $\mathcal{M}$ is the C*-algebra generated by its
projections. Since $\mathcal{A}$ is contained in such a maximal algebra and
$\mathcal{A}$ is separable, it follows that there is a countable commuting
family $\left \{  p_{1},p_{2},\ldots \right \}  $ of projections in $\mathcal{M}$
such that $\mathcal{A}\subset C^{\ast}\left(  p_{1},p_{2},\ldots \right)  $. By
Theorem \ref{projlift} there is a $\ast$-cross section $\gamma$ for $\pi$ on
$C^{\ast}\left(  p_{1},p_{2},\ldots \right)  .$ Clearly, the restriction of $g$
to $\mathcal{A}$ is a $\ast$-cross section of $\pi$ for $\mathcal{A}$.
\end{proof}

\begin{theorem}
\label{matrices over algebras} Let $A$ be a unital $C^{*}$-algebra. If $A$
is $W^{*}$-$C^{*}$-projective ($W^{*}$-$W^{*}$-projective respectively) in the
non-unital category, then  for each $n\in \mathbb{N}$, $M_{n}(A)$ is
$W^{*}$-$C^{*}$-projective ($W^{*}$-$W^{*}$-projective respectively) in the
non-unital category.
\end{theorem}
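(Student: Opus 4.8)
The plan is to mimic the structure of the proof of Theorem \ref{MatricesOverCommAlgebras}, but now for a general unital $C^*$-algebra $A$ that is $W^*$-$C^*$-projective (resp. $W^*$-$W^*$-projective) in the non-unital category, and to exploit the fact that $M_n(A)$ is generated by a copy of $M_n$ together with a copy of $A$ whose image commutes with the appropriate corner. Concretely, suppose $\mathcal{B}$ is a von Neumann algebra (and $\mathcal{M}$ is a $C^*$-algebra in the $W^*$-$C^*$ case, or a von Neumann algebra in the $W^*$-$W^*$ case), $\pi:\mathcal{B}\to\mathcal{M}$ is surjective, and $\phi:M_n(A)\to\mathcal{M}$ is a $\ast$-homomorphism (possibly nonunital). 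Write $\{e_{ij}\}$ for the matrix units of $M_n\subseteq M_n(A)$ and identify $A$ with $e_{11}M_n(A)e_{11}$, so that $\phi$ restricts to a $\ast$-homomorphism of $M_n$ into $\mathcal{M}$ and to a $\ast$-homomorphism $\phi_A:A\to \phi(e_{11})\mathcal{M}\phi(e_{11})$ with commuting images in the obvious sense; $\phi$ is recovered from these data by $\phi(a\otimes e_{ij})=\phi(e_{i1})\phi_A(a)\phi(e_{1j})$.

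The key steps, in order: \textbf{(1)} Lift the matrix unit part. By Corollary \ref{M_n} (in the form that $M_n(\mathbb{C})$ is $W^*$-$C^*$-projective in the non-unital category), lift the restriction $\phi|_{M_n}$ to a $\ast$-homomorphism $\psi_0:M_n\to\mathcal{B}$, giving projections $E_{ij}=\psi_0(e_{ij})\in\mathcal{B}$ with $\pi(E_{ij})=\phi(e_{ij})$. \textbf{(2)} Pass to the corner. Set $P=E_{11}$, a projection in $\mathcal{B}$, and consider the corner $P\mathcal{B}P$, which is again a von Neumann algebra, together with the surjective $\ast$-homomorphism $\pi|_{P\mathcal{B}P}:P\mathcal{B}P\to \pi(P)\mathcal{M}\pi(P)=\phi(e_{11})\mathcal{M}\phi(e_{11})$; in the $W^*$-$W^*$ case $\phi(e_{11})\mathcal{M}\phi(e_{11})$ is again a von Neumann algebra. \textbf{(3)} Lift the $A$-part. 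Since $A$ is $W^*$-$C^*$-projective (resp. $W^*$-$W^*$-projective) in the non-unital category, lift $\phi_A:A\to \phi(e_{11})\mathcal{M}\phi(e_{11})$ along this corner surjection to a $\ast$-homomorphism $\psi_A:A\to P\mathcal{B}P$ with $\pi\circ\psi_A=\phi_A$. \textbf{(4)} Assemble. Define $\psi:M_n(A)\to\mathcal{B}$ by $\psi(a\otimes e_{ij})=E_{i1}\psi_A(a)E_{1j}$, extended linearly; check it is a well-defined $\ast$-homomorphism (this is the standard "amplification" computation: $\psi_A(a)$ lives under $P=E_{11}$, and the $E_{ij}$ satisfy the matrix-unit relations, so multiplicativity and $\ast$-preservation follow formally) and that $\pi\circ\psi=\phi$ since $\pi$ respects both pieces.

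I expect the main obstacle to be purely bookkeeping rather than conceptual: one must be careful that $A$ is being used in its \emph{non-unital}-category projectivity, so no unitality of $\phi_A$ is assumed and the corner $P\mathcal{B}P$ need not contain the unit of $\mathcal{B}$ — this is exactly why the hypothesis is phrased in the non-unital category and why Corollary \ref{M_n} is invoked in its non-unital form. A second point requiring a line of justification is that the corner of a von Neumann algebra by a projection is again a von Neumann algebra (hence an admissible "upstairs" algebra), and, in the $W^*$-$W^*$ case, that $\phi(e_{11})\mathcal{M}\phi(e_{11})$ is a von Neumann algebra (admissible "downstairs"); both are standard. Finally, verifying that the formula $\psi(a\otimes e_{ij})=E_{i1}\psi_A(a)E_{1j}$ genuinely defines a $\ast$-homomorphism on all of $M_n(A)=A\otimes M_n$ — rather than just on elementary tensors — is the only routine calculation, and it goes through because $M_n(A)$ is the universal $C^*$-algebra on a copy of $A$ and matrix units $e_{ij}$ with $a\otimes e_{11}$ commuting appropriately, a fact already implicitly used in the analogous Lemma \ref{universal}-type arguments above.
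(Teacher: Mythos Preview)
Your proposal is correct and follows essentially the same strategy as the paper's proof: lift the matrix units via Corollary~\ref{M_n}, pass to the corner $P\mathcal{B}P$ with $P=E_{11}$, use the assumed projectivity of $A$ to lift the corner piece, and then reassemble using the lifted matrix units. The paper carries out the same steps with slightly more explicit diagram-chasing (introducing an auxiliary isomorphism $i:M_n\otimes\phi(e_{11}\otimes A)\to\phi(M_n\otimes A)$ and embeddings $\alpha,\tilde\alpha$), but the resulting lift is exactly your formula $\psi(a\otimes e_{ij})=E_{i1}\psi_A(a)E_{1j}$.
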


\begin{proof} Our proof is a modification of Loring's proof of the
fact that the class of projective $C^*$-algebras is closed under tensoring with matrices
(\cite{LoringBook}).
Let $\phi: M_n\otimes A \to B/I$ be a $\ast$-homomorphism and $B$
(and $B/I$, for the $W^*$-$W^*$-projectivity case) be  a von Neumann
algebra and let $\pi: B\to B/I$ denote the canonical surjection. We
need to prove that $\phi$ lifts. Define $j: M_n \to M_n\otimes A$ by
$$j(T) = T\otimes 1_A$$ and let $\phi_2 = \phi \circ j$. Since by
Corollary \ref{M_n} $M_n$ is $W^*$-$C^*$-projective in the non-unital category, $\phi_2$ lifts to $\psi:
M_n\to B$.
$$\xymatrix {M_n \ar[r]^-{\psi}\ar[d]^-{j} \ar@/_/[dr]^{\phi_2} & B\ar[d]^-{\pi} \\
M_n\otimes A \ar[r]_-{\phi}& B/I}$$ Let $(e_{ij})$ be a matrix unit
in $M_n$. Define a $\ast$-homomorphism $$i: M_n\otimes
\phi(e_{11}\otimes A) \to \phi(M_n\otimes A)$$ by $i(T\otimes
\phi(e_{11}\otimes a)) = \phi(T\otimes a).$ It is obviously
surjective. To see that it is injective, we will use the fact that
an ideal in a tensor product $C^*$-algebra is a tensor product of
ideals. Hence the kernel of $i$ is either $0$ or of the form
$M_n\otimes J$, where $J$ is an ideal in $\phi(e_{11}\otimes A)$.
Let $\phi(e_{11}\otimes a))\in J$. Then for each $T\in M_n$,
$\phi(T\otimes a) = T\otimes  \phi(e_{11}\otimes a) =0$. In
particular, $\phi(e_{11}\otimes a) = 0$. Thus $\phi(e_{11}\otimes
a))=0$ and $J=0$. So $i$ is an isomorphism. Let $$p= \phi_2(e_{11}),
\;P = \psi(e_{11})$$ and let $i_1$ be the inclusion
$\phi(e_{11}\otimes A)\subseteq pB/Ip= PBP/PIP.$ Then the
composition $(id_{M_n}\otimes i_1)\circ i^{-1}\circ \phi: M_n\otimes A \to M_n\otimes
PBP/PIP$ is of the form $id_{M_n}\otimes \gamma$, where $\gamma:
A \to  PBP/PIP$ is defined by $$\gamma(a) =
p\phi(e_{11}\otimes a)p.$$ Since $PBP$ (and $pB/Ip$, for the
$W^*$-$W^*$ -projectivity case) is a von Neumann algebra,  by
$W^*$-$C^*$ ($W^*$-$W^*$) projectivity of $A$, it can be lifted to
$$id_{M_n}\otimes \tilde \gamma: M_n\otimes A \to M_n\otimes PBP.$$
$$\xymatrix { & & & M_n\otimes PBP \ar[d]^{id_{M_n}\otimes \pi|_{PBP}} \\
M_n\otimes A \ar[urrr]^-{id_{M_n}\otimes \tilde
\gamma}\ar[r]_-{\phi}  & \phi(M_n \otimes A) \ar[r]_{i^{-1}} &
M_n\otimes \phi(e_{11}\otimes A) \ar[r]_{id_{M_n}\otimes i_1} & M_n\otimes pB/Ip}$$
Now we are going to  embed $M_n\otimes PBP$ back into $B$ and $ M_n\otimes pB/Ip$ -- back into $B/I$. Define $$\tilde
\alpha: M_n\otimes PBP \to B \; \; \text{and} \; \; \alpha: M_n\otimes p
B/I p \to B/I$$ by $$\tilde \alpha(e_{ij}\otimes PbP) =
\psi(e_{i1})b\psi(e_{1j})$$ and $$\alpha(e_{ij}\otimes p\pi(b)p) =
\phi_2(e_{i1})\pi(b)\phi_2(e_{1j})$$ respectively, for each $b\in
B$. It is straightforward to check that $$\alpha \circ (id_{M_n}\otimes i_1) \circ
i^{-1} \circ \phi  = \phi$$ and that the diagram
$$\xymatrix {M_n\otimes PBP \ar[r]^-{\tilde \alpha}\ar[d]^-{id_{M_n}\otimes \pi|_{PBP}} & B\ar[d]^-{\pi} \\
M_n\otimes pB/Ip \ar[r]_-{\alpha}& B/I}$$ commutes. It follows that
$\tilde \alpha \circ \left(id_{M_n}\otimes \tilde \gamma \right)$ is
a lift of $\phi$.
\end{proof}

\begin{remark}
We did not consider the $C^{*}$-$W^{*}$ case in the theorem because no unital
$C^{*}$-algebra is $C^{*}$-$W^{*}$-projective in the non-unital category.
Otherwise $A\oplus \mathbb{C}$ would be unital and $C^{*}$-$W^{*}$-projective
which would contradict to Proposition \ref{NoProjections} since $A\oplus
\mathbb{C}$ has a non-trivial projection.
\end{remark}

Recall that a $C^*$-algebra is {\it subhomogeneous} if there
is an upper bound for the dimensions of its irreducible representations.

\begin{theorem}
\label{subhomogeneous} Let $A$ be a separable  subhomogeneous $C^{*}$-algebra. Then $A$
is $W^*$- $W^{*}$-projective in the non-unital category.
\end{theorem}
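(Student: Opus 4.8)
The plan is to reduce the subhomogeneous case to the already-established fact (Theorem \ref{matrices over algebras}) that $W^*$-$W^*$-projectivity in the non-unital category passes from a unital algebra $A$ to $M_n(A)$, together with the fact (Lemma \ref{directsums}) that finite direct sums of such algebras remain $W^*$-$W^*$-projective in the non-unital category. So the first step is to recall the structure theory of separable subhomogeneous $C^*$-algebras: such an $A$ has a finite composition series $0 = I_0 \subset I_1 \subset \cdots \subset I_m = A$ whose subquotients $I_k/I_{k-1}$ are (stably) $n_k$-homogeneous, and by the Fell--Tomiyama--Takesaki description each $n_k$-homogeneous algebra is the algebra of sections of a locally trivial $M_{n_k}$-bundle over its (locally compact, metrizable, finite-dimensional) primitive ideal space. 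The point I want to extract is that, after possibly passing to a larger trivial bundle (i.e. viewing $A$ inside $C_0(Y)\otimes M_N$ for a suitable $Y$), a separable subhomogeneous algebra is built by finitely many extensions from pieces of the form $C_0(Y_k\setminus\{pt\}) \otimes M_{n_k}$, or more conveniently, one proves the statement by induction on the length $m$ of the subhomogeneity filtration.

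\textbf{Key steps.} (1) Base case $m=1$: $A$ is homogeneous, hence a section algebra of an $M_n$-bundle. Here I would use that a section algebra of a bundle over a finite-dimensional base is a \emph{corner} (a hereditary, indeed complemented via a projection-valued argument) in $C_0(\text{base})\otimes M_N$ for large $N$; by Theorem \ref{matrices over algebras} and Theorem \ref{commutative} (separable commutative algebras are $W^*$-$W^*$-projective in the non-unital category after passing to unitizations), $C_0(\text{base})\otimes M_N$ is $W^*$-$W^*$-projective in the non-unital category, and I must check this passes to the relevant corner. (2) Inductive step: given $0\to I \to A \to A/I \to 0$ with $I$ subhomogeneous of smaller filtration length and $A/I$ homogeneous, I lift a $\ast$-homomorphism $\phi: A \to \mathcal B/\mathcal J$ (with $\mathcal B$, $\mathcal B/\mathcal J$ von Neumann algebras, $\pi:\mathcal B\to\mathcal B/\mathcal J$ the quotient) by first lifting $\phi|_I$ using the inductive hypothesis, then extending the lift over $A$; the standard device is to lift $\phi|_I$ to $\psi_I: I \to \mathcal B$, pass to the multiplier picture / use that $\psi_I(I)$ generates an ideal whose quotient corner carries the induced map of $A/I$, and then lift that quotient map using the base case. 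This is exactly the pattern by which one shows projectivity-type properties are stable under extensions, but it requires care because $W^*$-$W^*$-projectivity is \emph{not} in general closed under extensions; what saves the argument is that the quotient piece is homogeneous (handled by step 1) and that von Neumann algebras remain von Neumann algebras when we cut by a projection lifted via Corollary \ref{totallydisconnected}.

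\textbf{Main obstacle.} The hard part will be the inductive/extension step, specifically arranging the lift of the ideal part so that the quotient homogeneous piece sees a genuine quotient of a von Neumann algebra (or at least a real-rank-zero-over-von-Neumann situation) to which step 1 applies; naive extension arguments fail because $W^*$-$W^*$-projectivity is not extension-closed. One must exploit the specific geometry: the relevant base spaces are finite-dimensional metrizable, and after the ideal lift the ambient algebra is a corner $P\mathcal B P$ of a von Neumann algebra, hence again von Neumann, so the homogeneous quotient map lands in $P\mathcal B P / P\mathcal J P$ and step 1 applies verbatim. A secondary technical point is the reduction of a section algebra of an $M_n$-bundle over a finite-dimensional base to a corner of $C_0(X)\otimes M_N$: this uses that over a finite-dimensional space the bundle, after adding a complementary bundle, becomes trivial (stable rank / vector bundle comparison), so $A = p(C_0(X)\otimes M_N)p$ for a projection $p\in M(C_0(X)\otimes M_N)$; one then checks that a complemented corner of a $W^*$-$W^*$-projective (non-unital) algebra is again $W^*$-$W^*$-projective (non-unital) by the same projection-lifting-and-conjugation technique used in Lemma \ref{directsums} and Corollary \ref{M_n}. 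I expect the write-up to invoke the Fell/Tomiyama--Takesaki structure theorem and a finite-dimensionality argument for bundle triviality as known facts, and then to run the two-step induction above.
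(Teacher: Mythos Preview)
Your approach is genuinely different from the paper's, and it has real gaps.

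\textbf{What the paper actually does.} The paper's proof is much shorter and avoids bundle theory and extensions entirely. The key idea is to exploit that the \emph{target} $\mathcal{M}$ is a von Neumann algebra: any $\ast$-homomorphism $\phi:A\to\mathcal{M}$ extends canonically to a normal $\ast$-homomorphism $\tilde\phi:A^{**}\to\mathcal{M}$. For a subhomogeneous $A$ one has $A^{**}\cong\bigoplus_{k=1}^{n} M_k(D_k)$ with each $D_k$ a commutative von Neumann algebra. One then chooses, for each $k$, a \emph{separable} commutative $C^*$-subalgebra $E_k\subset D_k$ large enough that $A\subset\bigoplus_k M_k(E_k)\subset A^{**}$. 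Now Theorem~\ref{commutative}, Theorem~\ref{matrices over algebras} and Lemma~\ref{directsums} give a lift of $\tilde\phi|_{\bigoplus M_k(E_k)}$, and restricting to $A$ finishes. No composition series, no bundle trivialisation, no extension argument.

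\textbf{Gaps in your plan.} First, your parenthetical claim that the primitive ideal spaces of the homogeneous subquotients are finite-dimensional is false in general: already $C([0,1]^{\mathbb N})$ is separable $1$-subhomogeneous with infinite-dimensional spectrum. Your bundle-stabilisation step (``after adding a complementary bundle it becomes trivial'') relies precisely on this finite-dimensionality, so the reduction of a homogeneous piece to a corner of $C_0(X)\otimes M_N$ is not justified. Second, and more seriously, the inductive extension step is not an argument but a hope: you correctly note that $W^*$-$W^*$-projectivity is not closed under extensions, yet your sketch (``lift $\phi|_I$, pass to the multiplier picture, lift the quotient map'') does not explain how the two lifts are made compatible. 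Having separate lifts of $\phi|_I$ and of the induced map on $A/I$ does not produce a lift of $\phi$ on $A$; one needs a mechanism (a section, a conditional expectation, or the Busby-invariant machinery under strong hypotheses) to glue them, and nothing in the $W^*$-$W^*$ setting supplies this for free. The paper's bidual trick sidesteps both problems at once: the decomposition of $A^{**}$ is already a \emph{direct sum} (no extensions), and the commutative pieces $D_k$ carry no bundle data whatsoever.
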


\begin{proof}
Suppose $\mathcal{B}$ and $\mathcal{M}$ are von Neumann algebras and
$\pi:\mathcal{B}\rightarrow \mathcal{M}$ is a surjective  $\ast
$-homomorphism. Let $\phi: A \to \mathcal M$ be a $\ast$-homomorphism. If $A$ is non-unital, we can extend $\phi$ to a homomorphism from the unitization of $A$ to $\mathcal M$. It implies that it will be sufficient to prove the theorem in the assumption that $A$ is unital.
Since $\mathcal M \subseteq B(H)$, by the universal property of the second dual
there exists $\tilde \phi: A^{**}\to B(H)$ such that $\tilde \phi|_{A} = \phi$ and $\tilde \pi(A^{**}) = \pi(A)''.$ Hence $\tilde \phi$ is a $\ast$-homomorphism from $A^{**}$ to $M$. It can be easily deduced from some well-known properties of subhomogeneous algebras (see for instance \cite {TanyaOtogo}, Lemmas 2.3 and  2.4) that $A^{**}$ can be written as $$A^{**} = \oplus_{k=1}^n M_k(D_k),$$ where $D_k, k\le n,$ are abelian von Neumann algebras.   Let $\pi_k: A^{**}\to M_k(D_k)$ be the projection on the k-th summand. Let $$F_k = \{b\in D_k\;|\; \exists a\in A \; \text{such that b is a matrix element
of}\; \pi_k(a)\}, $$ for each $k\le n$. Let $E_k$ denote the $C^*$-subalgebra of $D_k$ generated by $F_k$, for each $k\le n$. Then each $E_k$ is separable and $A\subseteq \oplus_{k=1}^n M_k(E_k) \subseteq A^{**}$.
By Theorem \ref{commutative}, Theorem \ref{matrices over algebras} and Lemma \ref{directsums}, $\tilde \phi|_{\oplus_{k=1}^n M_k(E_k)}$ lifts to some $\ast$-homomorphism $\psi: \oplus_{k=1}^n M_k(E_k) \to \mathcal B$.  The restriction of $\psi$ onto $A$ is a lift of $\phi$.
\end{proof}

The following are easy observations.

\begin{proposition}
Suppose $\mathcal{A}$ is a separable unital $W^*$-$W^*$- projective C*-algebra.

1) If $\mathcal{A}$ is nuclear and has a faithful trace, then it must be RFD.

2) If Connes' embedding problem has an affirmative answer, then every unital
$W^*$-$W^*$-projective $C^*$-algebra with a faithful trace is RFD.
\end{proposition}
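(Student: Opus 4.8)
The plan is to exploit the fact, established in the literature, that a separable unital nuclear $C^*$-algebra with a faithful amenable trace which is moreover $W^*$-$W^*$-projective must factor through the surjection onto a matricial tracial ultraproduct, and that this forces residual finite-dimensionality. More concretely, let $\tau$ be a faithful trace on $\mathcal A$. In the nuclear case (part 1) every trace is amenable (hyperlinear), so the GNS von Neumann algebra $N=\pi_\tau(\mathcal A)''$ embeds in a tracial ultrapower $R^\omega$ of the hyperfinite $II_1$-factor; since $R$ is approximately finite-dimensional, $R^\omega$ is a quotient of $\prod_{k}^{\alpha}(M_{n_k},\tau_{n_k})$ for a suitable ultrafilter, and in fact we obtain a surjective unital $\ast$-homomorphism $\pi:\prod_k M_{n_k}\twoheadrightarrow \prod_k^{\alpha}(M_{n_k},\tau_{n_k})$ with a unital $\ast$-homomorphism $\rho:\mathcal A\to \prod_k^{\alpha}(M_{n_k},\tau_{n_k})$ (the composition $\mathcal A\to N\hookrightarrow R^\omega$) which is \emph{faithful} because $\tau$ is faithful. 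The domain and codomain of $\pi$ are both von Neumann algebras.

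Now the hypothesis of $W^*$-$W^*$-projectivity applies: since $\prod_k M_{n_k}$ and $\prod_k^{\alpha}(M_{n_k},\tau_{n_k})$ are von Neumann algebras and $\pi$ is a unital surjection, the unital $\ast$-homomorphism $\rho$ lifts to a unital $\ast$-homomorphism $\widetilde\rho:\mathcal A\to \prod_k M_{n_k}$ with $\pi\circ\widetilde\rho=\rho$. Because $\rho$ is faithful, $\widetilde\rho$ is faithful. Composing $\widetilde\rho$ with the coordinate projections $\prod_k M_{n_k}\to M_{n_k}$ produces a separating family of finite-dimensional representations of $\mathcal A$, which is exactly the statement that $\mathcal A$ is RFD. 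This proves part 1. (This is essentially the argument used in the Remark following Corollary \ref{fd}, applied to an arbitrary nuclear $\mathcal A$ with faithful trace rather than to the CAR algebra; I would phrase it as ``the ideas in the preceding remark extend as follows''.)

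For part 2, the only point that changes is the input embedding. Without nuclearity we cannot assume $\tau$ is amenable a priori; but if Connes' embedding problem has an affirmative answer, then \emph{every} $II_1$-factor — in particular $N=\pi_\tau(\mathcal A)''$ — embeds in $R^\omega$ in a trace-preserving way. The rest of the argument is verbatim the same: we get a faithful unital $\rho:\mathcal A\to\prod_k^{\alpha}(M_{n_k},\tau_{n_k})$, lift it by $W^*$-$W^*$-projectivity to a faithful $\widetilde\rho:\mathcal A\to\prod_k M_{n_k}$, and read off an RFD structure. One small technical matter: one must check the trace-preserving embedding of $N$ into $R^\omega$ restricts on $\mathcal A$ to something faithful, which is immediate since the embedding is injective and $\pi_\tau$ is faithful by faithfulness of $\tau$; and one should note $R^\omega$ is indeed a quotient of a genuine (norm) product $\prod_k M_{n_k}$ via the norm-bounded representative sequences, so that the surjection lives in the category of $C^*$-algebras with von Neumann targets and domains.

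I expect the main obstacle to be purely expository: making precise the passage from ``$N$ embeds in $R^\omega$'' to ``$\mathcal A$ maps faithfully into a matricial tracial ultraproduct which is a quotient of an honest product of matrix algebras by a $\ast$-homomorphism between von Neumann algebras,'' i.e. being careful that the relevant surjection $\prod_k M_{n_k}\twoheadrightarrow \prod_k^{\alpha}(M_{n_k},\tau_{n_k})$ is exactly of the type to which $W^*$-$W^*$-projectivity applies (both sides von Neumann algebras, unital surjective $\ast$-homomorphism). There is no hard analysis; the content is the combination of Connes-embedding-type facts with the lifting hypothesis, exactly parallel to the CAR-algebra remark above, and I would keep the write-up to a few lines, referring back to that remark for the mechanics.
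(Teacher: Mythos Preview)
Your proposal is correct and matches the paper's intended argument: the paper actually offers no proof at all, labeling the proposition an ``easy observation,'' but the mechanism you describe---embedding $\mathcal A$ faithfully into a matricial tracial ultraproduct via amenability of the trace (nuclear case) or via Connes' embedding (general case), then lifting through the von Neumann surjection $\prod_k M_{n_k}\twoheadrightarrow \prod_k^{\alpha}(M_{n_k},\tau_{n_k})$ by $W^*$-$W^*$-projectivity---is exactly the argument foreshadowed by the Remark after Corollary~\ref{fd} and by the paper's earlier discussion of this surjection. One cosmetic point: in part~(2) you call $N=\pi_\tau(\mathcal A)''$ a $II_1$-factor, but it is only a finite von Neumann algebra with separable predual; the Connes embedding problem in the form needed here is the (equivalent) statement for such algebras, so the argument goes through unchanged.
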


The opposite to the previous proposition is not true. Indeed in
\cite{TracialStability} we constructed a nuclear RFD $C^{*}$-algebra which is
not matricially tracially stable and hence is not $W^{*}-W^{*}$-projective.
Below we give an example which is not only nuclear but even AF. Our arguments
of why it is not matricially tracially stable are much simpler than the ones
in \cite{TracialStability}.

\begin{theorem}\label{AF}
There exists an AF RFD $C^{*}$-algebra which is not matricially tracially
stable and hence is not $W^{*}$-$W^{*}$-projective (in both unital and
non-unital categories).
\end{theorem}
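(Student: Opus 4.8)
The plan is to build the algebra as an AF inductive limit of finite-dimensional algebras in which a sequence of projections approximately implements ``halving,'' so that any tracial-matrix approximation would force the existence of a projection of trace exactly $1/2$ that cannot actually be found, yet RFD-ness can be arranged by keeping the connecting maps unital and diagonal. Concretely, I would take $\mathcal{A}_m = M_{3^m}(\mathbb{C}) \oplus M_{3^m}(\mathbb{C})$ (or a similar two-block model) with connecting $\ast$-homomorphisms $\mathcal{A}_m \to \mathcal{A}_{m+1}$ chosen so that, in the limit $\mathcal{A} = \varinjlim \mathcal{A}_m$, there is a projection $e$ with $\tau(e) = 1/2$ for \emph{one} trace $\tau$ but with no projection of trace $1/2$ surviving under the ``doubling'' relation that the connecting maps encode. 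The first step is to write down the connecting maps with explicit multiplicities so that the $K_0$-datum (a dimension group, here a subgroup of $\mathbb{Q}^2$ with a specified order unit) has a state taking an inconvenient value, and to verify the limit is unital, simple-or-not as needed, and AF; RFD is automatic for AF algebras whose building blocks are matrix algebras, so I would state that explicitly and move on.

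The core of the argument is the failure of matricial tracial stability. Here I would exhibit an explicit surjection $\prod_{n} M_{k_n}(\mathbb{C}) \twoheadrightarrow \prod_n^{\alpha}(M_{k_n}(\mathbb{C}), \mathrm{tr}_{k_n})$ and a $\ast$-homomorphism $\mathcal{A} \to \prod_n^{\alpha}(M_{k_n}, \mathrm{tr}_{k_n})$ that does not lift. The $\ast$-homomorphism is obtained from the chosen MF-trace (or an honest trace realized through the tracial ultraproduct of matrices), and non-liftability comes from the following obstruction: a lift would produce, in $\prod_n M_{k_n}(\mathbb{C})$, a genuine sequence of projections $(p_n)$ with $\mathrm{tr}_{k_n}(p_n) \to_\alpha 1/2$ related by the connecting-map relations, which — because each $M_{k_n}(\mathbb{C})$ has only finitely many projection traces $0, 1/k_n, \ldots, 1$ and because the doubling relation is exact, not approximate, in $\prod_n M_{k_n}$ — forces $\mathrm{tr}_{k_n}(p_n)$ to be \emph{exactly} $1/2$ for $\alpha$-many $n$, impossible when the $k_n$ are chosen odd (say $k_n = 3^n$). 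I would phrase the relation so that it is a finite set of $\ast$-polynomial identities among finitely many elements of $\mathcal{A}$ (coming from a fixed finite stage $\mathcal{A}_m$ and the generators of the cone/projection it forces), so that it genuinely transfers across the surjection $\prod M_{k_n} \twoheadrightarrow \prod^\alpha(M_{k_n}, \mathrm{tr}_{k_n})$ up to Hilbert--Schmidt error, and then argue the error cannot be absorbed.

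Finally I would assemble the pieces: RFD-ness of $\mathcal{A}$ (AF with matrix blocks), the explicit non-lifting $\ast$-homomorphism to a tracial ultraproduct of matrices, and the conclusion that $\mathcal{A}$ is not matricially tracially stable and hence — by the reduction recorded in the introduction, where matricial tracial stability is exactly the lifting property for $\prod M_n \twoheadrightarrow \prod^\alpha(M_n, \mathrm{tr}_n)$ and the target is a von Neumann algebra — not $W^*$-$W^*$-projective; the non-unital case follows from the unital case via the reduction $\mathcal{A} \rightsquigarrow \tilde{\mathcal{A}}$ of the opening Proposition. The step I expect to be the main obstacle is designing the connecting maps (equivalently, the dimension group and order unit) so that simultaneously (a) the limit is honestly AF and RFD, (b) there is a trace realizing the ``bad'' value $1/2$ so that a $\ast$-homomorphism into the tracial ultraproduct exists, and (c) the defining relation that a lift would have to satisfy is \emph{rigid enough} — a finite system of exact identities whose only matrix solutions require a projection of trace exactly $1/2$ — that the Hilbert--Schmidt slack in the ultraproduct genuinely cannot accommodate it; getting all three at once, rather than an approximate version of (c) that a perturbation could fix, is the delicate part.
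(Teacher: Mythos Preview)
Your proposal has two genuine gaps. First, the assertion that ``RFD is automatic for AF algebras whose building blocks are matrix algebras'' is false: every AF algebra has matrix building blocks, yet any simple infinite-dimensional AF algebra (e.g.\ the CAR algebra) has no nonzero finite-dimensional representations at all. RFD-ness of an AF algebra is a genuine constraint on the Bratteli diagram / dimension group, and nothing in your sketch of the connecting maps $M_{3^m}\oplus M_{3^m}\to M_{3^{m+1}}\oplus M_{3^{m+1}}$ guarantees it. Second, and more seriously, your obstruction mechanism does not work. If $\psi:\mathcal{A}\to\prod_n M_{k_n}$ is a lift and $e\in\mathcal{A}$ is a projection with $\tau(e)=1/2$, then $\psi(e)=(p_n)_n$ is a sequence of projections, but the only constraint coming from the lifting diagram is $\mathrm{tr}_{k_n}(p_n)\to_\alpha 1/2$; nothing in the $\ast$-algebraic relations of $\mathcal{A}$ can force $\mathrm{tr}_{k_n}(p_n)$ to equal $1/2$ \emph{exactly} for individual $n$, because the normalized trace is not part of the algebraic structure and the coordinate maps $\psi_n:\mathcal{A}\to M_{k_n}$ are arbitrary finite-dimensional representations. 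With $k_n$ odd one can perfectly well have $\mathrm{tr}_{k_n}(p_n)=(k_n\pm 1)/(2k_n)\to 1/2$. The ``doubling relation'' you allude to lives in $\mathcal{A}$, not in the target, and is satisfied exactly by $\psi$ simply because $\psi$ is a $\ast$-homomorphism; it places no coordinate-wise trace constraint.

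The paper's construction is quite different and avoids both problems. It builds $\mathcal{C}$ explicitly as a concrete $C^*$-subalgebra of $\prod_n M_{(n+1)s_n}(\mathbb{C})$ containing the ideal $\mathcal{J}=\bigoplus_n M_{(n+1)s_n}(\mathbb{C})$, with $\mathcal{C}/\mathcal{J}\cong M_{2^\infty}\oplus M_{2^\infty}$; RFD is then immediate from the embedding, and AF follows from the extension $0\to\mathcal{J}\to\mathcal{C}\to\mathcal{C}/\mathcal{J}\to 0$. For the failure of matricial tracial stability the paper does not argue via liftings directly but via the trace-closure criterion of \cite[Th.~3.10]{TracialStability}: because $\mathcal{J}\subset\mathcal{C}$, every irreducible finite-dimensional representation of $\mathcal{C}$ is a coordinate projection, and a single positive element $f(C_1)$ is produced whose normalized trace in the $n$-th coordinate is $1/(n+1)\le 1/2$, while a trace $\rho$ coming from the second $M_{2^\infty}$ summand of the quotient gives $\rho(f(C_1))=1$. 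The gap between $1$ and the uniform bound $1/2$ separates $\rho$ from the weak$^*$-closure of finite-dimensional traces, which is exactly what the cited criterion needs. If you want to repair your approach, you would need a comparable \emph{strict} separation (a trace value outside the closed range of finite-dimensional trace values), not merely a value that finite-dimensional traces fail to hit on the nose.
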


\begin{proof}
Suppose $\mathcal{A}$ and $\mathcal{B}$ are separable unital AF-C*-algebras.
Suppose $\mathcal{A}=C^{\ast}\left(  a_{1},a_{2},\ldots \right)  $ and
$\mathcal{B=}C^{\ast}\left(  b_{1},b_{2},\ldots \right)  $ with each $a_{n}$
and $b_{n}$ selfadjoint. We can assume that $\sigma \left(  a_{1}\right)
\subset \left[  0,1\right]  $ and $\sigma \left(  b_{1}\right)  \subset \left[
4,5\right]  $. Then we can find, for each $n\in \mathbb{N}$, a
finite-dimesnional C*-subalgebra $\mathcal{A}_{n}$ of $\mathcal{A}$ and
elements $a_{1,n},\ldots,a_{n,n}\in \mathcal{A}_{n}$ such that $\left \Vert
a_{k}-a_{k,n}\right \Vert <1/n$ for $1\leq k\leq n$. Similarly, we can find,
for each $n\in \mathbb{N}$ a finite-dimensional C*-subalgebra $\mathcal{B}_{n}$
of $\mathcal{B}$ and elements $b_{1,n},\ldots,b_{n,n}\in \mathcal{B}_{n}$ such
that $\left \Vert b_{k}-b_{k,n}\right \Vert <1/n$ for $1\leq k\leq n$. We can
also assume that $\sigma \left(  a_{1,n}\right)  \subset \left[  -1,2\right]  $
and $\sigma \left(  b_{1,n}\right)  \subset \left[  3,6\right]  $ for every
$n\in \mathbb{N}$. We can assume, for each $n\in \mathbb{N}$ that $\mathcal{A}%
_{n},\mathcal{B}_{n}\subset \mathcal{M}_{s_{n}}\left(  \mathbb{C}\right)  $
(unital embeddings).  For each $1\leq k\leq n<\infty$ define%
\[
c_{k,n}=a_{k,n}^{\left(  n\right)  }\oplus b_{k,n}\in \mathcal{M}_{\left(
n+1\right)  s_{n}}\left(  \mathbb{C}\right)  .
\]
Define $c_{k,n}=0$ when $1\leq n<k<\infty$.
Let $C_{k}=\sum_{n\in \mathbb{N}}^{\oplus}c_{k,n}\in%
{\displaystyle \prod \limits_{n\in \mathbb{N}}}
\mathcal{M}_{\left(  n+1\right)  s_{n}}\left(  \mathbb{C}\right)  $ and define
the C*-algebra $C$ as the C*-algebra generated by $C_{1},C_{2},\ldots$ and
$\mathcal{J}=\sum_{n\in \mathbb{N}}^{\oplus}\mathcal{M}_{\left(  n+1\right)
s_{n}}\left(  \mathbb{C}\right)  $. Clearly, $\mathcal{C}$ is RFD and
$$\mathcal{C}/\mathcal{J} \cong C^{\ast}\left(  a_{1}\oplus
b_{1},a_{2}\oplus b_{2},...\right)  \subseteq \mathcal{A}\oplus \mathcal{B}.$$
However, if $f:\mathbb{R}\rightarrow \mathbb{R}$ is continuous and $f=0$ on
$\left[  0,1\right]  $ and $f=1$ on $\left[  2,3\right]  $, we have $f\left(
a_{1}\oplus b_{1}\right)  =0\oplus1.$ Thus $0\oplus1 \in C^{\ast}\left(  a_{1}\oplus
b_{1},a_{2}\oplus b_{2},...\right)$ and hence $C^{\ast}\left(  a_{1}\oplus
b_{1},a_{2}\oplus b_{2},...\right)  =\mathcal{A}\oplus \mathcal{B}$. Since
$\mathcal{J}$ and $\mathcal{C}/\mathcal{J}$ are AF, $\mathcal{C}$ must be AF.
Now, to get an example we wanted,  suppose $\mathcal{A}=\mathcal{B}=\mathcal{M}_{2^{\infty}}$ with
trace $\tau$. Let $\pi=\pi_{1}\oplus \pi_{2}:\mathcal{C}\rightarrow
\mathcal{A}\oplus \mathcal{B}$ be the map whose kernel is $\mathcal{J}$. Then
$\rho=\tau \circ \pi_{2}$ is a tracial state on $\mathcal{C}$. Note that since $\mathcal{J}\subset \mathcal C$, the
only irreducible finite-dimensional representations of $\mathcal{C}$ are
(unitarily equivalent to) the coordinate projections onto $\mathcal{M}%
_{\left(  n+1\right)  s_{n}}\left(  \mathbb{C}\right)  $ and, for each of
these representations the trace of the image of $f\left(  C_{1}\right)
=\sum_{n\in \mathbb{N}}^{\oplus}0^{\left(  n\right)  }\oplus1$ is at most
$1/2.$ However, $\rho \left(  f\left(  C_{1}\right)  \right)  =1$. Thus $\rho$
is not a weak*-limit of finite-dimensional traces. By [\cite{TracialStability}, Th. 3.10] $\mathcal C$ is not matricially tracially stable.
\end{proof}

\end{document}